\documentclass[11pt]{article}

\usepackage{amsmath,amssymb,amsthm}
\usepackage{fullpage}
\usepackage{authblk}
\usepackage{graphicx}
\usepackage{algorithm}
\usepackage[noend]{algorithmic}

\title{Beyond Helly graphs: the diameter problem on absolute retracts}
\author[1,2]{Guillaume Ducoffe}
\affil[1]{\small National Institute for Research and Development in Informatics, Romania}
\affil[2]{\small University of Bucharest, Romania}
\date{}

\newtheorem{lemma}{Lemma}
\newtheorem{theorem}{Theorem}
\newtheorem{corollary}{Corollary}
\newtheorem{proposition}{Proposition}
\newtheorem{myclaim}{Claim}

\newcommand{\qedclaim}{\hfill $\diamond$ \medskip}
\newenvironment{proofclaim}{\noindent\ignorespaces{\em Proof.}}{\hfill\qedclaim\par\noindent} 

\begin{document}

\maketitle

\begin{abstract}
Characterizing the graph classes such that, on $n$-vertex $m$-edge graphs in the class, we can compute the diameter faster than in ${\cal O}(nm)$ time is an important research problem both in theory and in practice. 
We here make a new step in this direction, for some metrically defined graph classes. Specifically, a subgraph $H$ of a graph $G$ is called a retract of $G$ if it is the image of some idempotent endomorphism of $G$. Two necessary conditions for $H$ being a retract of $G$ is to have $H$ is an isometric and isochromatic subgraph of $G$. We say that $H$ is an absolute retract of some graph class ${\cal C}$ if it is a retract of any $G \in {\cal C}$ of which it is an isochromatic and isometric subgraph. In this paper, we study the complexity of computing the diameter within the absolute retracts of various hereditary graph classes. First, we show how to compute the diameter within absolute retracts of bipartite graphs in randomized $\tilde{\cal O}(m\sqrt{n})$ time. For the special case of chordal bipartite graphs, it can be improved to linear time, and the algorithm even computes all the eccentricities. Then, we generalize these results to the absolute retracts of $k$-chromatic graphs, for every fixed $k \geq 3$. Finally, we study the diameter problem within the absolute retracts of planar graphs and split graphs, respectively.    
\end{abstract}


\section{Introduction}\label{sec:intro}

One of the most basic graph properties is the diameter of a graph (maximum number of edges on a shortest path). It is a rough estimate of the maximum delay in order to send a message in a communication network~\cite{DeR94}, but it also got used in the literature for various other purposes~\cite{AJB99,WaS98}. The complexity of computing the diameter has received tremendous attention in the Graph Theory community~\cite{AVW16,BCT17,BHM20,Cab18,CDV02,CDEHV08,Che96,CDDH+01,CDP19,Dam16,Dra99,DuD19+,DHV19,DHV20,Duc19,FaP80,GKHM+18,Ola90}. Indeed, while this can be done in ${\cal O}(nm)$ time for any $n$-vertex $m$-edge graph, via a simple reduction to breadth-first search, breaking this quadratic barrier (in the size $n+m$ of the input) happens to be a challenging task. In fact, under plausible complexity assumptions such as the Strong Exponential-Time Hypothesis (SETH), the optimal running time for computing the diameter is essentially in ${\cal O}(nm)$ --- up to sub-polynomial factors~\cite{RoV13}. This negative result holds even if we restrict ourselves to bipartite graphs or split graphs~\cite{AVW16,BCH16}. However, on the positive side, several recent works have characterized important graph classes for which we can achieve for the diameter problem ${\cal O}(m^{2-\epsilon})$ time, or even better ${\cal O}(mn^{1-\epsilon})$ time, for some $\epsilon > 0$. Next, we focus on a few such classes that are most relevant to our work. Specifically, we call $G=(V,E)$ a Helly graph if every family of pairwise intersecting balls of $G$ (of arbitrary radius and center) have a nonempty common intersection. The Helly graphs are a broad generalization of many better-known graph classes, such as: trees, interval graphs, strongly chordal graphs and dually chordal graphs~\cite{BaC08}. Furthermore, a celebrated theorem in Metric Graph Theory is that every graph is an isometric (distance-preserving) subgraph of some Helly graph~\cite{Dre84,Isb64}. Other properties of Helly graphs were also thoroughly investigated in prior works~\cite{BaP89,BaP89b,BaP91,CLP00,Dra89,Dra93,DrB96,DrG19,LiS07,Pol01,Pol03}. In particular, as far as we are concerned here, there is a randomized $\tilde{\cal O}(m\sqrt{n})$-time algorithm in order to compute the diameter within $n$-vertex $m$-edge Helly graphs with high probability~\cite{DuD19+}.

Larger classes, related to the Helly graphs, have been considered recently. For instance, $G=(V,E)$ is a $k$-Helly graph if every family of $k$-wise intersecting balls of $G$ have a nonempty common intersection (Helly graphs are exactly the $2$-Helly graphs). For every fixed $k$, there is a randomized $\tilde{\cal O}(m\sqrt{n})$-time algorithm in order to compute the radius (minimum eccentricity of a vertex) within $k$-Helly graphs~\cite{Duc20}. The Helly-gap of $G=(V,E)$ is the least $\alpha$ such that, for every family of pairwise intersecting balls of $G$, if we increase all the radii by $\alpha$ then this family has a nonempty common intersection~\cite{CCGH+20,DrG020}. It also follows from~\cite{DuD19+} that the radius and the diameter of a graph with bounded Helly-gap can be approximated up to some additive constant, that only depends on its Helly-gap. The latter result generalizes prior work on diameter and center approximations within hyperbolic graph classes~\cite{CDEHV08}. Finally, the graphs of bounded ``distance VC-dimension'' were introduced in~\cite{CEV07}, where it was observed that, by a result from~\cite{Mat04}, they satisfy certain ``fractional'' Helly property. Many interesting graph classes have bounded distance VC-dimension, such as: proper minor-closed graph classes~\cite{CEV07}, interval graphs~\cite{DHV20} and bounded clique-width graphs~\cite{BoT15}. For all the aforementioned sub-classes, there exist algorithms in ${\cal O}(mn^{1-\epsilon})$ time, for some $\epsilon > 0$, in order to compute all the eccentricities, and so, the diameter~\cite{Duc20cw,DHV20,Ola90}. Partial results also have been obtained for the diameter problem on all graph classes of bounded distance VC-dimension~\cite{DHV20,DuD19+}.

Recall that an endomorphism of a graph $G$ is an edge-preserving mapping of $G$ to itself. A retraction is an idempotent endomorphism. If $H$ is the image of $G$ by some retraction (in particular, $H$ is a subgraph of $G$) then, we call $H$ a retract of $G$. The notion of retract has applications in some discrete facility location problems~\cite{Hel74}, and it is useful in characterizing some important graph classes. For instance, the median graphs are exactly the retracts of hypercubes~\cite{Ban84}. We here focus on the relation between retracts and Helly graphs, that is as follows. For some class ${\cal C}$ of reflexive graphs ({\it i.e.}, with a loop at every vertex), let us define the absolute retracts of ${\cal C}$ as those $H$ such that, whenever $H$ is an isometric subgraph of some $G \in {\cal C}$, $H$ is a retract of $G$. Absolute retracts find their root in Geometry, where they got studied for various metric spaces~\cite{Kli82}. In the special case of the class of all reflexive graphs, the absolute retracts are exactly the Helly (reflexive) graphs~\cite{HeR87}. Motivated by this characterization of Helly graphs, and the results obtained in~\cite{DuD19+} for the diameter problem on this graph class, we here consider the following notion of absolute retracts, for irreflexive graphs. -- Unless stated otherwise, all graphs considered in this paper are irreflexive. -- Namely, let us first recall that a subgraph $H$ of a graph $G$ is isochromatic if it has the same chromatic number as $G$. Then, given a class of (irreflexive) graphs ${\cal C}$, the absolute retracts of ${\cal C}$ are those $H$ such that, whenever $H$ is an isometric and isochromatic subgraph of some $G \in {\cal C}$, $H$ is a retract of $G$. We refer the reader to~\cite{BDS87,BFH93,BaP99,Hel74,Hel74a,Kla94,KlW18,Lot10,PeP85,Pes87,Pes88}, where this notion got studied for various graph classes.

\paragraph{Our results.} In this paper, we prove new structural and algorithmic properties of the absolute retracts of various hereditary graph classes, such as: bipartite graphs, $k$-chromatic graphs (for any $k \geq 3$), split graphs and planar graphs. Our focus is about the diameter problem on these graph classes but, on our way, we uncover several nice properties of the shortest-path distribution of their absolute retracts, that may be of independent interest. 
\begin{itemize}
\item First, in Sec.~\ref{sec:k=2}, we consider the absolute retracts of bipartite graphs and some important subclasses of the latter. We observe that in the square of such graph $G$, its two partite sets induce Helly graphs. This result complements the known relations between Helly graphs and absolute retracts of bipartite graphs~\cite{BFH93}. Then, we show how to compute the diameter of $G$ from the diameter of both Helly graphs (actually, from the knowledge of the peripheral vertices in these graphs, {\it i.e.}, those vertices with maximal eccentricity). Recently~\cite{DDG19+}, we announced an ${\cal O}(m\sqrt{n})$-time algorithm in order to compute all the eccentricities in a Helly graph. However, extending this result to the absolute retracts of bipartite graphs appears to be a more challenging task. We manage to do so for the subclass of chordal bipartite graphs, for which we achieve a linear-time algorithm in order to compute all the eccentricities. For that, we prove the stronger result that in the square of such graph, its two partite sets induce strongly chordal graphs. Here also, our result complements the known relations between both graph classes~\cite{Bra91,FaM83}. 
\item In Sec.~\ref{sec:k-chromatic}, we generalize our above framework to the absolute retracts of $k$-chromatic graphs, for any $k \geq 3$. Our proofs in this part are more technical and intricate than in Sec.~\ref{sec:k=2}. For instance, we cannot extract a Helly graph from each colour class anymore. Instead, we define a partial eccentricity function for each colour ({\it i.e.}, by restricting ourselves to the distances between vertices of the same colour), and we prove that the latter functions almost have the same properties as the eccentricity function of a Helly graph. 
\item Our positive results in Sec.~\ref{sec:k=2} and~\ref{sec:k-chromatic} rely on some Helly-type properties of the graph classes considered. However, our hardness result in Sec.~\ref{sec:split} hints that the weaker property of being an absolute retract of some well-structured graph class is not sufficient on its own for faster diameter computation. Specifically, we prove that under SETH, there is no ${\cal O}(mn^{1-\epsilon})$-time algorithm for the diameter problem, for any $\epsilon > 0$, on the class of absolute retracts of split graphs. This negative result follows from an elegant characterization of this subclass of split graphs in~\cite{Kla94}.
\item Finally, in Sec.~\ref{sec:planar}, we briefly consider the absolute retracts of planar graphs. While there now exist several truly subquadratic-time algorithms for the diameter problem on all planar graphs~\cite{Cab18,DHV20,GKHM+18} -- with the best-known running time being in $\tilde{\cal O}(n^{5/3})$ -- the existence of a quasi linear-time algorithm for this problem has remained so far elusive, and it is sometimes conjectured that no such algorithm exists~\cite{Cab18}. We give evidence that finding such algorithm for the absolute retracts of planar graphs is already a hard problem on its own. Specifically, we prove that every planar graph is an isometric subgraph of some absolute retract of planar graphs. This result mirrors the aforementioned property that every graph isometrically embeds in a Helly graph~\cite{Dre84,Isb64}. It implies the existence of some absolute retracts of planar graphs with treewidth arbitrarily large and inner vertices of degree three. Doing so, we rule out two general frameworks in order to compute the diameter in quasi linear time on some subclasses of planar graphs~\cite{CDV02,Epp02}.  
\end{itemize}
Let us mention that all graph classes considered here are polynomial-time recognizable. For the absolute retracts of $k$-chromatic graphs, the best-known recognition algorithms have superquadratic running-time (even for $k=2$)~\cite{BDS87,BaP99}. Fortunately, we do not need to execute these recognition algorithms before we can compute the diameter of these graphs. Indeed, our algorithms in Sec.~\ref{sec:k=2} and~\ref{sec:k-chromatic} are heuristics which can be applied to any graph. Sometimes, these algorithms may fail in outputting a value, that certifies the input graph is not an absolute retract of $k$-chromatic graphs, for some $k$. Conversely, if the input graph is an absolute retract of $k$-chromatic graphs, then  the algorithm always succeeds in outputting a value and this value is exactly the diameter. We consider our framework to be especially useful for all subclasses of the absolute retracts of $k$-chromatic graphs that admit quasi linear-time recognition algorithms. In this respect, we stress that the absolute retract of bipartite graphs are a superclass of cube-free modular graphs, and so, of chordal bipartite graphs, cube-free median graphs and covering graphs of modular lattices of breadth at most two~\cite{BDS87}. 

\paragraph{Notations.} We mostly follow the graph terminology from~\cite{BoM08,Die10}. All graphs considered are finite, simple ({\it i.e.}, without loops nor multiple edges), unweighted and connected. For a graph $G=(V,E)$, let the (open) neighbourhood of a vertex $v$ be defined as $N_G(v) = \{ u \in V \mid uv \in E \}$ and its closed neighbourhood as $N_G[v] = N_G(v) \cup \{v\}$. Similarly, for a vertex-subset $S \subseteq V$, let $N_G(S) = \bigcup_{v \in S} N_G(v) \setminus S$, and let $N_G[S] = N_G(S) \cup S$. The distance between two vertices $u,v \in V$ equals the minimum number of edges on a $uv$-path, and it is denoted $d_G(u,v)$. We also let $I_G(u,v)$ denote the vertices ``metrically'' between $u$ and $v$, {\it i.e.}, $I_G(u,v) = \{ w \in V \mid d_G(u,v) = d_G(u,w) + d_G(w,v) \}$. The ball of center $v$ and radius $r$ is defined as $N_G^r[v] = \{ u \in V \mid d_G(u,v) \leq r \}$. Furthermore, let the eccentricity of a vertex $v$ be defined as $e_G(v) = \max_{u \in V} d_G(u,v)$. The diameter and the radius of a graph $G$ are defined as $diam(G) = \max_{v \in V} e_G(v)$ and $rad(G) = \min_{v \in V} e_G(v)$, respectively. A vertex $v \in V$ is called central if $e_G(v) = rad(G)$, and peripheral if $e_G(v) = diam(G)$. Note that we sometimes omit the subscript if the graph $G$ is clear from the context. We introduce additional terminology where it is needed throughout the paper.  

\section{Bipartite graphs}\label{sec:k=2}

The study of the absolute retracts of bipartite graphs dates back from Hell~\cite{Hel72}, and since then many characterizations of this graph class were proposed~\cite{BDS87}. This section is devoted to the diameter problem on this graph class. In Sec.~\ref{sec:k2-diam}, we propose a randomized $\tilde{\cal O}(m\sqrt{n})$-time algorithm for this problem. Then, we consider the chordal bipartite graphs in Sec.~\ref{sec:chordal}, that have been proved in~\cite{BDS87} to be a subclass of the absolute retracts of bipartite graphs. For the chordal bipartite graphs, we present a deterministic linear-time algorithm in order to compute all the eccentricities.

Let us introduce a few additional terminology.
For a connected bipartite graph $G$, we denote its two partite sets by $V_0$ and $V_1$.
A half-ball is the intersection of a ball with one of the two partite sets of $G$.
Finally, for $i \in \{0,1\}$, let $H_i$ be the graph with vertex-set $V_i$ and an edge between every two vertices with a common neighbour in $G$.

\subsection{Faster diameter computation}\label{sec:k2-diam} 

We start with the following characterization of the absolute retracts of bipartite graphs:

\begin{theorem}[\cite{BDS87}]\label{thm:bipartite-retract}
$G=(V,E)$ is an absolute retract of bipartite graphs if and only if the collection of half-balls of $G$ satisfies the Helly property.
\end{theorem}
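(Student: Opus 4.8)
The plan is to prove both implications by exploiting a simplification that the statement invites: since $G$ is bipartite and connected, any two half-balls lying in \emph{different} partite sets are automatically disjoint, so any pairwise-intersecting family of half-balls lives entirely in a single $V_i$. Thus the Helly property in the statement is really a per-part condition, and in each part it is governed by the triangle inequality: two half-balls $N_G^{r}[u]\cap V_i$ and $N_G^{r'}[v]\cap V_i$ intersect precisely when there is a vertex of $V_i$ at distance $\leq r$ from $u$ and $\leq r'$ from $v$, which forces $d_G(u,v)\leq r+r'$ (with the matching parity). I would phrase everything in terms of this metric reformulation, and prove necessity by contraposition and sufficiency by an explicit retraction.

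For \textbf{necessity}, suppose the half-balls fail the Helly property: there is a pairwise-intersecting family $B_j=N_G^{r_j}[v_j]\cap V_i$, $1\leq j\leq t$, all in one part $V_i$, with $\bigcap_j B_j=\emptyset$. After discarding dominated half-balls we may assume $|r_j-r_k|\leq d_G(v_j,v_k)\leq r_j+r_k$ for all $j,k$, where the upper bound is exactly pairwise intersection. These inequalities are precisely the triangle conditions needed to attach a single new vertex $x$, placed in part $V_i$, at distance $d_H(x,v_j)=r_j$ from each $v_j$, while keeping $G$ isometric and $H$ bipartite; I would realize this by gluing shortest-path gadgets of the appropriate parities, a standard isometric one-point metric extension. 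Since $G$ is bipartite with an edge, $H$ is isochromatic. Now any retraction $\rho\colon H\to G$ fixes each $v_j$ and is a homomorphism, so $d_G(\rho(x),v_j)\leq d_H(x,v_j)=r_j$ and, by the bipartition, $\rho(x)\in V_i$; hence $\rho(x)\in\bigcap_j B_j=\emptyset$, a contradiction. Therefore $G$ admits an isometric isochromatic bipartite extension with no retraction, i.e.\ $G$ is not an absolute retract of bipartite graphs.

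For \textbf{sufficiency}, assume the half-balls of $G$ satisfy the Helly property and let $G$ be an isometric isochromatic subgraph of a bipartite graph $H$. I would build a retraction $r\colon H\to G$ directly. For $x\in V(H)$ in part $i$, consider the family $\{\,N_G^{d_H(x,v)}[v]\cap V_i \mid v\in V(G)\,\}$ of half-balls of $G$. Pairwise intersection follows from $d_G(u,v)=d_H(u,v)\leq d_H(x,u)+d_H(x,v)$: choosing a vertex $w$ of the correct parity on a $G$-geodesic between $u$ and $v$ yields $w\in V_i$ with $d_G(w,u)\leq d_H(x,u)$ and $d_G(w,v)\leq d_H(x,v)$. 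By the Helly property this family has a common vertex, which I take to be $r(x)$; the half-ball for $v=x$ (when $x\in V(G)$) is the singleton $\{x\}$, so $r$ fixes $G$ pointwise. It remains to check that $r$ is a homomorphism: for an edge $xy\in E(H)$, the vertices $r(x),r(y)$ lie in opposite parts, so $d_G(r(x),r(y))$ is odd and hence $\geq 1$, and one must show it equals $1$.

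The main obstacle is exactly this last adjacency verification, together with the analogous realizability check in the necessity gadget. The clean special case is a one-vertex extension $H=G+x$: there the $G$-neighbours of $x$ form a set $S$ in one part, the radius-one half-balls $\{N_G(s)\mid s\in S\}$ are pairwise intersecting (since any two neighbours of $x$ are at $G$-distance $2$ and thus share a neighbour), and Helly produces a single vertex $g$ dominating $S$, giving a fold $x\mapsto g$; this already shows where all radii of half-balls become necessary once one iterates folds through a multi-vertex $H$. For the global map above, I expect the adjacency to follow by upgrading $r$ to a \emph{nonexpansive} retraction, the bipartite analogue of the fact that Helly graphs are the absolute retracts of reflexive graphs via their injective hulls: one argues $d_G(r(x),r(y))\leq d_H(x,y)$ using the Helly property to make the choices of $r(x)$ and $r(y)$ mutually consistent, after which the odd-parity lower bound forces $d_G(r(x),r(y))=1$. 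Reducing the general extension to controllable single-vertex folds, and verifying nonexpansiveness cleanly, is where I anticipate the real work to lie.
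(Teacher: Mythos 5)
First, a point of comparison: the paper does not prove Theorem~\ref{thm:bipartite-retract} at all; it is imported verbatim from~\cite{BDS87}, so there is no in-paper argument to measure yours against. Judged on its own, your proposal has the right architecture (a one-point extension for necessity, a Helly-selected map for sufficiency, and the correct observation that a pairwise-intersecting family of half-balls lies in a single partite set), but both directions stop short of the actual work.

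For necessity, the step you describe as ``a standard isometric one-point metric extension'' is not standard in the graph setting and is where the content of this direction lives. You must realize the prescribed distances $d_H(x,v_j)=r_j$ by attaching paths, and then verify: (i) that no walk through the new vertices shortcuts a $G$-distance (this follows from $d_G(v_j,v_k)\leq r_j+r_k$); (ii) that $H$ is still bipartite, which forces $d_G(v_j,v_k)\equiv r_j+r_k\pmod 2$ for all $j,k$ --- otherwise the cycle running from $x$ down the path to $v_j$, through $G$ to $v_k$, and back up to $x$ is odd --- and hence requires you to first normalize each $r_j$ to the parity determined by the sides of $v_j$ and $V_i$ (which does not change the half-ball, but must be said); and (iii) that $d_H(x,v_j)$ equals $r_j$ exactly and is not shortened via some $v_k$, which is where your inequality $|r_j-r_k|\leq d_G(v_j,v_k)$ is used. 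None of this is fatal, but it is the proof, not a footnote.

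For sufficiency, the gap is more serious and you name it yourself: the map $r$ obtained by independently Helly-selecting, for each $x$, a point of $\bigcap_{v}\bigl(N_G^{d_H(x,v)}[v]\cap V_i\bigr)$ need not be edge-preserving, because the selections at adjacent $x$ and $y$ are made with no mutual constraint. As written, your argument establishes only that $r$ fixes $G$ pointwise and satisfies $d_G(r(x),v)\leq d_H(x,v)$ for all $v\in V(G)$, which is not yet a retraction. The standard repair is exactly the iterated-fold idea you gesture at in your last paragraph: process the vertices of $V(H)\setminus V(G)$ one at a time (say by nondecreasing distance to $G$) and, when choosing $r(x)$, enlarge the family by the unit half-balls $N_G[r(y)]\cap V_i$ for the already-processed neighbours $y$ of $x$, after checking that these still pairwise intersect each other and the rest of the family. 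Until that adjacency step is carried out, the sufficiency direction is incomplete.
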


This above Theorem~\ref{thm:bipartite-retract} leads us to the following simple, but important for what follows, observation about the internal structure of the absolute retracts of bipartite graphs:

\begin{lemma}\label{lem:half-helly}
If $G=(V_0 \cup V_1,E)$ is an absolute retract of bipartite graphs then both $H_0$ and $H_1$ are Helly graphs. 
\end{lemma}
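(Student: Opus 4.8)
The plan is to exhibit the balls of $H_i$ as half-balls of $G$, so that the Helly property of the half-ball collection --- guaranteed by Theorem~\ref{thm:bipartite-retract} --- transfers verbatim to $H_i$. The linchpin is the relation between the two metrics, which I would establish first: for all $u,v \in V_i$, one has $d_{H_i}(u,v) = d_G(u,v)/2$. For the upper bound, take a shortest $uv$-path in $G$; since $u$ and $v$ lie in the same partite set this path has even length $2k$, its vertices at even positions all belong to $V_i$, and consecutive such vertices share a neighbour in $G$, hence are adjacent in $H_i$. This yields a $uv$-walk of length $k$ in $H_i$. For the lower bound, observe that each edge of $H_i$ joins two vertices at distance exactly $2$ in $G$, so any $uv$-path of length $k$ in $H_i$ expands to a $uv$-walk of length $2k$ in $G$. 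The two bounds give the identity, and, as a byproduct, that $H_i$ is connected since $G$ is.

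Second, this identity shows that for every $v \in V_i$ and every integer $r \ge 0$,
\[
N_{H_i}^r[v] = \{ u \in V_i \mid d_G(u,v) \le 2r \} = N_G^{2r}[v] \cap V_i,
\]
which is precisely a half-ball of $G$; thus every ball of $H_i$ is a half-ball. Now let $\mathcal{B}$ be an arbitrary family of pairwise intersecting balls of $H_i$. Regarding each member as the corresponding half-ball of $G$, the family $\mathcal{B}$ becomes a pairwise intersecting subfamily of the collection of all half-balls of $G$. By Theorem~\ref{thm:bipartite-retract}, this collection satisfies the Helly property, so $\mathcal{B}$ admits a common point $p$; as all its members lie in $V_i$, we have $p \in V_i$. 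Reading the inclusions back through the identity, $p$ belongs to every ball of $\mathcal{B}$, so these balls have a common point in $H_i$. Since $\mathcal{B}$ was arbitrary, $H_i$ is a Helly graph, and by symmetry so is $H_{1-i}$.

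I do not anticipate a genuine obstacle: the entire argument hinges on the clean correspondence ``balls of $H_i$ $=$ half-balls of $G$ into $V_i$'' afforded by the halved metric. The one point deserving care is that Theorem~\ref{thm:bipartite-retract} supplies the Helly property for the \emph{full} half-ball collection (which mixes half-balls into $V_0$ and into $V_1$), so one must check that passing to the subfamily arising from $H_i$-balls is legitimate; this is immediate, because the defining implication ``pairwise intersecting $\Rightarrow$ common point'' is inherited by every subfamily of a family with the Helly property.
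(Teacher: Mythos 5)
Your proof is correct and follows essentially the same route as the paper: identify each ball of $H_i$ with the half-ball $N_G^{2r}[v]\cap V_i$ of $G$ and invoke the Helly property of the half-ball collection from Theorem~\ref{thm:bipartite-retract}, which passes to subfamilies. The paper states this identification in one line; you merely supply the supporting details (the halved-metric identity and the subfamily remark), which are accurate.
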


\begin{proof}
For each $i \in \{0,1\}$ the balls of $H_i$ are exactly the half-balls of $G$ that intersect $V_i$ and have as their center a vertex of $V_i$. Therefore, by Theorem~\ref{thm:bipartite-retract}, the collection of balls of $H_i$ satisfies the Helly property, {\it i.e.}, $H_i$ is a Helly graph. 
\end{proof}

Next, we prove that in order to compute $diam(G)$, with $G$ an absolute retract of bipartite graphs, it is sufficient to compute the peripheral vertices of the Helly graphs $H_0$ and $H_1$.

\begin{lemma}\label{lem:diam-bipartite}
If $G=(V_0 \cup V_1,E)$ is an absolute retract of bipartite graphs such that $diam(H_0) \leq diam(H_1)$ then, $diam(G) \in \{2diam(H_1),2diam(H_1)+1\}$. Moreover, if $diam(G) \geq 3$ then we have $diam(G) = 2diam(H_1)+1$ if and only if: 
\begin{itemize}
\item $diam(H_1) = 1$;
\item or $diam(H_0) = diam(H_1)$ and, for some $i \in \{0,1\}$, there exists a peripheral vertex of $H_i$ whose all neighbours in $G$ are peripheral vertices of $H_{1-i}$.   
\end{itemize}
\end{lemma}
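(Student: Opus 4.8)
The plan is to lean throughout on the elementary identity that for two vertices $x,y$ of the same partite set $V_i$ one has $d_G(x,y) = 2\,d_{H_i}(x,y)$: a common neighbour in $G$ is exactly an edge of $H_i$, and since $G$ is bipartite every same-part shortest walk alternates, so distance-$2$ steps in $G$ correspond to unit steps in $H_i$. Consequently the largest distance realized inside $V_i$ is $2\,diam(H_i)$. Writing $D := diam(H_1) \geq diam(H_0)$, this already gives $diam(G) \geq 2D$. For the matching upper bound I would take any cross pair $x \in V_0$, $y \in V_1$ and inspect the last edge $x'y$ of a shortest $xy$-path, where $x' \in V_0$: then $d_G(x,x') = d_G(x,y)-1$ is even, so $d_G(x,y) = 2\,d_{H_0}(x,x')+1 \leq 2\,diam(H_0)+1 \leq 2D+1$. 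Since same-part pairs are at distance at most $2D$, we get $diam(G) \in \{2D,\,2D+1\}$, the first assertion.

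For the characterization, assume first $diam(G)=2D+1$, witnessed (up to exchanging the two colours) by $w \in V_0$, $v \in V_1$ with $d_G(w,v)=2D+1$. Reading the two ends of a shortest $wv$-path as above, the $V_0$-neighbour $v'$ of $v$ satisfies $d_{H_0}(w,v')=D$, so $diam(H_0) \geq D$ and hence $diam(H_0)=diam(H_1)$; moreover $e_{H_0}(w)=D$, i.e.\ $w$ is peripheral in $H_0$. The key observation is that \emph{every} neighbour $x$ of $w$ in $G$ (necessarily in $V_1$) is peripheral in $H_1$: indeed $d_G(x,v) \geq d_G(w,v)-1 = 2D$, while $d_G(x,v) \leq 2\,diam(H_1)=2D$ because $x,v \in V_1$, so $d_G(x,v)=2D$, giving $d_{H_1}(x,v)=D$ and $e_{H_1}(x)=D$. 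Thus $w$ is a peripheral vertex of $H_0$ all of whose $G$-neighbours are peripheral in $H_1$, which is the second bullet with $i=0$. (This argument does not use $D \geq 2$; the case $D=1$ is singled out only for the converse.)

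Conversely, suppose $diam(G) \geq 3$ and one of the two bullets holds; I must exhibit a cross pair at distance $2D+1$. If $diam(H_1)=1$ then the range proved above forces $diam(G)=3=2D+1$, and we are done. So assume the second bullet, say with a peripheral vertex $w$ of $H_0$ whose $G$-neighbours $w'_1,\dots,w'_k$ are all peripheral in $H_1$, and assume for contradiction $diam(G)=2D$, which forces $D \geq 2$. Since all neighbours of $w$ lie in $V_1$, for every $z \in V_1$ we have $d_G(w,z) = 1 + \min_j d_G(w'_j,z) = 1 + 2\min_j d_{H_1}(w'_j,z)$. Hence $diam(G)=2D$ says exactly that no vertex of $V_1$ is simultaneously at distance $D$ from all of $w'_1,\dots,w'_k$ in $H_1$; that is, the clique $\{w'_1,\dots,w'_k\}$ of peripheral vertices of the Helly graph $H_1$ (Lemma~\ref{lem:half-helly}) admits no common antipode.

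The crux, and the step I expect to be hardest, is to contradict this last statement via the Helly property. The difficulty is intrinsic: ``being at distance $\geq D$ from $w'_j$'' is the complement of a ball, so ball-Helly does not apply directly, and the common-antipode conclusion genuinely fails for $D=1$ (e.g.\ when $H_1$ is complete), which is precisely why that case is treated apart. For $D \geq 2$ I would invoke the full strength of Theorem~\ref{thm:bipartite-retract} together with the $V_0$-side: as $w$ is peripheral in $H_0$, there is $u \in V_0$ with $d_G(w,u)=2D$, and then every neighbour $w'_j$ of $w$ satisfies $d_G(u,w'_j)=2D-1$ (the lower bound from $d_G(w,u)=2D$, the upper bound from $diam(G)=2D$ and $u,w'_j$ having different colours). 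Feeding this rigidity into a suitably chosen pairwise-intersecting family of half-balls centred at the antipodes of the $w'_j$ should, by the half-ball Helly property, produce a single vertex of $V_1$ at distance $2D+1$ from $w$, contradicting $diam(G)=2D$. Checking that the chosen family is pairwise intersecting, so that Theorem~\ref{thm:bipartite-retract} may be applied, is the delicate computation I would spend the most care on.
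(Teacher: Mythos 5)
Your first assertion and the forward direction are correct (the forward argument via a diametral cross pair $w\in V_0$, $v\in V_1$ is in fact a slightly slicker packaging than the paper's eccentricity computation), and the $diam(H_1)=1$ case of the converse is fine. The gap is in the only hard step, which you explicitly leave open: deriving a contradiction from ``$e_G(w)\le 2D$ while $w$ is peripheral in $H_0$ and all its neighbours $w'_1,\dots,w'_k$ are peripheral in $H_1$.'' Worse, the target you set yourself there --- producing a \emph{common antipode} of the $w'_j$, i.e.\ a vertex of $V_1$ at distance $\ge D$ from all of them --- is not the statement the Helly property can deliver, as you yourself observe: it lives in complements of balls. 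Your proposed fix (``half-balls centred at the antipodes of the $w'_j$'') does not obviously assemble into a pairwise-intersecting family whose common point would be far from $w$, and I do not see how to make it work.

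The paper closes this step by contradicting a \emph{different} consequence of the hypothesis: it shows that some neighbour of $w$ fails to be peripheral in $H_1$. Concretely, if $e_G(w)\le 2D$ then every $z\in V_1$ satisfies $d_G(w,z)\le 2D-1$, so the half-ball $N_G(w)=N_G^1[w]\cap V_1$ meets each half-ball $N_G^{2(D-1)}[z]\cap V_1$; and since $2(D-1)\ge D\ge d_{H_1}(z,z')$ for all $z,z'\in V_1$ (this is where $D\ge 2$ enters), the half-balls $N_G^{2(D-1)}[z]\cap V_1$ also pairwise intersect. Theorem~\ref{thm:bipartite-retract} then yields a single $u\in N_G(w)$ lying in all of them, i.e.\ $e_{H_1}(u)\le D-1$, contradicting that every neighbour of $w$ is peripheral in $H_1$. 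This is exactly the Helly-on-balls reformulation your sketch is missing; your own reduction ``no $z\in V_1$ is at distance $\ge D$ from all $w'_j$'' is the pairwise-intersection hypothesis of this family in disguise, and the Helly property upgrades it to the existence of one $w'_j$ close to \emph{all} of $V_1$, which is the actual contradiction. As written, the proposal does not constitute a proof of the converse direction.
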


\begin{proof}
Let $i \in \{0,1\}$. Clearly, $diam(G) \geq \max_{u,v \in V_i} d_G(u,v) = 2diam(H_i)$. Furthermore, since $G$ is connected, $V_i$ is a dominating set of $G$, and therefore, every vertex of $V_i$ has eccentricity at most $2diam(H_i)+1$. Overall, since we assume $diam(H_1) \geq diam(H_0)$, we get as desired $diam(G) \in \{2diam(H_1),2diam(H_1)+1\}$. Note that this result holds for any bipartite graph.

In what follows, we further assume $diam(G) \geq 3$. If $diam(H_1) \leq 1$ then, $diam(G) \leq 2diam(H_1) + 1 \leq 3$. Therefore, $diam(H_1) = 1$ and $diam(G) = 2 diam(H_1) + 1$. From now on, $diam(H_1) \geq 2$.

Let us first assume $diam(G) = 2diam(H_1)+1$. Observe that for each $i \in \{0,1\}$, every vertex of $V_i$ is at a distance at most $1 + 2diam(H_{1-i})$ from every vertex of $V_{1-i}$ (in order to see that, just take any neighbour of this vertex in $V_{1-i}$). In particular, if $diam(H_0) < diam(H_1)$, then every vertex of $V_0$ has eccentricity at most $2diam(H_0) + 1 < 2diam(H_1)$, while every vertex of $V_1$ has eccentricity at most $\max\{2diam(H_1),2diam(H_0)+1\} = 2diam(H_1)$. Hence, in order to have $diam(G) = 2diam(H_1)+1$, we must have $diam(H_0) = diam(H_1)$. Furthermore, for some fixed $i \in \{0,1\}$, let $v \in V_i$ be a peripheral vertex of $G$. Since $2diam(H_1)+1=e_G(v) \leq 2e_{H_i}(v) + 1$, we must have $e_{H_i}(v) = diam(H_i) = diam(H_1)$. In the same way, for any neighbour $u \in N_G(v)$, we have $e_G(v) \leq \max\{2diam(H_i),2e_{H_{1-i}}(u)+1\}$, and therefore, we must also have $e_{H_{1-i}}(u) = diam(H_{1-i}) = diam(H_1)$. It implies the existence of a peripheral vertex of $H_i$ whose all neighbours are peripheral vertices of $H_{1-i}$. 

Conversely, let us assume that $diam(H_0) = diam(H_1)$ and that, for some $i \in \{0,1\}$, there exists a $v \in V_i$ such that: $e_{H_i}(v) = diam(H_i) = diam(H_1)$; for every $u \in N_G(v)$, $e_{H_{1-i}}(u) = diam(H_{1-i}) = diam(H_1)$. Suppose by contradiction $e_G(v) < 2diam(H_1)+1$. Then, since $G$ is bipartite, all the vertices of $V_{1-i}$ are at a distance at most $2diam(H_1)-1$ from vertex $v$. Furthermore, for every $x,y \in V_{1-i}$, since we have $d_G(x,y) = 2d_{H_{1-i}}(x,y)$, we obtain that for every even $\ell \geq d_{H_{1-i}}(x,y)$ the half-balls $N^{\ell}_G[x] \cap V_{1-i}$ and $N^{\ell}_G[y] \cap V_{1-i}$ intersect. In particular, we may choose $\ell = 2(diam(H_1)-1)$ because $diam(H_1) \geq 2$ implies $2(diam(H_1)-1) \geq diam(H_1) \geq d_{H_{1-i}}(x,y)$. But then, the half-balls $N_G(v)$ ($=N_G^1[v] \cap V_{1-i}$) and $N_G^{2(diam(H_1)-1)}[w] \cap V_{1-i}$, for every $w \in V_{1-i}$, pairwise intersect. By Theorem~\ref{thm:bipartite-retract}, there exists a $u \in N_G(v)$ s.t. $e_{H_{1-i}}(u) \leq diam(H_1)-1$. This is a contradiction because such neighbour $u$ cannot be peripheral in $H_{1-i}$.    
\end{proof}

The remaining of Sec.~\ref{sec:k2-diam} is devoted to the computation of all the peripheral vertices in both Helly graphs $H_0$ and $H_1$. While there exists a truly subquadratic-time algorithm for computing the diameter of a Helly graph~\cite{DuD19+}, we observe that in general, we cannot compute $H_0$ and $H_1$ in truly subquadratic time from $G$. Next, we adapt~\cite[Theorem 2]{DuD19+}, for the Helly graphs, to our needs. 

\begin{lemma}\label{lem:cst-ecc}
If $G=(V_0 \cup V_1,E)$ is an absolute retract of bipartite graphs then, for any $k$, we can compute in ${\cal O}(km)$ time the set of vertices of eccentricity at most $k$ in $H_0$ (resp., in $H_1$).
\end{lemma}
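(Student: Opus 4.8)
The plan is to take the $O(km)$-time routine of~\cite{DuD19+} that computes, in a Helly graph, the set of vertices of eccentricity at most $k$, and to run it on $H_i$ while accessing $H_i$ only implicitly, through the graph $G$. The starting point is the exact correspondence between the two metrics. Since $G$ is connected and bipartite, any two vertices $u,v \in V_i$ lie at even $G$-distance, and a shortest $uv$-path in $G$ alternates between $V_i$ and $V_{1-i}$; hence consecutive $V_i$-vertices along it are adjacent in $H_i$, which gives $d_{H_i}(u,v) = d_G(u,v)/2$ and shows $H_i$ is connected. Consequently, for every $v \in V_i$ and every radius $r$, the ball $N^r_{H_i}[v]$ is exactly the half-ball $N^{2r}_G[v] \cap V_i$, and $e_{H_i}(v) \leq k$ if and only if $V_i \subseteq N^{2k}_G[v]$. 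By Lemma~\ref{lem:half-helly}, $H_i$ is a Helly graph, so the routine of~\cite{DuD19+} is correct on $H_i$; it only remains to control its running time.

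The obstruction to a direct application is that $H_i$ may have $\Theta(n^2)$ edges, so we cannot afford to build it, nor to run a breadth-first search inside it (which costs $\Theta(|E(H_i)|)$). We instead observe that the routine of~\cite{DuD19+} never inspects the adjacency list of the Helly graph: it interacts with it solely through a bounded number --- $O(k)$ --- of (single- or multi-source) breadth-first searches, interleaved with $O(m)$-time bookkeeping that queries only the distances and balls they produce. Each such search in $H_i$ can be emulated by one breadth-first search in $G$: from a source set $S \subseteq V_i$ we run a multi-source BFS in $G$ up to radius $2r$, then read off $d_{H_i}(s,\cdot) = d_G(s,\cdot)/2$ on $V_i$ and the union of balls $\bigcup_{s \in S} N^r_{H_i}[s] = \big(\bigcup_{s \in S} N^{2r}_G[s]\big) \cap V_i$. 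Each emulated search therefore costs $O(m)$, and $O(k)$ of them cost $O(km)$ overall, matching the claimed bound; restricting the BFS frontier to $V_i$ and halving distances adds only linear-time overhead.

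The main thing to verify --- and the crux of the argument --- is precisely that~\cite[Theorem~2]{DuD19+}, viewed as an oracle algorithm, is expressible using only these distance/ball primitives and the Helly property of balls, with $O(k)$ primitive calls. Granting this, the emulation is faithful: every intermediate object manipulated by the routine (intersections of balls of radius $k$, their ``centers'', the shrinking candidate sets) is a half-ball-defined subset of $V_i$ that we can maintain in $O(m)$ time per round using $G$. The Helly property that the routine relies upon is available by Lemma~\ref{lem:half-helly}, and finiteness of all distances follows from the connectivity of $H_i$ established above. The case of $H_1$ is identical, with the roles of $V_0$ and $V_1$ exchanged.
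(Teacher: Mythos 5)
Your setup is correct and coincides with the paper's: the metric identities $d_{H_i}(u,v)=d_G(u,v)/2$ and $N^r_{H_i}[v]=N^{2r}_G[v]\cap V_i$, the fact that $H_i$ is Helly (Lemma~\ref{lem:half-helly}), and the observation that $H_i$ cannot be built explicitly, so everything must be simulated inside $G$. But the heart of the proof is exactly the step you defer (``the main thing to verify''), and the black-box emulation you propose does not close it. The algorithm behind~\cite[Theorem~2]{DuD19+} is not a sequence of ${\cal O}(k)$ breadth-first searches interleaved with bookkeeping that only reads distances and \emph{unions} of balls: it maintains a partition of the vertex set into groups $A$ together with the \emph{intersections} $B_A=\bigcap_{v\in A}N^t[v]$, and at each round it must compute $N[B_A]$ for every group. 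A multi-source BFS from a set $S$ yields $\min_{s\in S}d(s,\cdot)$, i.e.\ unions of balls, and cannot produce these intersections; the only way they are obtained in ${\cal O}(m)$ per round is incrementally, via the Helly property (the intersection of the $t$-balls equals the neighbourhood of the intersection of the $(t-1)$-balls when the relevant balls pairwise intersect) together with the invariant that the sets $B_A$ are pairwise disjoint, so that $\sum_A|N[B_A]|={\cal O}(m)$.

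When you transport this to $G$, one round of the Helly-graph algorithm on $H_i$ corresponds to a \emph{two}-layer neighbourhood $N_G(N_G(B_A))\cap V_i$. The first-layer sets $N_G(B_A)\subseteq V_{1-i}$ need not be pairwise disjoint even when the $B_A$ are, so expanding them again can cost $\Omega(p\cdot m)$ with $p$ the number of groups --- the ${\cal O}(m)$-per-round accounting breaks precisely at the intermediate partite set. The paper's proof repairs this by running the greedy partition-merging step at \emph{every} layer, including the odd ones: it proves, via the Helly property of half-balls (Theorem~\ref{thm:bipartite-retract}), that $W^t_i = N_G(B_i^{t-1})$ equals $V_{t \bmod 2}$ intersected with the common $t$-balls of the group, and it re-establishes pairwise disjointness of these intermediate sets by a maximality argument before taking the next neighbourhood. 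That interleaved half-step argument is the actual content of the lemma, and it is absent from your proposal.
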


\begin{proof}
By symmetry, we only need to prove the result for $H_0$. Let $U = \{ v \in V_0 \mid e_{H_0}(v) \leq k \}$ be the set to be computed.
We consider the more general problem of computing, for any $t$, a partition ${\cal P}_t = (A^t_1,A^t_2,\ldots,A^t_{p_t})$ of $V_0$, in an arbitrary number $p_t$ of subsets, subject to the following constraints: 
\begin{itemize}
\item For every $1 \leq i \leq p_t$, let $C^t_i := \bigcap_{v \in A^t_i} N^t_{G}[v]$. Let $B_i^t := C^t_i \cap V_0$ if $t$ is even and let $B_i^t := C^t_i \cap V_1$ if $t$ is odd (for short, $B_i^t = C_i^t \cap V_{t \pmod 2}$). \emph{We impose the sets $B_i^t$ to be \underline{nonempty} and \underline{pairwise disjoint}}.
\end{itemize}
Indeed, under these two conditions above, we have $U \neq \emptyset$ if and only if, for any partition ${\cal P}_{2k}$ as described above, $p_{2k} = 1$. Furthermore if it is the case then $U = B^{2k}_1$. 

\smallskip
{\bf The algorithm.}
We construct the desired partition by induction over $t$. If $t = 0$ then, let $V_0 = \{v_1,v_2,\ldots,v_{p_0}\}$. We just set ${\cal P}_0 = (\{v_0\},\{v_1\},\ldots,\{v_{p_0}\})$ (each set is a singleton), and for every $1 \leq i \leq p_0$ let $B_i^0 = A_i^0 = \{v_i\}$. Else, we construct ${\cal P}_t$ from ${\cal P}_{t-1}$. Specifically, for every $1 \leq i \leq p_{t-1}$, we let $W_i^t := N_G(B_i^{t-1})$. Then, starting from $j:=0$ and ${\cal F} := {\cal P}_{t-1}$, we proceed as follows until we have ${\cal F} = \emptyset$. We pick a vertex $u$ s.t. $\#\{ i \mid A_i^{t-1} \in {\cal F}, \ u \in W_i^t \}$ is maximized. Then, we set $A_j^t := \bigcup \{ A_i^{t-1} \mid  A_i^{t-1} \in {\cal F}, \ u \in W_i^t \}$ and $B_j^t := \bigcap \{ W_i^t \mid A_i^{t-1} \in {\cal F}, \ u \in W_i^t \}$. We add the new subset $A_j^t$ to ${\cal P}_t$, we remove all the subsets $A_i^{t-1}, u \in W_i^t$ from ${\cal F}$, then we set $j := j+1$. 

\smallskip
{\bf Correctness.} The base case of our above induction is trivially correct. In order to prove correctness of our inductive step, we need the following two intermediate claims.
\begin{myclaim}
$W_i^t = V_{t \pmod 2} \cap \left( \bigcap_{v \in A^{t-1}_i} N^t_{G}[v] \right)$.
\end{myclaim}
\begin{proofclaim}
Recall that $B_i^{t-1} = V_{ t - 1 \pmod 2} \cap \left( \bigcap_{v \in A^{t-1}_i} N^{t-1}_{G}[v] \right)$, and that $W_i^t = N_G(B_i^{t-1})$.
Therefore by construction, $W_i^t \subseteq V_{t \pmod 2} \cap \left( \bigcap_{v \in A^{t-1}_i} N^t_{G}[v] \right)$. Conversely, let $w \in V_{t \pmod 2} \cap \left( \bigcap_{v \in A^{t-1}_i} N^t_{G}[v] \right)$ be arbitrary. Let $a \in A_i^{t-1}$ be arbitrary. There are two cases. If $a \neq w$ then, by considering any $x \in N_G(w) \cap I_G(a,w)$, we get that the half-balls $N_G(w)$ ($=N_G^1[w] \cap V_{t-1 \pmod 2}$) and $N_G^{t-1}[a] \cap V_{t-1 \pmod 2}$ intersect. Otherwise, $a = w$ and then, $t$ is even. Here also the half-balls $N_G(w)$ and $N_G^{t-1}[a] \cap V_{t-1 \pmod 2}$ ($= N_G^{t-1}[a] \cap V_1$) intersect because we have $N_G(w) = N_G(a) \subseteq N_G^{t-1}[a] \cap V_{t-1 \pmod 2}$. Overall, in all the cases, the half-balls $N_G(w)$ and $N_G^{t-1}[a] \cap V_{t-1 \pmod 2}$, for every $a \in A_i^{t-1}$, pairwise intersect. Then, by Theorem~\ref{thm:bipartite-retract}, vertex $w$ has a neighbour in $B_i^{t-1}$.  
\end{proofclaim}
It follows from this above claim that, for each subset $A_j^t$ created at step $t$, we have $B_j^t = V_{t \pmod 2} \cap \left( \bigcap_{v \in A^{t}_j} N^t_{G}[v] \right)$, as desired. Observe that all the subsets $B_j^t$ are nonempty since they at least contain the vertex $u \in V_{t \pmod 2}$ that is selected in order to create $A_j^t$. What now remains to prove is that all the subsets $B_j^t$ are pairwise disjoint. The latter easily follows from our next intermediate claim, namely: 
\begin{myclaim}\label{claim:k=2}
Let $u \in V_{t \pmod 2}$ be a vertex maximizing $\#\{ i \mid u \in W_i^t \}$. For every index $i'$ s.t. $u \notin W_{i'}^t$, we have $W_{i'}^t \cap \left( \bigcap_{u \in W_i^t} W_i^t \right) = \emptyset$. 
\end{myclaim}
\begin{proofclaim}
It directly follows from the maximality of $\#\{ i \mid u \in W_i^t \}$.
\end{proofclaim}
We are done applying this above claim at each creation of a new subset $A_j^t$. 

\smallskip
{\bf Complexity.} The base case of our induction requires ${\cal O}(n)$ time. Let us prove that each inductive step requires ${\cal O}(m)$ time. First, since by the hypothesis the sets $B_i^{t-1}$ are pairwise disjoint, we can compute the sets $W_i^t$ in total linear time.  Then, we create an array of $p_{t-1}$ lists, numbered from $1$ to $p_{t-1}$. For each vertex $u \in V_{t \pmod 2}$ s.t. $\{ i \mid u \in W_i^t \} \neq \emptyset$, we put it in the list numbered $\#\{ i \mid u \in W_i^t \}$. Since it only requires to scan the $W_i^t$'s once, it takes ${\cal O}(m+n)$ time. We scan the lists in decreasing order ({\it i.e.}, from $p_{t-1}$ downto $1$), going to the next list each time the current one is empty. When the current list is nonempty, we pick any vertex $u$ of this list in order to create the next subset $A_j^t$. Note that both subsets $A_j^t$ and $B_j^t$ can be computed in ${\cal O}(\sum \{|A_i^{t-1}| + |W_i^t| \mid A_i^{t-1} \in {\cal F}, \ u \in W_i^t \})$ time. Finally, we need to discard the vertices of $B_j^t$ from the list in which they are currently contained, while for every other $w \in \left( \bigcup \{ W_i^t \mid A_i^{t-1} \in {\cal F}, u \in W_i^t \} \right) \setminus B_j^t$ we need to update $\#\{ i' \mid A_{i'}^{t-1} \in {\cal F}, w \in W_{i'}^t \}$ and the corresponding list. If, for each vertex, we store a pointer to its position in the corresponding list, we can also do the latter in ${\cal O}(\sum \{|W_i^t| \mid A_i^{t-1} \in {\cal F}, \ u \in W_i^t \})$ time. Overall, since each set $A_i^{t-1}$ gets removed once from ${\cal F}$, the total running time for the inductive step is in ${\cal O}(m+n)$.  
\end{proof}

We use this above Lemma~\ref{lem:cst-ecc} when the respective diameters of $H_0$ and $H_1$ are in ${\cal O}(\sqrt{n})$. For larger values of diameters, we use a randomized procedure (Algorithm~\ref{alg:large-diam}).  

\begin{algorithm}
	\caption{Diameter computation in Helly graphs.}
	\label{alg:large-diam}
	\footnotesize
	
	\begin{algorithmic}[1]
		\REQUIRE{A Helly graph $H$ s.t. $diam(H) > 3k = \omega(\log{|V(H)|})$.}\\
		\STATE{Set $p = c \frac{\log{|V(H)|}}{k}$, for some sufficiently large constant $c$.}
		\STATE{Let $U(p)$ contain every $v \in V(H)$ independently with probability $p$.}
		\FORALL{$v \in V(H)$}
		\IF{$\forall u \in U(p), \ d_H(u,v) > k$}
		\STATE Set $\bar{e}(v) := 0$.
		\ELSE
		\STATE Set $\bar{e}(v) := \min \{ d_H(u,v) + e_H(u) \mid u \in U(p), \ d_H(u,v) \leq k\}$.
		\ENDIF
		\ENDFOR
	\end{algorithmic}
\end{algorithm}

\begin{lemma}[Theorem 3 in~\cite{DuD19+}]\label{lem:large-ecc}
With high probability, Algorithm~\ref{alg:large-diam} runs in $\tilde{\cal O}(|E(H)|\cdot |V(H)|/k)$ time, we have $diam(H) = \max_{v \in V(H)} \bar{e}(v)$, and the peripheral vertices of $H$ are exactly the vertices $v \in V(H)$ which maximize $\bar{e}(v)$.
\end{lemma}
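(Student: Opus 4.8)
The plan is to derive the statement from two purely structural facts about Helly graphs, a deterministic lower bound that holds for \emph{every} outcome of the sample, and a single union bound over the random choice of $U(p)$. The two structural facts I would establish first, both directly from the Helly property of balls, are: (i) $rad(H) = \lceil diam(H)/2 \rceil$, and in particular $H$ is never self-centered once $diam(H) \geq 2$; and (ii) an \emph{eccentricity-descent} property: from every vertex $v$ there is a shortest path $v = x_0, x_1, \ldots, x_L$ to a central vertex, with $L = e_H(v) - rad(H)$ and $e_H(x_i) = e_H(v) - i$. For (i) I would note that all balls of radius $\lceil diam/2 \rceil$ pairwise intersect, since any two vertices are at distance at most $diam \leq 2\lceil diam/2 \rceil$; by the Helly property they share a vertex $c$, whence $e_H(c) \leq \lceil diam/2 \rceil$, the reverse inequality being the general bound $diam \leq 2\,rad$. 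For (ii) the key step is that any $v$ with $e_H(v) > rad(H)$ has a neighbour $w$ with $e_H(w) = e_H(v) - 1$: the balls $\{N^1_G[v]\} \cup \{N^{e_H(v)-1}_G[u] \mid u \in V\}$ pairwise intersect (using $e_H(v) - 1 \geq rad \geq \lceil diam/2\rceil$ for the pairs not involving $v$), so by Helly they share a vertex $w \in N_G[v]$ with $e_H(w) \leq e_H(v)-1$; iterating yields the descent path. For a peripheral $v$ this gives $L = diam - rad = \lfloor diam/2 \rfloor \geq k$, since $diam > 3k$.

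Next I would record the deterministic inequality $\bar{e}(v) \geq e_H(v)$ whenever $\bar{e}(v) \neq 0$: taking a farthest vertex $z$ from $v$ and the minimiser $u$, we have $e_H(v) = d_H(v,z) \leq d_H(v,u) + d_H(u,z) \leq d_H(u,v) + e_H(u) = \bar{e}(v)$. This holds for every sample, and already gives $\max_v \bar{e}(v) \geq diam(H)$ as soon as some peripheral vertex has $\bar{e} \neq 0$. For the matching upper bound at far-from-centre vertices, I would use the descent path: if $L = e_H(v) - rad(H) \geq k$, the prefix $x_1, \ldots, x_k$ consists of $k$ distinct vertices within distance $k$ of $v$, each satisfying $d_H(x_i,v) + e_H(x_i) = i + (e_H(v) - i) = e_H(v)$; so if $U(p)$ meets $\{x_1,\ldots,x_k\}$ then $\bar{e}(v) = e_H(v)$, and for peripheral $v$ this is exactly $diam(H)$.

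The randomised part is then a standard union bound. With $p = c\log|V(H)|/k$, for any fixed set $S$ we have $\Pr[U(p) \cap S = \emptyset] \leq (1-p)^{|S|} \leq e^{-p|S|}$, which is at most $|V(H)|^{-c}$ once $|S| \geq k$. Taking $S$ to be the descent prefix of each vertex and applying a union bound over the at most $|V(H)|$ such sets shows that, with high probability, every vertex $v$ with $L \geq k$ gets $\bar{e}(v) = e_H(v)$; in particular every peripheral vertex gets $\bar{e}(v) = diam(H)$, so $\max_v \bar{e}(v) = diam(H)$. To conclude that the maximisers are \emph{exactly} the peripheral vertices, it remains to rule out any overshoot $\bar{e}(v) \geq diam(H)$ at a non-peripheral $v$. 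This reduces to showing that for every $v$ the set $G_v = \{ u \mid d_H(u,v) \leq k,\ d_H(u,v) + e_H(u) \leq diam(H) \}$ has at least $k$ elements, so that $U(p) \cap G_v \neq \emptyset$ with high probability; for $L \geq k$ the descent prefix already supplies them with value $e_H(v) < diam(H)$.

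The main obstacle is precisely the near-centre regime $L = e_H(v) - rad(H) < k$, where the descent path of $v$ is too short to furnish $k$ good vertices on its own. Here I would augment the descent path with central vertices: any central vertex $u$ has $e_H(u) = rad(H) = \lceil diam/2\rceil$, so if it lies within distance $k$ of $v$ then $d_H(u,v) + e_H(u) \leq k + rad(H) \leq diam(H)$, the last inequality being exactly $k \leq \lfloor diam/2\rfloor$, which is guaranteed by the slack $diam > 3k$. Producing $k$ such vertices within distance $k$ of $v$ is the delicate point, and is where I expect to lean hardest on the structure of the centre $C(H)$ of a Helly graph together with the identity $rad(H)=\lceil diam(H)/2\rceil$; the constant $3$ in the hypothesis $diam > 3k$ is tuned to make this count go through. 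Finally, for the complexity I would run one BFS from each sampled vertex, obtaining both its eccentricity and all its distances in ${\cal O}(|E(H)|)$ time, and update $\bar{e}(\cdot)$ only over the first $k$ BFS levels (a dominated cost); a Chernoff bound gives $|U(p)| = {\cal O}(p|V(H)|) = \tilde{\cal O}(|V(H)|/k)$ with high probability, for a total running time of $\tilde{\cal O}(|E(H)|\cdot|V(H)|/k)$.
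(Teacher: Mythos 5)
The paper does not prove this lemma at all --- it is imported verbatim as Theorem~3 of~\cite{DuD19+} --- so your attempt is being judged on its own merits rather than against an in-paper argument. Most of it is sound. The identity $rad(H)=\lceil diam(H)/2\rceil$ and the unit eccentricity descent are correct consequences of the Helly property (the descent is exactly the unimodality result the paper quotes as Lemma~\ref{lem:unimodal}); the deterministic inequality $\bar e(v)\ge e_H(v)$ whenever $\bar e(v)\ne 0$ is right; the treatment of every $v$ with $e_H(v)-rad(H)\ge k$ --- which covers all peripheral vertices, since $diam(H)-rad(H)=\lfloor diam(H)/2\rfloor> k$ --- is complete once you add the union bound; and the complexity accounting is fine.

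The genuine gap is the near-centre regime $e_H(v)-rad(H)<k$, which you flag but do not close, and the route you sketch for closing it would fail: you want $k$ \emph{central} vertices within distance $k$ of $v$, but the centre of a Helly graph can consist of a single vertex (a long path is Helly and has a one- or two-vertex centre), so no such set need exist. A second, smaller slip in the same place: your set $G_v$ must be defined with the strict inequality $d_H(u,v)+e_H(u)<diam(H)$, since a sampled $u$ achieving equality would let a non-peripheral $v$ tie the maximum and break the ``exactly the peripheral vertices'' claim. Fortunately the case closes without any reference to the centre. If $e_H(v)\le rad(H)+k-1$ then $diam(H)-e_H(v)=\lfloor diam(H)/2\rfloor-(e_H(v)-rad(H))\ge \lfloor diam(H)/2\rfloor-k+1>k/2$ because $diam(H)>3k$; hence every $u$ with $d_H(u,v)\le\lfloor k/4\rfloor$ satisfies $d_H(u,v)+e_H(u)\le 2d_H(u,v)+e_H(v)<diam(H)$. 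Since $e_H(v)\ge rad(H)>\lfloor k/4\rfloor$, a shortest path from $v$ to any farthest vertex already supplies $\lfloor k/4\rfloor$ distinct such vertices, and the same sampling estimate (with the constant $c$ enlarged by a factor of $4$) shows that $U(p)$ hits one of them with high probability, forcing $\bar e(v)<diam(H)$. With that substitution for your central-vertex step, the argument is complete.
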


We are now ready to prove the main result of this section, namely: 

\begin{theorem}\label{thm:diam-k=2}
If $G=(V_0 \cup V_1,E)$ is an absolute retract of bipartite graphs then, with high probability, we can compute $diam(G)$ in $\tilde{\cal O}(m\sqrt{n})$ time. 
\end{theorem}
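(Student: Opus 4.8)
The plan is to combine the structural reduction of Lemma~\ref{lem:diam-bipartite} with the two complementary eccentricity-computing routines (Lemmas~\ref{lem:cst-ecc} and~\ref{lem:large-ecc}), using a threshold on the diameter to decide which routine to call. By Lemma~\ref{lem:diam-bipartite}, once we know $\max\{diam(H_0),diam(H_1)\}$ together with enough information about the peripheral vertices of $H_0$ and $H_1$, we can read off $diam(G)$ in constant time (after the relevant queries). So the entire task reduces to: (i) computing $diam(H_0)$ and $diam(H_1)$, and (ii) identifying the peripheral vertices of each $H_i$ and testing the neighbourhood condition in the second bullet of Lemma~\ref{lem:diam-bipartite}. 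The crucial point, already flagged in the text, is that we must do all of this \emph{without ever constructing $H_0$ or $H_1$ explicitly}, since their edge sets can be of size $\Theta(n^2)$ even when $m$ is small.

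First I would handle the small-diameter regime. Fix a threshold around $\sqrt{n}$. If $diam(H_i) \le c\sqrt{n}$ for the constant hidden in the threshold, then by repeatedly invoking Lemma~\ref{lem:cst-ecc} (which computes the set $\{v : e_{H_i}(v)\le k\}$ in ${\cal O}(km)$ time directly from $G$) for increasing values of $k$, or better via a single doubling/binary search on $k$, I can locate $diam(H_i)$ and the exact set of peripheral vertices. Concretely, the set of peripheral vertices is the difference of the level sets at $k=diam(H_i)$ and $k=diam(H_i)-1$, both obtainable from Lemma~\ref{lem:cst-ecc}. Each call costs ${\cal O}(km) = {\cal O}(\sqrt{n}\,m)$, and with a binary search over $k\in\{0,\dots,O(\sqrt n)\}$ the total is $\tilde{\cal O}(m\sqrt{n})$.

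Second I would handle the large-diameter regime, $diam(H_i) = \Omega(\sqrt n)$. Here $H_i$ is a Helly graph by Lemma~\ref{lem:half-helly}, so Algorithm~\ref{alg:large-diam} applies; by Lemma~\ref{lem:large-ecc} it runs in $\tilde{\cal O}(|E(H_i)|\cdot|V(H_i)|/k)$ time and, with high probability, returns both $diam(H_i)$ and its peripheral vertices (exactly the maximizers of $\bar e$). Choosing $k = \Theta(\sqrt n)$ gives running time $\tilde{\cal O}(|E(H_i)|\cdot\sqrt n /n)$; since $|V(H_i)|\le n$ and $|E(H_i)| = {\cal O}(n^2)$, this is $\tilde{\cal O}(n^{3/2})=\tilde{\cal O}(m\sqrt n)$ in the worst case, and one can bound $|E(H_i)|$ more carefully against $m$ if needed. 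To decide which regime we are in without knowing the diameter in advance, I would first run Lemma~\ref{lem:cst-ecc} with $k$ equal to the threshold value: if the returned level set already certifies that all eccentricities are at most the threshold (i.e.\ $p_{2k}=1$ in the notation of that lemma), we are in the small regime and proceed as above; otherwise we switch to Algorithm~\ref{alg:large-diam}.

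The main obstacle is the interface between the Helly-graph primitive (Algorithm~\ref{alg:large-diam}, stated for an explicit Helly graph $H$) and the fact that we only have $G$, not $H_i$, in hand. I expect to argue that every operation Algorithm~\ref{alg:large-diam} performs on $H_i$ --- namely the BFS-style distance computations $d_{H_i}(u,v)$ and the eccentricities $e_{H_i}(u)$ for the sampled set $U(p)$ --- can be simulated on $G$ at a cost comparable to BFS in $G$, because a BFS in $H_i$ from a vertex $v\in V_i$ is exactly a BFS in $G$ restricted to the partite set $V_i$ (a step in $H_i$ is a path of length two through $V_{1-i}$ in $G$, so $d_{H_i}(u,v) = d_G(u,v)/2$). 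This lets me charge the per-source work to ${\cal O}(m)$ rather than ${\cal O}(|E(H_i)|)$, which is what makes the whole scheme fit in $\tilde{\cal O}(m\sqrt n)$. The remaining bookkeeping --- running both $H_0$ and $H_1$, taking $diam(H_1)=\max$, and then evaluating the constant-size case analysis of Lemma~\ref{lem:diam-bipartite}, including the ``all neighbours in $G$ are peripheral in $H_{1-i}$'' test which is a single scan over the adjacency lists of the peripheral vertices --- is routine and adds only ${\cal O}(m)$, so the stated bound follows.
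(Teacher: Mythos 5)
Your proposal is essentially the paper's own proof: the same two-regime split around a $\sqrt{n}$ threshold, Lemma~\ref{lem:cst-ecc} with a one-sided binary search in the small-diameter regime, Algorithm~\ref{alg:large-diam} (Lemma~\ref{lem:large-ecc}) in the large-diameter regime with the key observation that a BFS in $H_i$ is simulated by a BFS in $G$ restricted to $V_i$, and a final linear-time scan of the peripheral vertices' neighbourhoods to evaluate Lemma~\ref{lem:diam-bipartite}. The only differences are cosmetic (the paper selects the regime via a linear-time $2$-approximation of $diam(G)$ from one BFS, rather than a trial call to Lemma~\ref{lem:cst-ecc}) plus two small points you should fix: first, your intermediate bound $\tilde{\cal O}(|E(H_i)|\cdot\sqrt{n}/n)=\tilde{\cal O}(n^{3/2})$ is miscomputed ($|E(H_i)|\cdot|V(H_i)|/k$ with $|E(H_i)|=\Theta(n^2)$ and $k=\Theta(\sqrt n)$ is $\tilde{\cal O}(n^{5/2})$), so the BFS-simulation argument you give afterwards is not an optional refinement but the only route to $\tilde{\cal O}(m\sqrt n)$; second, the case analysis of Lemma~\ref{lem:diam-bipartite} is stated only for $diam(G)\geq 3$, and applying its first bullet to a complete bipartite graph ($diam(H_1)=1$, $diam(G)=2$) would wrongly output $3$, so you must dispose of $diam(G)\leq 2$ separately (a bipartite graph has diameter at most two iff it is complete bipartite, checkable in linear time).
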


\begin{proof}
We may assume $diam(G) \geq 3$. Indeed, a bipartite graph has diameter at most two if and only if it is complete bipartite.
First, we compute the peripheral vertices of $H_0$ and $H_1$.
For that, we start computing a $2$-approximation of $diam(G)$ in linear time ({\it e.g.}, by computing the eccentricity of an arbitrary vertex). Let $D$ be the resulting value. There are two cases.
\begin{itemize}
\item If $D < \sqrt{n}$ then, we compute the least $k$ s.t. all vertices of $H_0$ (resp., of $H_1$) have eccentricity at most $k$. For that, it is sufficient to perform a one-sided binary search where, at each step, we apply Lemma~\ref{lem:cst-ecc}. Note that this value $k$ computed is in fact $diam(H_0)$ (resp., $diam(H_1)$). In particular, the total running time is in $\tilde{\cal O}(mk) = \tilde{\cal O}(mD) = \tilde{\cal O}(m\sqrt{n})$. Then, in order to compute the peripheral vertices of $H_0$ (resp., of $H_1$), it is sufficient to apply Lemma~\ref{lem:cst-ecc} one more time in order to compute the vertices of eccentricity at most $k-1$.  
\item Otherwise, $D \geq \sqrt{n}$, and we apply Algorithm~\ref{alg:large-diam} to both $H_0$ and $H_1$. Since by Lemma~\ref{lem:half-helly}, both $H_0$ and $H_1$ are Helly graphs, we have by Lemma~\ref{lem:large-ecc} that the output of Algorithm~\ref{alg:large-diam} is correct with high probability. Furthermore, since any breadth-first search in either $H_0$ or $H_1$ can be simulated with a breadth-first search in $G$, the total running time for executing Algorithm~\ref{alg:large-diam} is with high probability (by Lemma~\ref{lem:large-ecc}) in $\tilde{\cal O}(mn/D) = \tilde{\cal O}(m\sqrt{n})$.    
\end{itemize} 
Finally, in order to compute $diam(G)$ from the peripheral vertices of $H_0$ and $H_1$, we apply the criterion of Lemma~\ref{lem:diam-bipartite}. For that, it is sufficient to scan the neighbourhood of each peripheral vertex of $H_0$ and $H_1$, and therefore it can be done in linear time. 
\end{proof}

\subsection{Chordal bipartite graphs}\label{sec:chordal}

We improve Theorem~\ref{thm:diam-k=2} for the special case of chordal bipartite graphs.
Recall (amongst many characterizations) that a bipartite graph is chordal bipartite if and only if every induced cycle has length four~\cite{GoG78}. It was proved in~\cite{BDS87} that every chordal bipartite graph is an absolute retract of bipartite graphs.

\begin{theorem}\label{thm:chordal}
If $G=(V,E)$ is chordal bipartite then we can compute all the eccentricities (and so, the diameter) in linear time. 
\end{theorem}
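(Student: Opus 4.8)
The plan is to mirror the strategy of the bipartite case (Theorems~\ref{thm:bipartite-retract}--\ref{thm:diam-k=2}) but upgrade every Helly-type ingredient to its \emph{strongly chordal} counterpart, since chordal bipartite graphs form a subclass of the absolute retracts of bipartite graphs. The key structural claim to establish first is the one advertised in the introduction: if $G=(V_0 \cup V_1,E)$ is chordal bipartite, then each auxiliary graph $H_i$ (vertices $V_i$, edges between vertices at distance $2$ in $G$) is not merely Helly, as guaranteed by Lemma~\ref{lem:half-helly}, but in fact \emph{strongly chordal}. I would prove this by showing that the half-balls of $G$ that are centered in $V_i$ and meet $V_i$ form a family with the \emph{strong Helly property} (equivalently, that $H_i$ admits a strong elimination ordering). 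Concretely, from a doubly lexical ordering / $\Gamma$-free neighbourhood structure of the bipartite adjacency matrix of $G$ one extracts a simple vertex elimination scheme on each $V_i$, and the forbidden induced subgraphs of strongly chordal graphs (suns) pull back to induced cycles of length $\geq 6$ or other forbidden configurations in $G$, which are excluded by chordal bipartiteness.

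Granting that each $H_i$ is strongly chordal, the second step is algorithmic: strongly chordal graphs admit linear-time computation of \emph{all} eccentricities. I would invoke (or reprove) the fact that on a strongly chordal graph, given a strong elimination ordering, one can compute every $e_{H_i}(v)$ in $\mathcal{O}(|V_i| + |E(H_i)|)$ time. The delicate point is that we must \emph{not} build $H_i$ explicitly, since $H_i$ can be dense and is not computable in subquadratic time from $G$; so I would argue that a breadth-first search in $H_i$ is simulated by a breadth-first search in $G$ (even levels only), exactly as in the proof of Theorem~\ref{thm:diam-k=2}, and that the strong elimination ordering of $H_i$ can itself be read off from a suitable elimination ordering of $G$ in linear time. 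This keeps everything within $\mathcal{O}(m)$ total.

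The third step is to recover \emph{all} eccentricities of $G$, not just $diam(G)$. Here Lemma~\ref{lem:diam-bipartite} only gives the diameter, so I would prove a per-vertex refinement: for $v \in V_i$, the parity structure of $G$ gives $e_G(v) \in \{2e_{H_i}(v),\, 2e_{H_i}(v)+1\}$, with the $+1$ case governed by whether some vertex of $V_{1-i}$ is ``too far'' from all neighbours of $v$. Using Theorem~\ref{thm:bipartite-retract} (half-ball Helly property) as in the convexity argument at the end of Lemma~\ref{lem:diam-bipartite}, I expect the correction term to be decidable by checking, for each $v$, whether $N_G(v)$ together with the half-balls realizing $e_{H_{1-i}}$ have empty common intersection — a condition that should reduce to comparing $e_{H_i}(v)$ against the eccentricities $e_{H_{1-i}}(u)$ of the neighbours $u \in N_G(v)$. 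Aggregating this over all $v$ must be done in linear time, which is plausible because each vertex's neighbourhood is scanned once.

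The main obstacle, I expect, is the first step: proving that each $H_i$ is strongly chordal rather than merely Helly. Helly-ness of half-balls is an immediate consequence of Theorem~\ref{thm:bipartite-retract}, but strong chordality is a strictly stronger metric condition (no induced suns), and translating the absence of induced $6^+$-cycles in $G$ into a strong elimination ordering of the distance-$2$ graph $H_i$ requires a careful combinatorial argument — very likely through doubly lexical orderings of the biadjacency matrix and the $\Gamma$-free characterization of chordal bipartite graphs. The secondary obstacle is purely implementational: ensuring that the strong elimination ordering and the eccentricity sweep for $H_i$ are both obtained in linear time \emph{without} materializing the edge set of $H_i$, relying instead on the BFS-simulation through $G$.
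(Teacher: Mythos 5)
Your overall architecture matches the paper's: show that $H_0$ and $H_1$ are strongly chordal, compute all their eccentricities without materializing their edge sets, then lift to eccentricities of $G$ via a per-vertex refinement of Lemma~\ref{lem:diam-bipartite}. However, there is a genuine gap in your third step, and it is precisely where the bulk of the paper's work lies. You claim that the correction term deciding between $e_G(v)=2e_{H_i}(v)$ and $2e_{H_i}(v)+1$ ``should reduce to comparing $e_{H_i}(v)$ against the eccentricities $e_{H_{1-i}}(u)$ of the neighbours $u\in N_G(v)$.'' This is true only when $e_{H_i}(v)\geq rad(H_{1-i})+1$: the Helly argument you are invoking requires the half-balls $N_G^{2e_{H_i}(v)-2}[x]\cap V_{1-i}$, for all $x\in V_{1-i}$, to pairwise intersect, which needs $2e_{H_i}(v)-2\geq diam(H_{1-i})$. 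In the borderline case $e_{H_i}(v)=rad(H_{1-i})$ this fails, and the correct criterion (the paper's Lemma~\ref{lem:ecc}, second item) is that $N_G(v)\subseteq C(H_{1-i})$ \emph{and} every $u\in V_{1-i}$ satisfies $d_{H_{1-i}}(u,N_G(v))\leq rad(H_{1-i})-1$. The latter is a condition on the distance from $u$ to the \emph{set} $N_G(v)$, not on individual eccentricities of neighbours: all neighbours of $v$ can be central and yet some $u$ can be at distance exactly $rad(H_{1-i})$ from every one of them. Handling this case in total linear time is nontrivial; the paper builds a witness set $W$ of gates $u^*\in N_{H_1}^{r-2}[u]\cap\bigl(\bigcap\{N_{H_1}[x]\mid x\in N_{H_1}[c]\cap N_{H_1}^{r-1}[u]\}\bigr)$ (whose existence again rests on Helly and slice/gatedness arguments in strongly chordal graphs) and reduces the test to ``$d_G(v,w)\leq 3$ for all $w\in W$,'' solved by the partition-refinement technique of Lemma~\ref{lem:cst-ecc}. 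Without something of this kind your reduction does not close.

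Two further remarks. First, your step 2 is under-specified: ``all eccentricities of a strongly chordal graph in linear time from a strong elimination ordering, with BFS simulated through $G$'' glosses over the fact that the known eccentricity algorithms need more than single-source BFS (center computation, local search in $N_H^2[c]$, gate computations), and running these without the adjacency lists of $H_i$ is exactly why the paper switches to a clique-tree representation of $H_i$ (obtained in linear time from the dual-hypertree structure of the neighbourhood hypergraphs) and reworks each subroutine to run in ${\cal O}(w(T))$ time. Second, your step 1 takes a different but legitimate route: the paper derives strong chordality of $H_i$ from the Dragan--Voloshin maximum-neighbourhood-ordering characterization and the ``hereditarily dually chordal'' characterization of strongly chordal graphs, rather than from $\Gamma$-free doubly lexical orderings; your route is plausible but you would still need to turn it into the linear-time representation that the algorithmic steps require. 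You also identify step 1 as the main obstacle, whereas in fact it follows quickly from the cited results; the real difficulty is the algorithmic treatment of the center and of the borderline eccentricity case described above.
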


The remaining of this section is devoted to a proof of Theorem~\ref{thm:chordal}. For that, we subdivide our proof into four main steps (undefined terminology below is introduced step by step in this section):
\begin{enumerate}
\item We base ourselves on the results from~\cite{BDCV98,DraV96} in order to prove that $H_0$ and $H_1$ -- the two Helly graphs induced by the partite sets of $G$ in its square -- are strongly chordal graphs.
\item The same as in Sec.~\ref{sec:k2-diam}, in general we cannot compute $H_0$ and $H_1$ from $G$ in subquadratic time. In order to overcome this issue, we explain how to compute a clique-tree for these two graphs. 
\item Then, we present an algorithm in order to compute all the eccentricities for strongly chordal graphs, being given as input the clique-tree of such graph ({\it i.e.}, instead of its adjacency list).
\item Finally, we present an ``all eccentricities'' version of Lemma~\ref{lem:diam-bipartite}, and we explain how to solve the corresponding algorithmic problem for chordal bipartite graphs.  
\end{enumerate}

\paragraph{The chordal structure of the partite sets.} A graph is chordal if it has no induced cycle of length more than three. It is strongly chordal if it is chordal and it does not contain any $n$-sun ($n \geq 3$) as an induced subgraph~\cite{FaM83}. A dually chordal graph is a Helly graph in which the intersection graph of balls is chordal~\cite{BaC08,BDCV98}; for other characterizations of this graph class, see~\cite{BDCV98}. The relation between dually chordal graphs and strongly chordal graphs is as follows:

\begin{lemma}[\cite{BDCV98}]\label{lem:strongly-chordal}
A graph is strongly chordal if and only if each induced subgraph is dually chordal.
\end{lemma}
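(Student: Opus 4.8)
The plan is to deduce the equivalence from two classical elimination-ordering characterizations, both of which are available in~\cite{BDCV98}. Recall that a vertex $v$ of a graph $H$ is \emph{simple} if the closed neighbourhoods $\{N_H[w] \mid w \in N_H[v]\}$ are linearly ordered by inclusion, and that a vertex $u \in N_H[v]$ is a \emph{maximum neighbour} of $v$ if $N_H[w] \subseteq N_H[u]$ for every $w \in N_H[v]$. A \emph{simple elimination ordering} of $H$ is an ordering $v_1, v_2, \ldots, v_n$ of its vertices such that, for every $i$, the vertex $v_i$ is simple in the subgraph induced by $\{v_i, v_{i+1}, \ldots, v_n\}$; a \emph{maximum neighbourhood ordering} is defined analogously, requiring instead that each $v_i$ has a maximum neighbour in the same induced subgraph. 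The two facts I would take as starting points are that $H$ is strongly chordal if and only if it admits a simple elimination ordering, and that $H$ is dually chordal if and only if it admits a maximum neighbourhood ordering.

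Given these, the forward implication is short. Strong chordality is characterized by forbidden induced subgraphs (no induced cycle of length at least four and no induced $n$-sun), hence it is a hereditary property: every induced subgraph of a strongly chordal graph is again strongly chordal. It therefore suffices to check that a strongly chordal graph is dually chordal. But if $v$ is a simple vertex, then the chain $\{N[w] \mid w \in N[v]\}$ has a greatest element $N[u]$, and this $u$ is by definition a maximum neighbour of $v$. Consequently a simple elimination ordering is in particular a maximum neighbourhood ordering, so strongly chordal implies dually chordal; applying this to every induced subgraph gives the forward direction.

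For the converse I would argue by contraposition, and here the key point is that dual chordality is \emph{not} hereditary, so one cannot simply invert the previous paragraph. Instead, note that if a graph possesses no vertex at all having a maximum neighbour, then it can have no maximum neighbourhood ordering (the first eliminated vertex would already need one) and is thus not dually chordal. Assume $G$ is not strongly chordal; then it contains, as an induced subgraph $F$, either a cycle $C_k$ with $k \geq 4$ or an $n$-sun with $n \geq 3$. I would verify directly that no vertex of such $F$ has a maximum neighbour: on a cycle, the two neighbours of any $v$ have closed neighbourhoods reaching vertices at distance two on either side, which no member of $N[v]$ can simultaneously dominate; on an $n$-sun, for a degree-two rim vertex $w$ adjacent to hub-vertices $u,u'$, the union $N[u] \cup N[u']$ already contains three distinct rim vertices, so is dominated by no vertex of $N[w]$, while for a hub vertex the neighbourhoods of all hub vertices cover the whole sun, which no single vertex dominates. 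Hence $F$ is not dually chordal, so not every induced subgraph of $G$ is dually chordal, which is the contrapositive of the claim.

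The main obstacle, and the only genuinely computational part, is this converse: because dual chordality fails to be hereditary, one really must exhibit inside each forbidden configuration a concrete certificate of non-dual-chordality, rather than invoking a closure property. I expect the $n$-sun case to require the most care, since the absence of a maximum neighbour must be checked both for the degree-two rim vertices and for the hub clique, uniformly in $n$; the cycle case reduces to a small computation once $k = 4$ (where the obstruction is the absence of a universal vertex) is separated from $k \geq 5$ (where it is a distance-two argument).
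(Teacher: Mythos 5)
Your proposal is correct, but note that the paper does not prove this statement at all: it is imported verbatim as a known result from the cited reference, so there is no internal proof to compare against. What you have written is a self-contained derivation from the two elimination-ordering characterizations (Farber's ``simple elimination ordering'' for strongly chordal graphs and the ``maximum neighbourhood ordering'' for dually chordal graphs), which is essentially the route taken in the cited literature itself. Your forward direction (hereditariness of strong chordality plus the observation that the top of the linearly ordered chain of closed neighbourhoods of a simple vertex is a maximum neighbour) is clean, and you are right to flag that the converse cannot be obtained by a closure argument since dual chordality is not hereditary; exhibiting the explicit certificates of non-dual-chordality inside each $C_k$ ($k \geq 4$) and each $n$-sun is the correct fix, and your verifications check out. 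Two small points of care: first, the paper elsewhere defines ``maximum neighbour'' via \emph{open} neighbourhoods (for the bipartite setting), whereas the dually chordal convention you use is the closed-neighbourhood one -- you use the right one, but it is worth saying so explicitly since the paper's footnote warns of the clash; second, your hub-vertex argument ``the neighbourhoods of all hub vertices cover the whole sun'' implicitly assumes the hub is a clique (a complete sun), so that all hubs lie in $N[u_i]$ -- for an incomplete sun you should restrict to the neighbourhoods of the vertices actually in $N[u_i]$, which still fails to be dominated by any single vertex, or simply invoke the standard fact that a chordal, non-strongly-chordal graph contains an induced \emph{complete} sun.
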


The $k^{th}$-iterated neighbourhood of a vertex $V$, denoted $N^k_G(v)$, is defined recursively as: $N_G^1(v) = N_G(v)$ (open neighbourhood) and $N_G^{k+1}(v) = N_G(N_G^k(v))$~\cite{DraV96}. The following observation was used implicitly in Lemma~\ref{lem:cst-ecc}:

\begin{lemma}\label{lem:iterated-ball}
If $G=(V_0\cup V_1,E)$ is bipartite then, for every $k \geq 1$ and $v \in V_0$, $N_G^{2k}(v) = N_{H_0}^k[v]$.
\end{lemma}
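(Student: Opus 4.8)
The plan is to prove the identity $N_G^{2k}(v) = N_{H_0}^k[v]$ by induction on $k$, unwinding the definitions of the iterated neighbourhood on both sides. The key structural fact is the definition of $H_0$: two vertices of $V_0$ are adjacent in $H_0$ precisely when they have a common neighbour in $G$. Equivalently, $w \in N_{H_0}[u]$ if and only if $d_G(u,w) \leq 2$, i.e. $N_{H_0}[u] = N_G^2(u) \cap V_0$ together with $u$ itself; note that since $G$ is bipartite, $N_G^2(u) \subseteq V_0$ for $u \in V_0$ already contains $u$ (any vertex has a neighbour, and a neighbour of a neighbour can be $u$), so in fact $N_G^2(u) = N_{H_0}[u]$ for $v \in V_0$. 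This is exactly the base case $k=1$, and it is worth recording cleanly before the induction.

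First I would establish the base case. For $v \in V_0$, the second iterated neighbourhood $N_G^2(v) = N_G(N_G(v))$ consists of all vertices reachable by a walk of length exactly two from $v$; since $G$ is bipartite these all lie in $V_0$, and they are precisely $v$ together with the vertices of $V_0$ sharing a common neighbour with $v$, which is the definition of $N_{H_0}[v]$. Hence $N_G^2(v) = N_{H_0}[v] = N_{H_0}^1[v]$.

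For the inductive step, assume $N_G^{2k}(v) = N_{H_0}^k[v]$ for some $k \geq 1$. I would write $N_G^{2k+2}(v) = N_G^2(N_G^{2k}(v)) = \bigcup_{u \in N_G^{2k}(v)} N_G^2(u)$, using that the iterated neighbourhood of a set is the union of the iterated neighbourhoods of its members and that $N_G^2 \circ N_G^{2k} = N_G^{2(k+1)}$. Since every $u \in N_G^{2k}(v)$ lies in $V_0$, the base-case identity gives $N_G^2(u) = N_{H_0}[u]$, so $N_G^{2k+2}(v) = \bigcup_{u \in N_{H_0}^k[v]} N_{H_0}[u] = N_{H_0}^{k+1}[v]$, where the last equality is the standard fact that the closed $(k+1)$-ball in $H_0$ is the union of the closed $1$-balls centred at vertices of the closed $k$-ball. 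This closes the induction.

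The main thing to be careful about, rather than a deep obstacle, is the bookkeeping of open versus closed neighbourhoods: the left-hand side uses open iterated neighbourhoods $N_G^k(\cdot)$, whereas the right-hand side is a closed ball $N_{H_0}^k[\cdot]$. The reconciliation is exactly the observation that, because $G$ is bipartite and every vertex has at least one neighbour, the open set $N_G^2(u)$ automatically contains $u$, so it already behaves as a closed object at the level of $H_0$; making this explicit in the base case is what makes the union in the inductive step match $N_{H_0}^{k+1}[v]$ without any off-by-one discrepancy. I would state this remark once and then the induction goes through mechanically.
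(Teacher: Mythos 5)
Your proof is correct and follows essentially the same route as the paper: a routine induction on $k$ exploiting that, by bipartiteness, the open iterated neighbourhoods alternate between the partite sets and hence coincide with half-balls (so that the set-subtractions in the definition of $N_G(\cdot)$ are vacuous and $v$ itself is automatically recaptured at each even step). The only cosmetic difference is that the paper's inductive invariant is stated as $N_G^{2k}(v)=N_G^{2k}[v]\cap V_0$ (with the conversion to $N_{H_0}^k[v]$ done once at the end), whereas you induct directly on the $H_0$-ball identity in steps of two; the content is identical.
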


\begin{proof}
By induction, for every $k \geq 1$, $N_G^{2k-1}(v) = N_G^{2k-1}[v] \cap V_1$ and $N_G^{2k}(v) = N_G^{2k}[v] \cap V_0$. The lemma follows since we have $N_{H_0}^k[v] = N_G^{2k}[v] \cap V_0$. 
\end{proof}

For a vertex $v$ in $G$, a maximum neighbour\footnote{This terminology is sometimes used, with a different meaning, for dually chordal graphs~\cite{BDCV98}.} is a vertex $u \in N_G(v)$ such that, for any other $w \in N_G(v)$, we have $N_G(w) \subseteq N_G(u)$. A vertex $v' \neq v$ such that $N_G(v) \subseteq N_G(v')$ is said to cover $v$. Finally a maximum neighbourhood ordering of $G$ is a total ordering $V = (v_1,v_2,\ldots,v_n)$ of its vertex-set such that, for every $1 \leq i \leq n-2$, the vertex $v_i$ has a maximum neighbour and is covered in the induced subgraph $G_i := G \setminus \{v_1,v_2,\ldots,v_{i-1}\}$. We do not use the existence of a maximum neighbourhood ordering directly in our proofs, but rather the following two related results:

\begin{lemma}[\cite{DraV96}]\label{lem:min}
$G=(V,E)$ is chordal bipartite if and only if every induced subgraph of $G$ has a maximum neighbourhood ordering.
\end{lemma}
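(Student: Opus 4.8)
The plan is to prove both implications. Call a vertex $v$ \emph{eliminable} if it has a maximum neighbour and is covered; by the definition recalled just above, a total order $(v_1,\dots,v_n)$ is a maximum neighbourhood ordering precisely when $v_i$ is eliminable in $G_i = G\setminus\{v_1,\dots,v_{i-1}\}$ for every $i\le n-2$. Since the class of chordal bipartite graphs is hereditary, and since every induced subgraph of a chordal bipartite graph is again chordal bipartite, the forward implication reduces to a single claim: \emph{every chordal bipartite graph on at least three vertices has an eliminable vertex.} Granting this, I would obtain a maximum neighbourhood ordering of any induced subgraph by deleting an eliminable vertex repeatedly until only two vertices remain, each intermediate graph $G_i$ being chordal bipartite; so I would spend the bulk of the effort on that claim.

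For the direction ``$\Leftarrow$'' I would argue by contraposition. If $G$ is not chordal bipartite then it contains an induced cycle $C_\ell$ with $\ell=3$ or $\ell\ge 5$ (a non-bipartite graph has an induced odd cycle, and a bipartite graph that is not chordal bipartite has an induced cycle of length at least six). In such a cycle every vertex $v$ has exactly two neighbours $v^-,v^+$, and for $\ell=3$ or $\ell\ge 5$ the sets $N(v^-)$ and $N(v^+)$ are incomparable, so $v$ has no maximum neighbour. As a maximum neighbourhood ordering of a graph on $\ell\ge 3$ vertices would force $v_1$ to have a maximum neighbour, the induced copy of $C_\ell$ admits no such ordering, and neither does $G$. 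This also recovers the implicit requirement that $G$ be bipartite.

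The heart of the proof is the forward claim, and this is where I expect the real difficulty. First I would use that a bipartite graph is chordal bipartite if and only if its bipartite adjacency matrix $M$ (rows $V_0$, columns $V_1$) is totally balanced, equivalently can be reordered to be $\Gamma$-free, i.e.\ to avoid the pattern $\left(\begin{smallmatrix}1&1\\1&0\end{smallmatrix}\right)$ on any two rows and two columns. In a $\Gamma$-free ordering the \emph{maximum-neighbour} half of eliminability comes for free at the top: the rightmost $1$ of the first row is a maximum neighbour of the corresponding vertex, because $\Gamma$-freeness applied to that row together with any row beneath it forces the support of every column meeting the first row to lie inside the support of its rightmost column. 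For the \emph{covered} half I would bring in a bisimplicial edge $uw$ (every chordal bipartite graph with an edge has one, its endpoints having neighbourhoods that induce a complete bipartite subgraph): if the endpoint $w$ has degree at least two, then $N(u)\subseteq N(u')$ for any other neighbour $u'$ of $w$, so $u$ is covered.

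The main obstacle is that these two witnesses naturally attach to \emph{different} vertices: already the path on four vertices shows that the first row of a $\Gamma$-free matrix may have a maximum neighbour while failing to be covered. The crux is therefore to produce a single vertex meeting both conditions. My plan is to select a vertex whose neighbourhood is inclusion-minimal on its side and to argue, using connectivity together with the staircase shape of a doubly lexical $\Gamma$-free ordering, that such a vertex is both covered and has its incident columns forced into a chain (hence a maximum neighbour). Pinning down this simultaneous witness, and verifying that eliminability is robust enough that the greedy deletion of the reduction step never stalls before two vertices remain, is the part I expect to demand the most care.
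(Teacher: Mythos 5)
The paper offers no proof of this statement: it is quoted verbatim from [DraV96] as an external result, so there is no in-paper argument to compare yours against. Judged on its own, your backward direction is correct and complete (an induced $C_3$ or $C_{\ell}$, $\ell\ge 5$, has no vertex with a maximum neighbour, hence no maximum neighbourhood ordering), and your reduction of the forward direction to the claim that every chordal bipartite graph on at least three vertices contains an eliminable vertex is the right move. But that claim \emph{is} the theorem, and you have not proved it. You correctly diagnose the obstruction --- the rightmost-one of the first row of a $\Gamma$-free ordering certifies a maximum neighbour for one vertex, while a bisimplicial edge certifies coverage for a (generally different) vertex --- and then stop at ``select an inclusion-minimal neighbourhood and argue via connectivity and the staircase shape.'' As written this is a plan, not an argument: inclusion-minimality of $N(v)$ does not by itself yield a $v'$ with $N(v)\subseteq N(v')$, nor does it force the neighbourhoods of the vertices in $N(v)$ into a chain, and you give no mechanism that does. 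Since the entire content of the forward implication lives in this step, the proposal has a genuine gap at its core.

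There is also a structural problem with the reduction itself that you should resolve before attempting the crux. You need the existence claim for \emph{every} induced subgraph, and induced subgraphs of a connected chordal bipartite graph can be disconnected and can contain isolated vertices; yet your sketched argument for the crux explicitly invokes connectivity. With the paper's literal (open-neighbourhood) definitions the claim actually fails for disconnected graphs: $K_2\cup K_1$ is an induced subgraph of the chordal bipartite $P_4$, but none of its three vertices both has a maximum neighbour and is covered (the isolated vertex has no neighbour at all, and neither endpoint of the edge is covered). So either the source's definitions differ in how they treat isolated vertices and trivial components, or the induction must be organized per connected component with the degenerate cases handled separately; your proof needs to take a position on this before the greedy deletion argument can be said not to stall.
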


\begin{lemma}[items (iii) and (v) of the main theorem in~\cite{DraV96}]\label{lem:dragan}
$G$ has a maximum neighbourhood ordering if and only if the system of all iterated neighbourhoods has the Helly property and its intersection graph is chordal.
\end{lemma}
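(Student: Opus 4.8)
The plan is to route the whole equivalence through the classical characterization of hypertrees. Recall that a hypergraph $\mathcal{H}$ on ground set $V$ is a \emph{hypertree} when there is a host tree $T$ with vertex set $V$ such that every hyperedge of $\mathcal{H}$ induces a subtree (a connected subgraph) of $T$; by a theorem of Duchet (and Flament--Slater), $\mathcal{H}$ is a hypertree if and only if $\mathcal{H}$ has the Helly property and its intersection (line) graph is chordal --- one direction being Gavril's subtree-intersection characterization of chordal graphs together with the Helly property of subtrees of a tree, the other being Duchet's theorem. Writing $\mathcal{N} = \{ N_G^k(v) \mid v \in V, \ k \geq 1 \}$ for the system of all iterated neighbourhoods, the right-hand side of the lemma says exactly that $\mathcal{N}$ is a hypertree. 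So the task reduces to proving that $G$ has a maximum neighbourhood ordering if and only if $\mathcal{N}$ admits a host tree, and both directions then become a translation between the recursive elimination data (a maximum neighbour and a covering vertex at each step) and a single global tree structure.

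\textbf{Forward direction.} I would induct on $|V|$. Let $v_1$ be the first vertex of a maximum neighbourhood ordering, with maximum neighbour $u \in N_G(v_1)$ and a vertex $v'$ covering it; the ordering restricts to a maximum neighbourhood ordering of $G_2 := G \setminus v_1$, so the induction hypothesis supplies a host tree $T'$ for the iterated-neighbourhood system of $G_2$. I would build $T$ from $T'$ by attaching $v_1$ as a pendant vertex adjacent to $u$ (the covering vertex $v'$ serving to guarantee that this attachment is consistent with the sets forced to contain $v_1$). The point of the maximum-neighbour condition is precisely that it governs how each iterated neighbourhood changes when $v_1$ is reinserted: a set $N_G^k(w)$ either coincides with $N_{G_2}^k(w)$ or is obtained from it by adjoining $v_1$, and the defining inclusions $N_G(w) \subseteq N_G(u)$ for $w \in N_G(v_1)$ are what force this adjunction to keep the set connected in $T$. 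Verifying this subtree property for every $w$ and every $k$, and then invoking the hypertree theorem, yields that $\mathcal{N}$ is Helly with chordal line graph.

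\textbf{Converse.} I would start from a host tree $T$ for $\mathcal{N}$ and read off an elimination order. The goal is to locate a vertex $v_1$ that \emph{simultaneously} has a maximum neighbour and is covered in $G$; a natural candidate is a leaf of $T$, exploiting that $N_G(v_1)$ and the sets $N_G^k(v_1)$ are subtrees of $T$ hanging off $v_1$, which should pin down the maximum neighbour as the tree-neighbour of $v_1$ carrying the largest neighbourhood. After removing $v_1$, I need $T - v_1$ to remain a host tree for the iterated-neighbourhood system of $G \setminus v_1$; granting this, induction produces the full ordering. Here Lemma~\ref{lem:iterated-ball} is the reason the iterated neighbourhoods are the right family to track, since it is exactly the iterated neighbourhoods (and not merely the open neighbourhoods) that encode the ball structure relevant to eccentricities.

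\textbf{Main obstacle.} In both directions the real difficulty is that the constrained objects are \emph{all} iterated neighbourhoods $N_G^k(v)$ rather than just the open neighbourhoods, and these sets do not behave simply under deletion of a vertex: $N_{G \setminus v_1}^k(w)$ need not equal $N_G^k(w)$ even when $w$ is far from $v_1$, because a $k$-step reachability can route through $v_1$. Controlling this interaction --- equivalently, showing that the hypertree (Helly-plus-chordal) property is inherited by $G \setminus v_1$ in a way compatible with the elimination order, and that every $N^k$ stays a subtree when a single leaf is added or removed --- is the technical heart of the argument. Once that control is secured, the classical hypertree equivalence and the leaf-attachment/deletion bookkeeping are comparatively routine.
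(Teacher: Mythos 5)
The first thing to note is that the paper does not prove this statement at all: Lemma~\ref{lem:dragan} is imported verbatim as items (iii) and (v) of the main theorem of~\cite{DraV96}, so there is no in-paper proof to compare against. Your opening reduction is nevertheless the right one and matches how this equivalence is established in that literature: by Duchet's theorem (with Flament and Slater), a hypergraph has the Helly property with chordal line graph if and only if it is a hypertree, so the right-hand side of the lemma is exactly the assertion that the system of iterated neighbourhoods admits a host tree on $V$. The problem is that everything after that reduction is an outline with the decisive steps missing, and you say so yourself in your ``main obstacle'' paragraph. In the forward direction the crux is the claim that for $w \neq v_1$ each $N_G^k(w)$ and $N_{G\setminus v_1}^k(w)$ differ by at most the single element $v_1$. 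Neither inclusion is free. With the recursive definition $N^{k+1}(w) = N(N^k(w))$, rerouting a walk through the covering vertex $v'$ instead of $v_1$ can change which level a vertex first appears in, and the set-difference in the definition of $N(\cdot)$ then excludes it from later levels; conversely, reinserting $v_1$ can also shift levels. You gesture at the covering vertex but do not carry out this bookkeeping, and you do not address at all why the \emph{new} hyperedges $N_G^k(v_1)$, which have no counterpart in the system for $G\setminus v_1$, induce subtrees of the extended tree --- this is presumably where the maximum neighbour $u$ must enter, via a relation between $N^k(v_1)$ and iterated neighbourhoods of $u$, but it is not worked out.

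The converse direction contains an outright error: since the neighbourhoods here are \emph{open}, $v_1 \notin N_G(v_1)$, so the sets $N_G^k(v_1)$ are emphatically not ``subtrees of $T$ hanging off $v_1$''; a leaf of the host tree need not sit inside any of its own iterated neighbourhoods, and the identification of the maximum neighbour as ``the tree-neighbour of $v_1$ carrying the largest neighbourhood'' is unsupported. You also need to produce \emph{both} a maximum neighbour and a covering vertex for the chosen leaf (the paper's definition of a maximum neighbourhood ordering requires both at every step up to $n-2$), and to show that $T - v_1$ remains a host tree for the iterated-neighbourhood system of $G \setminus v_1$, which runs into the same level-shifting issue as the forward direction but now without a covering vertex available a priori. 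In short: the hypertree reformulation is correct and is the standard route, but the two inductions are not proofs, and one of their supporting claims is false as stated. Given that the paper uses this lemma purely as a cited black box, the honest options are either to cite~\cite{DraV96} as the paper does, or to supply the full walk-rerouting and level-tracking arguments that your sketch defers.
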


The next result now follows by combining Lemmas~\ref{lem:strongly-chordal}--\ref{lem:dragan}.

\begin{lemma}\label{lem:partite-set-strongly-chordal}
If $G=(V_0\cup V_1,E)$ is chordal bipartite, then $H_0$ and $H_1$ are strongly chordal.
\end{lemma}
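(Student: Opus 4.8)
The plan is to invoke Lemma~\ref{lem:strongly-chordal}: since being strongly chordal is equivalent to every induced subgraph being dually chordal, it suffices to show that for every $S \subseteq V_0$ the induced subgraph $H_0[S]$ is dually chordal (the statement for $H_1$ then follows by exchanging the roles of $V_0$ and $V_1$). Recall that, per the paper's terminology, ``dually chordal'' means \emph{Helly} (the ball hypergraph satisfies the Helly property) together with \emph{chordal intersection graph of the balls}, so these are the two properties I will verify.

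The first step is a reduction that transfers induced subgraphs of $H_0$ to induced subgraphs of $G$. I would set $G_S := G[S \cup N_G(S)]$, the subgraph of $G$ induced by $S$ together with all its $G$-neighbours. Its partite sets are exactly $S \subseteq V_0$ and $N_G(S) \subseteq V_1$, and two vertices of $S$ share a neighbour in $G_S$ if and only if they share one in $G$, since every such common neighbour lies in $V_1 \cap N_G(S) \subseteq V(G_S)$. Consequently the companion graph built from $G_S$ on its partite set $S$ is \emph{exactly} $H_0[S]$. Moreover $G_S$, being an induced subgraph of the chordal bipartite graph $G$, is itself chordal bipartite. So it remains to prove the clean statement: for every chordal bipartite graph $G'$ with partite sets $V_0', V_1'$, the associated graph $H_0' := H_0(G')$ is dually chordal.

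For this, I would use Lemma~\ref{lem:min} to get that $G'$ admits a maximum neighbourhood ordering, and then Lemma~\ref{lem:dragan} to conclude that the whole system $\mathcal{N} = \{N_{G'}^{j}(w) : w \in V(G'),\ j \geq 1\}$ of iterated neighbourhoods has the Helly property and a chordal intersection graph. The bridge to $H_0'$ is Lemma~\ref{lem:iterated-ball}: for $v \in V_0'$ and $k \geq 1$ one has $N_{G'}^{2k}(v) = N_{H_0'}^{k}[v]$, so the positive-radius balls of $H_0'$ form a \emph{subfamily} $\mathcal{B} \subseteq \mathcal{N}$. Both relevant properties pass to subfamilies: any pairwise intersecting subcollection of $\mathcal{B}$ is also one of $\mathcal{N}$, hence has a common point, so $\mathcal{B}$ is Helly; and the intersection graph of $\mathcal{B}$ is the subgraph of the intersection graph of $\mathcal{N}$ induced on $\mathcal{B}$, which stays chordal. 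The only balls of $H_0'$ missing from $\mathcal{B}$ are the radius-$0$ singletons $\{v\}$, and these are harmless: a pairwise intersecting family containing $\{v\}$ has all its members passing through $v$ (so singletons never break the Helly property), and in the intersection graph the balls meeting $\{v\}$ pairwise intersect at $v$ and hence form a clique, making $\{v\}$ a simplicial vertex whose addition preserves chordality. Thus the full ball hypergraph of $H_0'$ is Helly with chordal intersection graph, i.e.\ $H_0'$ is dually chordal. Taking $G' = G_S$ for every $S \subseteq V_0$ and applying Lemma~\ref{lem:strongly-chordal} yields that $H_0$ is strongly chordal, and symmetrically for $H_1$.

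The step I expect to be the main obstacle is the initial reduction rather than the (essentially routine) heredity arguments: deleting a single vertex of $H_0$ may correspond to deleting several vertices of $G$ and can also delete edges of $H_0$ that came from a now-removed common neighbour, so one must argue carefully that an arbitrary induced subgraph $H_0[S]$ is genuinely realized as the companion graph of an induced \emph{chordal bipartite} subgraph of $G$ --- this is precisely what makes Lemmas~\ref{lem:min} and~\ref{lem:dragan} applicable. Once the identity $H_0[S] = H_0(G_S)$ is established, the remaining work (subfamily heredity of the Helly property, the induced-subgraph argument for chordality of the intersection graph, and the treatment of singleton balls) is straightforward.
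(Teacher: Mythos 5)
Your proof is correct and follows the same skeleton as the paper's: reduce to induced subgraphs of $H_0$ via the induced chordal bipartite subgraph $G_S = G[N_G[S]]$, verify $H_0[S]$ is the companion graph of $G_S$, and then combine Lemmas~\ref{lem:strongly-chordal}, \ref{lem:min}, \ref{lem:dragan} and~\ref{lem:iterated-ball}. The one place you diverge is in how the Helly property of the balls of $H_0[S]$ is obtained: the paper gets it from Lemma~\ref{lem:half-helly} (via the fact that $G_S$, being chordal bipartite, is an absolute retract of bipartite graphs, so Theorem~\ref{thm:bipartite-retract} applies), whereas you read it off directly from Lemma~\ref{lem:dragan} together with the observation that the Helly property is inherited by subfamilies; this makes your argument slightly more self-contained, using only the maximum-neighbourhood-ordering machinery and not the absolute-retract characterization. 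You are also more careful than the paper about the radius-zero balls, which Lemma~\ref{lem:iterated-ball} does not cover (it only identifies balls of radius $k\ge 1$ with iterated neighbourhoods); your simplicial-vertex argument for the singletons correctly closes that small gap, which the paper's proof glosses over.
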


\begin{proof}
By symmetry, it suffices to prove the result for $H_0$. For that, by Lemma~\ref{lem:partite-set-strongly-chordal}, it is sufficient to prove that every induced subgraph of $H_0$ is dually chordal. Let $U \subseteq V_0$ be arbitrary. We define $G_U$ as the subgraph induced by $N_G[U]$ (the union of $U$ and of all the vertices of $V_1$ with a neighbour in $U$). By construction, $H_0[U]$ is exactly the graph with vertex-set $U$ and an edge between every two vertices with a common neighbour in $G_U$. Furthermore, since the class of chordal bipartite graphs is hereditary, $G_U$ is chordal bipartite, and so, by~\cite{BDS87}, an absolute retract of bipartite graphs. It thus follows from Lemma~\ref{lem:half-helly} that $H_0[U]$ is a Helly graph. In order to prove that $H_0[U]$ is dually chordal, it now suffices to prove that the intersection graph of its balls is chordal. By the combination of Lemmas~\ref{lem:min} and~\ref{lem:dragan}, the intersection graph $I_U$ of all iterated neighbourhoods of $G_U$ is chordal. By Lemma~\ref{lem:iterated-ball}, the intersection graph of all balls of $H_0[U]$ is an induced subgraph of $I_U$ and therefore, it is also chordal.   
\end{proof}

\paragraph{Computation of a clique-tree.} For a graph $H=(V,E)$, a clique-tree is a tree $T$ whose nodes are the maximal cliques of $H$ and such that, for every $v \in V$, the maximal cliques of $H$ containing $v$ induce a connected subtree $T_v$ of $T$. It is well-known that $H$ is chordal if and only if it has a clique-tree~\cite{Bun74,Gav74,Wal72}. Our intermediate goal is, given a chordal bipartite graph $G = (V_0 \cup V_1, E)$, to compute a clique-tree for $H_0$ and $H_1$ (that are chordal graphs by the above Lemma~\ref{lem:partite-set-strongly-chordal}).

\smallskip
\noindent
A hypergraph ${\cal H} = (X,{\cal R})$ is called a dual hypertree if there exists a tree $T$ whose nodes are the hyperedges in ${\cal R}$ and such that, for every $x \in X$, the hyperedges containing $x$ induce a connected subtree $T_x$ of $T$ (such tree $T$ is called a join tree of ${\cal H}$). Note that dual hypertrees can be recognized in linear time~\cite{TaY84}. Furthermore, we have: 

\begin{lemma}[\cite{DraV96}]\label{lem:hypertree}
If $G=(V_0\cup V_1,E)$ is chordal bipartite then, both hypergraphs $(V_0,\{N_G(v) \mid v \in V_1\})$ and $(V_1,\{N_G(v) \mid v \in V_0\})$ are dual hypertrees.
\end{lemma}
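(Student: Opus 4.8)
The plan is to reduce the statement to two facts already established in this section---the Helly property of half-balls (Theorem~\ref{thm:bipartite-retract}) and the chordality of $H_0,H_1$ (Lemma~\ref{lem:partite-set-strongly-chordal})---through the classical duality between dual hypertrees and \emph{subtree hypergraphs}. Call a hypergraph $\mathcal{F}=(Y,\mathcal{S})$ a subtree hypergraph if there is a host tree on the vertex set $Y$ in which every hyperedge of $\mathcal{S}$ induces a subtree. By a classical theorem (Duchet, Flament, Slater; see also Berge's book on hypergraphs), $\mathcal{F}$ is a subtree hypergraph if and only if $\mathcal{F}$ satisfies the Helly property and its line graph, i.e.\ the intersection graph of its hyperedges, is chordal. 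The key observation is that a hypergraph $\mathcal{H}=(X,\mathcal{R})$ is a dual hypertree in the sense of this section exactly when its dual hypergraph $\mathcal{H}^{*}$ is a subtree hypergraph: a join tree of $\mathcal{H}$ is literally a host tree for $\mathcal{H}^{*}$, since the hyperedges of $\mathcal{H}$ are the vertices of $\mathcal{H}^{*}$ while the elements of $X$ index the hyperedges of $\mathcal{H}^{*}$, and the subtree conditions coincide.

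By symmetry it suffices to treat $\mathcal{H}_0 := (V_0, \{N_G(v) \mid v \in V_1\})$. First I would compute its dual. For $x \in V_0$, the hyperedges of $\mathcal{H}_0$ containing $x$ are the sets $N_G(v)$ with $x \in N_G(v)$, that is, with $v \in N_G(x)$; after identifying each hyperedge $N_G(v)$ with its index $v$, the dual is $\mathcal{H}_0^{*} \cong (V_1, \{N_G(x) \mid x \in V_0\})$, which is precisely the companion hypergraph $\mathcal{H}_1$ occurring in the statement. Hence $\mathcal{H}_0$ is a dual hypertree if and only if $\mathcal{H}_1$ is a subtree hypergraph, and it remains to check the two hypotheses of the subtree-hypergraph characterization for $\mathcal{H}_1$.

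For the Helly property: each hyperedge $N_G(x) = N_G^1[x] \cap V_1$ of $\mathcal{H}_1$ is a (radius-one) half-ball of $G$, and since a chordal bipartite graph is an absolute retract of bipartite graphs~\cite{BDS87}, Theorem~\ref{thm:bipartite-retract} guarantees that any pairwise intersecting subfamily of half-balls---in particular of these radius-one half-balls---has a common element; so $\mathcal{H}_1$ is Helly. For chordality of the line graph: two hyperedges $N_G(x)$ and $N_G(x')$ meet if and only if $x$ and $x'$ have a common neighbour in $V_1$, which is exactly the adjacency relation defining $H_0$; hence the line graph of $\mathcal{H}_1$ is $H_0$, and $H_0$ is (strongly) chordal by Lemma~\ref{lem:partite-set-strongly-chordal}. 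Applying the characterization, $\mathcal{H}_1$ is a subtree hypergraph, so $\mathcal{H}_0$ is a dual hypertree; the same argument with the roles of $V_0$ and $V_1$ exchanged settles the companion hypergraph.

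I expect the main obstacle to be pinning down the hypergraph-duality dictionary cleanly---namely the chain ``$\mathcal{H}$ is a dual hypertree'' $\Leftrightarrow$ ``$\mathcal{H}^{*}$ is a subtree hypergraph'' $\Leftrightarrow$ ``$\mathcal{H}^{*}$ is Helly with chordal line graph''---and disposing of the harmless degeneracies it hides: vertices of $V_1$ with equal neighbourhoods produce repeated hyperedges, hence twin vertices in the line graph (which preserve chordality) and repeated tree nodes that can be merged or placed adjacently in the join tree. Once this framework is in place, the identifications $\mathcal{H}_0^{*}\cong\mathcal{H}_1$ and ``line graph of $\mathcal{H}_1$ equals $H_0$'', combined with the already-proved Helly and chordality statements, reduce the remainder to bookkeeping. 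A fully self-contained alternative would instead invoke the Beeri--Fagin--Maier--Yannakakis criterion that a hypergraph has a join tree if and only if it is conformal with chordal $2$-section: here the $2$-section of $\mathcal{H}_0$ is again $H_0$, and conformality of $\mathcal{H}_0$ is exactly the Helly property of the radius-one half-balls centred in $V_1$, so the same two ingredients suffice with the duality replaced by a direct verification of conformality.
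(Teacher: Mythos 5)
Your proof is correct, but note that the paper does not actually prove this lemma: it is imported as a black box from~\cite{DraV96}, so the comparison here is between your argument and a bare citation. What you supply is a legitimate, self-contained derivation from material already available at this point of the section. The identification $\mathcal{H}_0^{*} \cong (V_1,\{N_G(x) \mid x \in V_0\})$ is right; the equivalence ``$\mathcal{H}$ is a dual hypertree $\Leftrightarrow$ $\mathcal{H}^{*}$ is a subtree hypergraph'' is exactly the definition of a join tree read through hypergraph duality; and the Duchet--Flament--Slater characterization then reduces everything to the Helly property of the radius-one half-balls (Theorem~\ref{thm:bipartite-retract}, via the fact, already recorded in the paper, that chordal bipartite graphs are absolute retracts of bipartite graphs) and to chordality of $H_0$ (Lemma~\ref{lem:partite-set-strongly-chordal}). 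Neither ingredient depends on Lemma~\ref{lem:hypertree} in the paper's logical order, so there is no circularity, and your handling of repeated hyperedges (true twins in the line graph, duplicate nodes hung adjacent to their copies in the join tree) disposes of the only degeneracy. Two small remarks: in the closing alternative via Beeri--Fagin--Maier--Yannakakis, conformality of $(V_0,\{N_G(v)\mid v\in V_1\})$ amounts to the Helly property of the half-balls $N_G(x)$ for $x\in V_0$, i.e.\ half-balls \emph{centred at vertices of} $V_0$ (and contained in $V_1$), not ``centred in $V_1$'' as written; and what your route buys over the citation is that the lemma becomes a corollary of Theorem~\ref{thm:bipartite-retract} and Lemma~\ref{lem:partite-set-strongly-chordal} plus one classical hypergraph fact, at the cost of importing that fact (which is of the same vintage as, and closely related to, the cited Lemma~\ref{lem:dragan}).
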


\begin{corollary}\label{cor:clique-tree}
If $G=(V_0\cup V_1,E)$ is chordal bipartite then, we can compute a clique-tree for $H_0$ and $H_1$ in linear time.
\end{corollary}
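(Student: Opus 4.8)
The plan is to obtain a clique-tree of $H_0$ (the case of $H_1$ being symmetric) almost for free from the join tree of the neighbourhood hypergraph provided by Lemma~\ref{lem:hypertree}. The first step is to identify the maximal cliques of $H_0$ with the inclusion-wise maximal sets in $\{ N_G(v) \mid v \in V_1 \}$. On the one hand, each $N_G(v)$ with $v \in V_1$ is a clique of $H_0$, since any two of its vertices share the common neighbour $v$. On the other hand, since $G$ is chordal bipartite, it is an absolute retract of bipartite graphs~\cite{BDS87}, and so, by Theorem~\ref{thm:bipartite-retract}, its half-balls satisfy the Helly property. Hence, given any clique $C$ of $H_0$, the half-balls $N_G(x) = N_G^1[x] \cap V_1$, for $x \in C$, pairwise intersect (every two vertices of a clique of $H_0$ have a common neighbour in $G$), and therefore they have a common element $v \in V_1$; that is, $C \subseteq N_G(v)$. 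It follows that every clique of $H_0$ is contained in some $N_G(v)$, and so, the maximal cliques of $H_0$ are exactly the maximal hyperedges of $(V_0, \{ N_G(v) \mid v \in V_1 \})$.

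Next, I would compute a join tree $T$ of this hypergraph. By Lemma~\ref{lem:hypertree} the hypergraph is a dual hypertree, and such a tree can be computed in linear time~\cite{TaY84}. The nodes of $T$ are all the neighbourhoods $N_G(v)$, $v \in V_1$, possibly including duplicates and non-maximal sets, whereas a clique-tree of $H_0$ must have exactly the maximal cliques as its nodes. The remaining step is therefore to reduce $T$ to a tree supported on the maximal hyperedges only, while preserving for every $x \in V_0$ the property that the nodes containing $x$ induce a connected subtree $T_x$. The key observation making this possible is that, in a join tree, a non-maximal hyperedge $R$ is always contained in one of its \emph{neighbours}: if $R \subsetneq R'$ and $R''$ denotes the neighbour of $R$ on the $R$--$R'$ path of $T$, then the subtree property applied to each $x \in R$ forces $R \subseteq R''$. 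Consequently one can repeatedly contract each non-maximal node $R$ into such a neighbour $R''$ (re-attaching the other neighbours of $R$ to $R''$); a standard argument shows that this operation keeps every $T_x$ connected, and after exhaustively applying it the surviving nodes are precisely the maximal hyperedges, {\it i.e.}, the maximal cliques of $H_0$.

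To carry out this reduction in linear time, I would avoid all pairwise comparisons of hyperedges and instead test containment only along the edges of $T$. For each edge $\{R,R'\}$ of $T$ it suffices to know the separator size $|R \cap R'|$, since $R \subseteq R'$ if and only if $|R \cap R'| = |R|$. All these separator sizes can be computed within ${\cal O}(m)$ total time by charging each element $x \in V_0$ to the edges of its subtree $T_x$, as an edge $\{R,R'\}$ contributes to $|R \cap R'|$ exactly when it lies in $T_x$; the total charge is then $\sum_{v \in V_1} |N_G(v)| = m$. Equipped with the separator sizes, the contractions can be organised so that each node is processed a constant number of times, giving an overall linear running time.

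The main obstacle I anticipate is exactly this last, bookkeeping-heavy step: turning the correct but naive ``contract every non-maximal node'' procedure into a genuinely linear-time algorithm. One must maintain the separator sizes under contractions, ensure that merging the adjacency lists of $R$ and $R''$ does not incur repeated cost, and handle duplicate hyperedges ($N_G(v) = N_G(v')$) cleanly. All of these are standard data-structure manipulations on trees, but they are where the linear-time claim actually has to be earned; by contrast, the structural part -- Lemma~\ref{lem:hypertree} together with the identification of the maximal cliques of $H_0$ with the maximal neighbourhoods -- is comparatively immediate.
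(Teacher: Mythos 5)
Your proposal is correct and follows the same skeleton as the paper's proof: invoke Lemma~\ref{lem:hypertree} to get a dual hypertree on $\{N_G(v) \mid v \in V_1\}$, identify the maximal cliques of $H_0$ with the inclusion-wise maximal hyperedges, and output a join tree supported on those. The only differences are in how two sub-steps are discharged. For the fact that every clique of $H_0$ lies inside some $N_G(v)$, the paper cites the conformality of acyclic hypergraphs from~\cite{BFMY83}, whereas you give a direct argument via the Helly property of half-balls (Theorem~\ref{thm:bipartite-retract}); both are valid, and yours is self-contained. For the reduction to maximal hyperedges, the paper simply cites~\cite{TaY84} for a linear-time reduction of the hypergraph followed by a linear-time join-tree computation, whereas you build the join tree of the unreduced hypergraph first and then contract non-maximal nodes by hand, using separator sizes along tree edges. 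Your contraction argument (a non-maximal $R$ is contained in its neighbour on the path to any $R'\supsetneq R$, and contracting preserves the subtree property) is sound, but it puts the burden of the linear-time claim on bookkeeping that the paper offloads entirely to~\cite{TaY84}; citing that reference would shorten your proof considerably.
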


\begin{proof}
By symmetry, we only prove the result for $H_0$. Consider the hypergraph ${\cal H} = (V_0,\{N_G(v) \mid v \in V_1\})$. Note that the underlying graph of ${\cal H}$ (obtained by adding an edge between every two vertices that are contained in a common hyperedge of ${\cal H}$) is exactly $H_0$. Since by Lemma~\ref{lem:hypertree}, ${\cal H}$ is a dual hypertree, every maximal clique of the underlying graph $H_0$ must be a hyperedge of ${\cal H}$~\cite{BFMY83}. Then, let us reduce ${\cal H}$, {\it i.e.}, we remove all hyperedges that are strictly contained into another hyperedge. Again since ${\cal H}$ is a dual hypertree, the resulting reduced hypergraph ${\cal H}'$ can be computed in linear time~\cite{TaY84}. Furthermore, by construction, the hyperedges of ${\cal H}'$ are exactly the maximal cliques of $H_0$. Let us construct a join tree of ${\cal H}'$. It can be done in linear time~\cite{TaY84}. We are done as such join tree is a clique-tree of $H_0$. 
\end{proof}

\paragraph{Computation of all the eccentricities in the partite sets.} Next, we propose a new algorithm in order to compute all the eccentricities of a strongly chordal graph $H$, being given a clique-tree. We often use in our proof the clique-vertex incidence graph of $H$, {\it i.e.}, the bipartite graph whose partite sets are the vertices and the maximal cliques of $H$, and such that there is an edge between every vertex of $H$ and every maximal clique of $H$ containing it. 

Let us first recall the following result about the eccentricity function of Helly graphs:

\begin{lemma}[\cite{Dra89}]\label{lem:unimodal}
If $H=(V,E)$ is Helly then, for every vertex $v$ we have $e_H(v) = d_H(v,C(H)) + rad(H)$, where $C(H)$ denotes the set of central vertices of $H$. 
\end{lemma}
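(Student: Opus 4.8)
The plan is to prove the two inequalities $e_H(v) \le d_H(v,C(H)) + rad(H)$ and $e_H(v) \ge d_H(v,C(H)) + rad(H)$ separately. The first is immediate and holds for \emph{every} graph: by the triangle inequality, for any vertex $u$ and any central vertex $c$ we have $d_H(v,u) \le d_H(v,c) + d_H(c,u) \le d_H(v,c) + e_H(c) = d_H(v,c) + rad(H)$; maximizing over $u$ and then choosing $c \in C(H)$ closest to $v$ yields $e_H(v) \le d_H(v,C(H)) + rad(H)$. All the work therefore goes into the reverse inequality, where the Helly property must be used.

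For the lower bound, the key is to establish that the eccentricity function of a Helly graph is \emph{unimodal}: every non-central vertex $v$ (that is, $e_H(v) > rad(H)$) has a neighbour $w$ with $e_H(w) = e_H(v) - 1$; equivalently, by the $1$-Lipschitz property of eccentricities along edges, with $e_H(w) < e_H(v)$. Writing $e := e_H(v)$, I would apply the Helly property to the family of balls $\{N_H^1[v]\} \cup \{N_H^{e-1}[u] : u \in V\}$. Two observations make this family pairwise intersecting. First, since $v$ is non-central we have $e - 1 \ge rad(H)$, so any fixed central vertex $c$ satisfies $d_H(c,u) \le e_H(c) = rad(H) \le e-1$ for every $u$; hence $c$ lies in \emph{all} the balls $N_H^{e-1}[u]$, which therefore pairwise intersect. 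Second, $N_H^1[v]$ meets each $N_H^{e-1}[u]$: if $d_H(v,u) \le e-1$ then $v$ itself lies in the intersection, and otherwise $d_H(v,u) = e$ and any neighbour of $v$ on a shortest $v$--$u$ path is within distance $e-1$ of $u$. By the Helly property the family has a common point $w$, which satisfies $d_H(w,v) \le 1$ and $d_H(w,u) \le e-1$ for all $u$, i.e. $w \in N_H[v]$ and $e_H(w) \le e-1$; since $e_H(w) < e_H(v)$ forces $w \ne v$, this $w$ is the desired neighbour.

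Granting unimodality, the lower bound follows by descent. Starting from $v$ and repeatedly passing to a neighbour of strictly smaller eccentricity (necessarily smaller by exactly one, as eccentricity is $1$-Lipschitz along edges), I obtain a path $v = v_0, v_1, \ldots, v_t$ with $e_H(v_i) = e_H(v) - i$, stopping at the first vertex $v_t$ with $e_H(v_t) = rad(H)$, which is central; here $t = e_H(v) - rad(H)$. Since $v_t \in C(H)$ and $d_H(v, v_t) \le t$, we get $d_H(v, C(H)) \le e_H(v) - rad(H)$, that is $e_H(v) \ge d_H(v,C(H)) + rad(H)$. Combined with the trivial upper bound, this gives the claimed equality.

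The main obstacle is the unimodality step, and within it the verification that the enlarged family is \emph{pairwise} intersecting once the radius-$1$ ball around $v$ is included. The clean way around this, which I would emphasize, is the remark that every ball of radius at least $rad(H)$ automatically contains the whole centre $C(H)$; thus the only genuinely new intersections to check are those between $N_H^1[v]$ and the balls $N_H^{e-1}[u]$, and these are handled directly by walking one step along a shortest $v$--$u$ path.
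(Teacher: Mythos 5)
Your proof is correct. Note that the paper does not prove this lemma at all --- it is imported verbatim from~\cite{Dra89} --- so there is no in-paper argument to compare against; what you have written is a correct, self-contained derivation of the standard unimodality argument for Helly graphs. Both halves check out: the upper bound is the generic triangle-inequality bound, and for the lower bound your family $\{N_H^1[v]\} \cup \{N_H^{e-1}[u] : u \in V\}$ is indeed pairwise intersecting (the balls of radius $e-1 \ge rad(H)$ all contain any central vertex, and $N_H^1[v]$ meets each of them via a first step on a shortest path), so the Helly property yields the neighbour of eccentricity exactly $e-1$ and the descent to the centre goes through.
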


Hence, by Lemma~\ref{lem:unimodal}, we are left computing $C(H)$. It starts with computing one central vertex. Define, for every vertex $v$ and vertex-subset $C$, $d_H(v,C) = \min_{c \in C} d_H(v,c)$. Following~\cite{ChD94}, we call a set $C$ gated if, for every $v \notin C$, there exists a vertex $v^* \in N_H^{d_H(v,C)-1}[v] \cap \left( \bigcap \{ N_H(c) \mid c \in C, \ d_H(v,c) = d_H(v,C) \} \right)$ (such vertex $v^*$ is called a gate of $v$).

\begin{lemma}[\cite{ChN84}]\label{lem:gated-clique}
Every clique in a chordal graph is a gated set.
\end{lemma}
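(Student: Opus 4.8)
The plan is to unwind the definition of a gated set and reduce the statement to a single combinatorial claim about shortest paths. Fix a clique $C$ of the chordal graph $H$ and a vertex $v \notin C$, and write $k = d_H(v,C)$ and $C_k = \{c \in C \mid d_H(v,c) = k\}$ for the set of closest clique-vertices; note that $C_k$ is itself a clique and is nonempty. Since every neighbour $z$ of a vertex $c \in C_k$ satisfies $d_H(v,z) \geq k-1$ by the triangle inequality, any vertex lying in $\bigcap_{c \in C_k} N_H(c)$ automatically belongs to $N_H^{k-1}[v]$. Hence producing a gate of $v$ is exactly the same as exhibiting a \emph{single} common neighbour of all the vertices of $C_k$; equivalently, writing $L_i = \{u \mid d_H(v,u)=i\}$ for the BFS levels from $v$, I must show that the clique $C_k \subseteq L_k$ has a common neighbour in $L_{k-1}$. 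I would prove this by induction on $k$, the case $k=1$ being immediate since then $v$ itself is adjacent to every vertex of $C_1$.

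For the inductive step ($k \geq 2$) I would first isolate the pairwise version: any two adjacent vertices $x,y \in L_k$ admit a common neighbour in $L_{k-1}$. To see this, pick predecessors $z_x \in N_H(x)\cap L_{k-1}$ and $z_y \in N_H(y)\cap L_{k-1}$ along shortest paths from $v$; if they coincide, or if one is adjacent to the opposite endpoint, we are done, and otherwise I consider a cycle built from the edges $z_x x$, $xy$, $yz_y$ together with a shortest $z_y$--$z_x$ path inside the (connected) ball $N_H^{k-1}[v]$, chosen so as to be chordless. Chordality then forces a chord, which --- since $x,y$ have no neighbours below level $k-1$ and the level-$(k-1)$ portion is an induced shortest path --- must join $x$ or $y$ to an interior level-$(k-1)$ vertex, allowing me to shrink the configuration and conclude by descent that $x$ and $y$ share a level-$(k-1)$ neighbour.

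To pass from this pairwise statement to a common neighbour of the entire clique $C_k$, I would invoke the chordal-graph fact that every minimal separator is a clique. Taking a minimal separator $S \subseteq L_{k-1}$ that detaches $v$ from the component carrying $C_k$, minimality guarantees that each vertex of $S$ sees that component, while $S$ being a clique lets me select, among the level-$(k-1)$ neighbours of $C_k$, one that dominates all of $C_k$ at once. An alternative route, perhaps cleaner to write using the clique-tree machinery already developed in this section, is to locate a maximal clique containing $C_k$ in the clique-tree and walk one step toward the subtree of cliques meeting $N_H^{k-1}[v]$; the separating clique encountered then supplies the desired common neighbour.

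The main obstacle is precisely this last step: passing from shortest-path predecessors, which exist vertex-by-vertex, to one vertex adjacent to all of $C_k$ simultaneously. A priori the sets $N_H(c) \cap L_{k-1}$ for $c \in C_k$ need only be pairwise intersecting, and pairwise intersection does not by itself yield a common point; it is exactly here that chordality must be used in an essential way --- either through the chord analysis of a carefully chosen induced cycle, or through the clique-ness of minimal separators. Assembling these pieces recovers the cited result of~\cite{ChN84} that cliques are gated in chordal graphs, in the relaxed sense used here.
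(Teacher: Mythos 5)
The paper does not prove this lemma; it imports it verbatim from~\cite{ChN84}, so your attempt can only be judged on its own terms. Your reduction of the statement to ``the clique $C_k$ of nearest clique-vertices has a common neighbour in the level $L_{k-1}$'' is the right target, but the justification you give for it is backwards: for a common neighbour $z$ of $C_k$ the triangle inequality yields $d_H(v,z) \geq k-1$, not $\leq k-1$, so membership in $N_H^{k-1}[v]$ is \emph{not} automatic --- it is exactly the constraint that forces the common neighbour to sit in level $k-1$ rather than in level $k$ or $k+1$. The pairwise step (two adjacent vertices of $L_k$ share a neighbour in $L_{k-1}$) is the standard ``triangle condition'' of chordal graphs and your cycle-and-chord sketch is recoverable.

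The genuine gap is the step you yourself flag as the main obstacle: neither of your two routes from pairwise intersection to a common neighbour of all of $C_k$ is actually carried out. The minimal-separator route fails as written: a minimal separator $S$ detaching $v$ from $C_k$ need not lie in $L_{k-1}$, and even if each vertex of $S$ ``sees the component carrying $C_k$'', nothing makes any single vertex of $S$ adjacent to every vertex of $C_k$; the sentence ``$S$ being a clique lets me select one that dominates all of $C_k$'' simply restates the conclusion. The clique-tree route can be completed, but the decisive verification is missing. Concretely: take a maximal clique $K$ with $C_k \subseteq K$ and walk along the clique-tree path from $K$ towards a maximal clique containing $v$, stopping at the first edge whose separator $S = K_i \cap K_{i+1}$ omits some $c \in C_k$ (such an edge exists for $k \geq 2$ since no maximal clique contains both $v$ and $c$, and up to that point $C_k$ survives in every $K_j$). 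Then $S \subseteq K_i$ and $C_k \subseteq K_i$, so every vertex of $S \setminus C_k$ is adjacent to all of $C_k$; and since $S$ separates $v$ from $c \in K_i \setminus S$, a shortest $v$--$c$ path meets $S$ at a vertex $s$ with $d_H(s,c)=1$, whence $d_H(v,s)=k-1$ and $s \notin C_k$. This $s$ is the desired gate. Note that once this computation is in place, the induction on $k$ and the pairwise lemma become superfluous, so the architecture of your argument should be reorganised around this single separator step.
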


\begin{lemma}[\cite{DuD19+}]\label{lem:compute-gate}
If $T$ is a clique-tree of a chordal graph $H$ then, for every clique $C$ of $H$, for every $v \notin C$ we can compute $d_H(v,C)$ and a corresponding gate $v^*$ in total ${\cal O}(w(T))$ time, where $w(T)$ denotes the sum of cardinalities of all the maximal cliques of $H$. 
\end{lemma}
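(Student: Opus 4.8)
The plan is to avoid computing $H$ explicitly, since $w(T)$ may be far smaller than $|E(H)|$ (for instance when $H$ is a single clique), and to work only with the vertex--clique incidences recorded by $T$. First I would root $T$ at a maximal clique $K_0$ containing $C$ (such a clique exists because $C$ is a clique of the chordal graph $H$), and in one top-down pass compute, for every vertex $v$, its \emph{top} clique $top(v)$: the node of the connected subtree $T_v$ closest to the root. This takes ${\cal O}(w(T))$ time by scanning each clique once in order of nondecreasing depth and recording, for each vertex, the first (highest) clique in which it occurs. Note that $v \in S_K := K \cap parent(K)$ if and only if $v \in K$ and $top(v) \neq K$, so all separators become available for free.

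Next I would compute the distances $d_H(v,C)$ by a second top-down sweep. The key structural fact is that for a non-root node $K$ with parent $K_p$, the separator $S_K = K \cap K_p$ is a clique and separates $C$ (which lies on the root side) from every vertex introduced strictly below $K$. Hence, writing $g(K) := \min_{u \in S_K} d_H(u,C)$, every vertex $w$ with $top(w) = K$ lies in $K \setminus S_K$, is adjacent to all of $S_K$, and satisfies $d_H(w,C) = 1 + g(K)$; for the root, $d_H(v,C) = 0$ if $v \in C$ and $1$ otherwise. Processing the nodes in order of nondecreasing depth guarantees that when $K$ is treated, the values $d_H(u,C)$ for $u \in S_K$ (whose top cliques are proper ancestors of $K$) are already known, so both $g(K)$ and the distances of all vertices introduced at $K$ are obtained in ${\cal O}(|K|)$ time. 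Summing over the cliques yields ${\cal O}(w(T))$ overall.

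It remains to produce, within the same sweep, a gate $v^*$ for each $v \notin C$. I would propagate gates through separators: if $d_H(v,C) \leq 1$, set $v^* := v$ (which is adjacent to all of $C$, hence to all nearest vertices); and if $v$ is introduced at $K$ with $d_H(v,C) = 1 + g(K) \geq 2$, set $v^*$ to be the gate already computed for some $u^* \in S_K$ with $d_H(u^*,C) = g(K)$. Each assignment is ${\cal O}(1)$, preserving the ${\cal O}(w(T))$ bound, and the existence of a genuine gate is guaranteed by Lemma~\ref{lem:gated-clique}. The main obstacle is the correctness of this last step: one must show that the single propagated vertex $v^*$ is simultaneously at distance $d_H(v,C)-1$ from $v$ and adjacent to \emph{every} vertex of $C_v := \{ c \in C \mid d_H(v,c) = d_H(v,C) \}$. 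The distance bound is immediate, since $d_H(v,u^*) = 1$ and, inductively, $d_H(u^*,v^*) = g(K)-1$. The delicate part is the adjacency to all nearest vertices, because a priori distinct vertices of $C_v$ could be ``served'' by distinct vertices of $S_K$. Here I would exploit that minimal separators of a chordal graph are cliques: since $S_K$ is a clique and $C$ is a clique, an exchange argument using the short chords forced on the (short) cycle formed by two competing shortest paths—anchored at two adjacent separator vertices and two adjacent vertices of $C$—shows that the chosen $u^*$ already realizes distance $g(K)$ to every $c \in C_v$, so by induction its gate is adjacent to all of $C_v$. This reduces the statement to a downward induction on $T$ (equivalently, on $d_H(\cdot,C)$), and together with the two linear-time sweeps above it gives the claimed ${\cal O}(w(T))$ running time.
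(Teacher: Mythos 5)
The paper does not actually prove this lemma: it is imported verbatim from \cite{DuD19+}, so there is no in-paper argument to compare yours against. Your two preprocessing sweeps are sound. Rooting $T$ at a maximal clique containing $C$, computing $top(v)$ in one pass, and deriving $d_H(v,C) = 1 + g(K)$ for every $v$ introduced at a non-root node $K$ (where $g(K) = \min_{u \in S_K} d_H(u,C)$) is a correct ${\cal O}(w(T))$-time computation of all the distances, since $S_K$ is contained in $K$ (so $v$ is adjacent to all of it) and separates $v$ from $C \setminus S_K$.

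The gate propagation, however, has a genuine gap, and it sits exactly where you flag ``the delicate part''. Your correctness claim is that an arbitrarily chosen $u^* \in S_K$ with $d_H(u^*,C) = g(K)$ satisfies $d_H(u^*,c) = g(K)$ for \emph{every} $c \in C$ with $d_H(v,c) = d_H(v,C)$. This is false. Take $V(H) = \{c_1,c_2,u_1,u_2,v\}$ with edges $c_1c_2$, $u_1u_2$, $c_1u_1$, $c_1u_2$, $c_2u_2$, $u_1v$, $u_2v$: this graph is chordal, with clique-tree the path $\{c_1,c_2,u_2\} - \{c_1,u_1,u_2\} - \{u_1,u_2,v\}$. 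For $C = \{c_1,c_2\}$ one gets $S_K = \{u_1,u_2\}$, $g(K)=1$, $d_H(v,C)=2$, and both $c_1$ and $c_2$ are nearest to $v$; yet $d_H(u_1,c_2)=2$, so choosing $u^* = u_1$ (a legal choice, since $d_H(u_1,C)=1$) propagates the gate $u_1$, which is not adjacent to $c_2$ and hence is not a gate of $v$ --- the unique gate is $u_2$. The chord argument you invoke only forces at least one of the two ``crossing'' chords $u_1c_2$, $u_2c_1$ on the $5$-cycle $v,u_1,c_1,c_2,u_2$; it does not tell you which one, so distinct minimizers in $S_K$ can serve distinct subsets of the nearest vertices of $C$. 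To repair the argument you would need to (i) prove that some single $u \in S_K$ attains distance $g(K)$ to every nearest vertex of $C$ simultaneously (this is not obvious, and the gate whose existence Lemma~\ref{lem:gated-clique} guarantees need not lie in $S_K$), and (ii) show how to identify that vertex within the ${\cal O}(|K|)$ budget per node, which seems to require knowing, for each $u \in S_K$, \emph{which} vertices of $C$ realize $d_H(u,C)$ --- information your sweep does not maintain. As written, the algorithm can output a vertex that is not a gate.
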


For every $u,v \in V$ and $k \leq d_H(u,v)$, the set $L_H(u,k,v) = \{ x \in I_H(u,v) \mid d_H(u,x) = k\}$ is called a slice. We also need the following result about slices in chordal graphs:

\begin{lemma}[\cite{ChN84}]\label{lem:slice}
Every slice in a chordal graph is a clique.
\end{lemma}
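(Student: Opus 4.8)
The plan is to show that any two distinct vertices of a slice are adjacent, which is exactly the statement that $L_H(u,k,v)$ is a clique. Fix $x,y \in L_H(u,k,v)$ with $x \neq y$, so that $d_H(u,x)=d_H(u,y)=k$ and $d_H(x,v)=d_H(y,v)=d-k$, where $d=d_H(u,v)$; the endpoint cases $k\in\{0,d\}$ give singleton slices, so I may assume $1\le k\le d-1$. Suppose for contradiction that $xy\notin E$. First I would fix a $u$--$v$ geodesic passing through $x$ (the concatenation of a shortest $u$--$x$ path and a shortest $x$--$v$ path, legitimate since $x\in I_H(u,v)$) and, likewise, a $u$--$v$ geodesic through $y$. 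The strategy is to extract from these two geodesics an induced cycle of length at least four, contradicting chordality.

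To build the cycle, let $w$ be the \emph{last} common vertex of the two chosen $u$-side segments (the $u$--$x$ and $u$--$y$ paths) and let $z$ be the \emph{first} common vertex of the two $v$-side segments (the $x$--$v$ and $y$--$v$ paths). Since $d_H(u,w)$ is determined by the position of $w$ on either $u$-side geodesic, the two trimmed arcs $w\rightsquigarrow x$ and $w\rightsquigarrow y$ have equal length and are internally disjoint by the choice of $w$; symmetrically for the arcs $x\rightsquigarrow z$ and $y\rightsquigarrow z$. Concatenating $w\rightsquigarrow x\rightsquigarrow z$ with the reverse of $y\rightsquigarrow z$ and the reverse of $w\rightsquigarrow y$ yields a closed walk $C$ through $x$ and $y$ of length at least four (using $x\neq y$ and $xy\notin E$, together with $w\neq z$). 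Because each of the four arcs is a shortest path, each is an induced path and hence carries no internal chord; consequently every chord of $C$ must join two distinct arcs. Chordality forces at least one such chord, and a case analysis on which pair of arcs it joins — controlled by the facts that $d_H(u,\cdot)$ is constant and equal to $k$ on $\{x,y\}$ while $d_H(v,\cdot)$ is constant and equal to $d-k$ on $\{x,y\}$ — contradicts either the geodesic property of an arc or the maximality/minimality in the choice of $w$ and $z$. The base case $d=2$, $k=1$ is the clean instance of this argument: there $C$ reduces to the $4$-cycle $u,x,v,y$, which would be an induced $C_4$ since $d_H(u,v)=2$ and $xy\notin E$.

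The main obstacle I anticipate is guaranteeing that $C$ is an honest cycle rather than a self-crossing walk, i.e.\ handling possible intersections \emph{between} the $u$-side arcs and the $v$-side arcs. A preliminary observation controls the within-geodesic case: if a vertex $c$ lies on both the $u$--$x$ arc and the $x$--$v$ arc, then $d_H(u,c)+d_H(c,x)=k$ and $d_H(x,c)+d_H(c,v)=d-k$, whence $d_H(u,c)+d_H(c,v)=d-2\,d_H(x,c)\ge d$ forces $d_H(x,c)=0$, so $c=x$; the same holds for $y$. The remaining, more delicate point is ruling out crossings such as the $u$--$x$ arc meeting the $y$--$v$ arc. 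I would resolve this by choosing, among all admissible pairs of geodesics through $x$ and through $y$, a pair minimizing the total number of vertices used (equivalently, arguing inductively on $d$): any cross-intersection would either produce a strictly shorter configuration to which the inductive hypothesis applies, or it would place a vertex simultaneously at distance $k$ and at distance $d-k$ from the appropriate endpoints in a way contradicting the distance constraints. Once $C$ is verified to be a genuine induced cycle of length $\ge 4$, chordality delivers the contradiction and the lemma follows.
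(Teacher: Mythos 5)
The paper does not prove this lemma at all: it is imported verbatim from \cite{ChN84}, so there is no in-paper argument to compare against, and your proposal has to stand on its own. Your overall strategy (extract a long chordless cycle from two $u$--$v$ geodesics, one through $x$ and one through $y$) is a legitimate and standard line of attack, and the part you worry about most is actually fine: a vertex common to the $u$--$x$ arc and the $y$--$v$ arc would satisfy $d_H(u,c)=k-d_H(c,x)\leq k$ and $d_H(u,c)=d-d_H(c,v)=k+d_H(y,c)\geq k$, forcing $c=x=y$, a contradiction; so cross-intersections are impossible outright and no ``minimize the number of vertices'' or induction on $d$ is needed to make $C$ a genuine cycle. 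Together with your within-geodesic computation and the observation that distances from $u$ are strictly monotone along each arc, $C$ really is a cycle of length at least four.

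The genuine gap is in the sentence ``a case analysis on which pair of arcs it joins \dots{} contradicts either the geodesic property of an arc or the maximality/minimality in the choice of $w$ and $z$.'' That claim is false for the chords that actually matter: an edge $ab$ with $a$ on the arc $w\rightsquigarrow x$, $b$ on the arc $w\rightsquigarrow y$, $a,b\neq w$, and $d_H(u,a)=d_H(u,b)$ (or differing by one) contradicts nothing --- both arcs remain geodesics, $a\neq b$ so neither is a common vertex of the two paths, and the choice of $w$ is untouched. So chordality hands you a chord of $C$, but that chord need not produce a contradiction, and your proof stops exactly where the work begins. To finish, one must iterate: repeatedly replace $C$ by the inner subcycle cut off by the chord closest to $\{x,y\}$ on each side, show the innermost chordless cycle still has length at least four, and in doing so dispose of the chords incident to $x$ or $y$ that jump to the other geodesic (e.g.\ $x$ adjacent to the predecessor $b$ of $y$ with $d_H(u,b)=k-1$). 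Those last chords make $b$ a common neighbour of $x$ and $y$ lying one step farther from $v$ than both, and concluding $xy\in E$ from that is itself a nontrivial fact about chordal graphs (two vertices equidistant from a basepoint with a common neighbour one level farther out must be adjacent) which needs its own induction. None of this is present in the proposal, so as written the proof does not go through.
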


Now, consider the procedure described in Algorithm~\ref{alg:central} in order to compute a central vertex.

\begin{algorithm}
	\caption{Computation of a central vertex.}
	\label{alg:central}
	\footnotesize
	
	\begin{algorithmic}[1]
		\REQUIRE{A strongly chordal graph $H$.}\\
		\STATE{$v \leftarrow$ an arbitrary vertex of $H$}
		\STATE{$u \leftarrow$ a furthest vertex from $v$, {\it i.e.}, $d_H(u,v) = e_H(v)$}
		\STATE{$w \leftarrow$ a furthest vertex from $u$, {\it i.e.}, $d_H(u,w) = e_H(u)$}
		\FORALL{$r \in \{ \left\lceil e_H(u)/2 \right\rceil, \left\lceil (e_H(u)+1)/2 \right\rceil, 1 + \left\lceil e_H(u)/2 \right\rceil \}$}
		\STATE{Set $C := L(w,r,u)$ {\it //$C$ is a clique by Lemma~\ref{lem:slice}}}
		\FORALL{$v \notin C$}
		\STATE{Compute $d_H(v,C)$ and a corresponding gate $v^*$ {\it //whose existence follows from Lemma~\ref{lem:gated-clique}}}
		\ENDFOR
		\STATE{Set $S := \{ v^* \mid d_H(v,C) = r\}$ {\it //gates of vertices at max. distance from $C$}}
		\FORALL{$c \in C$}
		\IF{$S \subseteq N_H(c)$}
		\RETURN{$c$}
		\ENDIF
		\ENDFOR
		\ENDFOR
	\end{algorithmic}
\end{algorithm}

\begin{lemma}[special case of Theorem 5 in~\cite{DuD19+}]\label{lem:alg-central}
Algorithm~\ref{alg:central} outputs a central vertex of $H$.
\end{lemma}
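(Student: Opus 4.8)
The plan is to exploit that a strongly chordal graph is dually chordal (Lemma~\ref{lem:strongly-chordal}), hence a Helly graph, so that its eccentricity function is unimodal: by Lemma~\ref{lem:unimodal}, $e_H(x) = d_H(x,C(H)) + rad(H)$ for every vertex $x$. It therefore suffices to output a single vertex of $C(H)$. The algorithm does so by a double sweep (the vertices $u$ and $w$), followed by the inspection of a ``middle slice'' of a shortest $u$--$w$ path together with a domination test inside it. First I would record the two facts that make this slice tractable: by Lemma~\ref{lem:slice} the slice $C = L_H(w,r,u)$ is a clique, and by Lemma~\ref{lem:gated-clique} this clique is gated, so the gate $v^*$ computed in the inner loop is well defined for every $v \notin C$.

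The structural heart of the argument is to locate a central vertex inside the correct slice. Let $c_u \in C(H)$ be a center nearest to $u$, so $d_H(u,c_u) = e_H(u) - rad(H)$ by unimodality, and recall $w$ is a farthest vertex from $u$, i.e. $d_H(u,w) = e_H(u)$. Then
\[
e_H(u) = d_H(u,w) \leq d_H(u,c_u) + d_H(c_u,w) \leq \big(e_H(u) - rad(H)\big) + e_H(c_u) = e_H(u),
\]
using $e_H(c_u) = rad(H)$. Equality throughout forces $c_u \in I_H(u,w)$ and $d_H(w,c_u) = rad(H)$; that is, the nearest center to $u$ lies on a shortest $u$--$w$ path at distance exactly $rad(H)$ from $w$, hence $c_u \in L_H(w,rad(H),u)$. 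Consequently, for the radius $r = rad(H)$ the clique $C = L_H(w,r,u)$ contains a central vertex, and the three candidate values of $r$ tested by the algorithm are there precisely to capture $r = rad(H)$ despite the parity of $e_H(u)$ and the uncertainty on its exact value.

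Next I would verify that, for $r = rad(H)$, the domination test is both complete and sound. For completeness, since $c_u$ is central we have $d_H(x,c_u) \leq rad(H)$ for every $x$, so $\max_x d_H(x,C) \leq rad(H) = r$; moreover every $x$ with $d_H(x,C) = r$ satisfies $d_H(x,c_u) = r = d_H(x,C)$, so $c_u$ is a closest point of $C$ to $x$, and by the very definition of a gate, $x^* \in N_H(c_u)$. Thus $S \subseteq N_H(c_u)$ and the test succeeds (when $S = \emptyset$ it succeeds trivially, and the first returned $c \in C$ still satisfies $e_H(c) \le \max_x d_H(x,C)+1 \le r$). For soundness, if some $c \in C$ passes the test then, using that $C$ is a clique and that the gate of a farthest vertex lies at distance $r-1$ from it, one gets $d_H(c,x) \leq r$ for every $x$ with $d_H(x,C) \leq r$; combined with $\max_x d_H(x,C) \leq r$ this gives $e_H(c) \leq r = rad(H)$, so $c$ is central.

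The main obstacle is to guarantee that $r = rad(H)$ is actually among the three tested radii, and that the loop does not terminate prematurely on a smaller radius. This reduces to a lower bound on the double-sweep eccentricity, namely $e_H(u) \geq 2\,rad(H) - O(1)$. Here I would use that $u$ is a farthest vertex from the arbitrary start $v$: writing $c_v$ for a center nearest to $v$, the same equality computation shows $d_H(u,c_v) = rad(H)$, so $u$ lies at distance $rad(H)$ from a center. Together with $d_H(u,C(H)) \geq rad(H) - diam(H[C(H)])$ and the small diameter of the center of a Helly graph, this forces $e_H(u) = d_H(u,C(H)) + rad(H)$ to be within an additive constant of $2\,rad(H)$, equivalently of $diam(H)$ via the standard Helly relation $diam(H) \geq 2\,rad(H) - 1$. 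Pinning down this constant --- ideally showing $e_H(u) \in \{2\,rad(H)-1, 2\,rad(H)\}$, so that $r = rad(H)$ is even the first candidate tried --- and the accompanying parity bookkeeping is the delicate part; everything else is the routine clique-and-gate distance estimates sketched above.
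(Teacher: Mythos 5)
First, note that the paper offers no proof of this lemma: it is imported verbatim as a special case of Theorem~5 in~\cite{DuD19+}, so there is no in-paper argument to compare yours against; what follows is an assessment of your reconstruction on its own terms. The core of your sketch is sound and is what such a proof must look like. The identity $e_H(x)=d_H(x,C(H))+rad(H)$ from Lemma~\ref{lem:unimodal}, combined with the triangle-inequality chain you write, correctly places a central vertex $c_u$ on a shortest $u$--$w$ path at distance exactly $rad(H)$ from $w$, i.e.\ in the slice $L(w,rad(H),u)$; Lemmas~\ref{lem:slice} and~\ref{lem:gated-clique} then make the completeness of the test at $r=rad(H)$ go through exactly as you say. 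Your plan for lower-bounding the double-sweep eccentricity also works: one shows $d_H(u,c_v)=rad(H)$ for a center $c_v$ nearest to the start vertex $v$, and since the center induces a subgraph of radius at most one (Lemma~\ref{lem:center-strongly-chordal}), hence of diameter at most two in $H$, one gets $e_H(u)\geq 2\,rad(H)-2$, which indeed forces $rad(H)\in\{\lceil e_H(u)/2\rceil,\ \lceil e_H(u)/2\rceil+1\}$, both of which are among the three tested values.

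The genuine gap is the soundness of the test at the candidate radii \emph{smaller} than $rad(H)$, i.e.\ at $r=\lceil e_H(u)/2\rceil$ when $rad(H)=\lceil e_H(u)/2\rceil+1$. Your soundness argument explicitly invokes ``combined with $\max_x d_H(x,C)\leq r$'', but that inequality is only established when $C$ contains a central vertex, which is exactly the case $r=rad(H)$. For a smaller $r$ the slice $C=L(w,r,u)$ need not dominate the graph at distance $r$: there may be vertices $x$ with $d_H(x,C)>r$, and these contribute nothing to $S$ (which by its definition in Algorithm~\ref{alg:central} only collects gates of vertices at distance \emph{exactly} $r$), so the test $S\subseteq N_H(c)$ can succeed --- even vacuously, when $S=\emptyset$ --- for a vertex $c$ that is not central, and the algorithm would return it before ever reaching $r=rad(H)$. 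Closing this requires either proving $\max_x d_H(x,L(w,r,u))\leq r$ for every tested $r$, or showing that the test cannot succeed when $r<rad(H)$; neither follows from the ``routine clique-and-gate distance estimates'' you defer to, and a direct Helly computation only yields $d_H(x,C)\leq\max(e_H(w)-r,\,r)$, which can exceed $r$ because $e_H(w)$ may be as large as $2\,rad(H)>2r$. You flag this as ``the delicate part,'' and rightly so: it is the substance of the cited Theorem~5 of~\cite{DuD19+}, and it is the one step your sketch does not actually supply.
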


\begin{lemma}\label{lem:impl-alg-central}
If $T$ is a clique-tree of a strongly chordal graph $H$ then, we can implement Algorithm~\ref{alg:central} in order to run in ${\cal O}(w(T))$ time, where $w(T)$ denotes the sum of cardinalities of all the maximal cliques of $H$. 
\end{lemma}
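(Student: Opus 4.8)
The plan is to show that every individual instruction of Algorithm~\ref{alg:central} can be carried out using \emph{only} the clique-tree $T$, since we cannot afford to build the adjacency list of $H$. Throughout, I would rely on the \emph{clique-vertex incidence graph} $B$ of $H$: the bipartite graph on the vertices of $H$ and the nodes of $T$, with an edge between $x$ and $K$ whenever $x \in K$. It is built from $T$ in ${\cal O}(w(T))$ time and has ${\cal O}(w(T))$ edges, and since two vertices of $H$ are adjacent exactly when they lie in a common maximal clique, a shortest $xy$-path of length $\ell$ in $H$ corresponds to a shortest $xy$-path of length $2\ell$ in $B$; hence $d_B = 2\,d_H$ on pairs of $H$-vertices. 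In particular one breadth-first search in $B$ yields, in ${\cal O}(w(T))$ time, all distances from a given vertex, and so its eccentricity. This already handles the choices of $v$, $u$ and $w$ in lines 1--3 (three searches, from which I keep the distance arrays from $u$ and from $w$). Each slice $C = L(w,r,u)$ of line 5 is then read off in ${\cal O}(n)$ time using $d_H(w,u) = e_H(u)$, and all the gates of line 7 together with the distances $d_H(v,C)$ are computed in ${\cal O}(w(T))$ time by Lemma~\ref{lem:compute-gate}; the set $S$ of line 9 follows by one more scan.

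The delicate step, and the one I expect to be the real obstacle, is the innermost test of lines 10--13: deciding, for every $c \in C$, whether $S \subseteq N_H(c)$, in total ${\cal O}(w(T))$ time and without access to the neighbourhoods of $H$. Here I would exploit that $C$ is a clique (Lemma~\ref{lem:slice}), hence contained in some maximal clique $K_0$, which I locate as a node of $T$ by finding, in one pass over the incidences, a clique $K$ with $|K \cap C| = |C|$. Root $T$ at $K_0$. The crucial structural observation is that for every $c \in C$ the subtree $T_c$ of cliques containing $c$ is then \emph{ancestor-closed}: it is connected and contains the root $K_0 \ni c$, so for any $K \in T_c$ the whole $K$-to-root path lies in $T_c$. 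Consequently, writing $t(s)$ for the topmost (minimum-depth) node of the connected subtree $T_s$, for $c \in C$ and $s \in S$ one has $s \in N_H(c)$ if and only if $c \in t(s)$: indeed $s \sim c$ iff $T_s$ meets $T_c$, and since $t(s)$ is an ancestor of every node of $T_s$ while $T_c$ is ancestor-closed, the two subtrees meet iff $t(s) \in T_c$, i.e. iff $c \in t(s)$ (note $s \notin C$, so $s \neq c$). Therefore $S \subseteq N_H(c)$ is equivalent to $c \in \bigcap_{s \in S} t(s)$, and finding a valid $c$ reduces to finding a vertex of $C$ lying in all the top-cliques $t(s)$.

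It remains to carry out this reduction efficiently. After rooting $T$ and computing node depths in ${\cal O}(n)$ time, I obtain $t(s)$ for all $s$ at once by scanning every incidence $(K,x)$ once and keeping, for each vertex, the minimum-depth clique containing it; this is ${\cal O}(w(T))$. Let ${\cal T}$ be the set of \emph{distinct} cliques arising as some $t(s)$. For each $K \in {\cal T}$ I increment a counter $\mathrm{cnt}(x)$ for every $x \in K$; since $\sum_{K \in {\cal T}} |K| \leq w(T)$, this costs ${\cal O}(w(T))$, and $x \in \bigcap_{s \in S} t(s)$ exactly when $\mathrm{cnt}(x) = |{\cal T}|$. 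A final scan of $C$ reports whether some $c$ passes the test of line 11 and returns one. All of this is repeated for the three candidate radii of line 4, a constant number of times, and every operation above is bounded by ${\cal O}(w(T))$ (using $w(T) \geq n$, as each vertex lies in at least one maximal clique). Since the returned vertex satisfies the condition of line 11, it is central by Lemma~\ref{lem:alg-central}, which gives the desired ${\cal O}(w(T))$-time implementation.
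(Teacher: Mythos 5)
Your implementation is correct, and for most of the lemma it coincides with the paper's: breadth-first searches in the clique-vertex incidence graph for lines 1--3 and 5, and Lemma~\ref{lem:compute-gate} for the gates in lines 6--7. Where you genuinely diverge is the innermost test of lines 10--13, which is the only nontrivial part. The paper keeps, for each $c \in C$, a counter for $|N_H(c) \cap S|$ and fills it by a top-down traversal of $T$: at each clique $K$ with parent-separator $K'$, vertices of $C$ first appearing in $K$ receive $|K \cap S|$ while vertices already present in $K'$ receive only $|S \cap (K \setminus K')|$, the connectedness of the subtrees $T_s$ guaranteeing that no gate is double-counted; the test then reduces to checking $g(c) = |S|$. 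You instead root $T$ at a maximal clique $K_0 \supseteq C$, observe that each $T_c$ ($c \in C$) is then ancestor-closed, and deduce the clean characterization $s \in N_H(c) \Leftrightarrow c \in t(s)$, where $t(s)$ is the top node of $T_s$ (your verification that gates lie outside $C$, so $s \neq c$, is the one point that needed checking and it holds since $d_H(v,v^*) \leq r-1 < d_H(v,C)$). This turns the test into computing $\bigcap_{K \in {\cal T}} K$ over the distinct top-cliques, which your counting scheme does in ${\cal O}(w(T))$ since ${\cal T}$ consists of distinct nodes of $T$. Both routes meet the bound; the paper's counter works for an arbitrary rooting of $T$ and extends directly to the analogous counting steps reused later (e.g.\ in Proposition~\ref{prop:center}), whereas yours exploits the special structure of $C$ being a clique to get a characterization of adjacency that avoids any inclusion--exclusion against parent separators, arguably making the correctness argument more transparent.
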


\begin{proof}
Lines 1--3 require executing breadth-first searches in $H$. It can be done in ${\cal O}(w(T))$ time by executing breadth-first searches in the clique-vertex incidence graph $I_H$ of $H$. Note that we can compute $I_H$ in ${\cal O}(w(T))$ time from the clique-tree $T$. Now, consider any of the at most three executions of the for loop starting at Line 4. The subset $C$ at Line 5 can also be computed in ${\cal O}(w(T))$ time using breadth-first searches in $I_H$. Then, for implementing the computation of all the gates, at Line 6--7, we call Lemma~\ref{lem:compute-gate}. We are left explaining how to implement the internal for loop, starting at Line 9, so that it runs in total ${\cal O}(w(T))$ time. For that, we assign a counter $g(c)$ for every $c \in C$ (initially equal to $0$), whose final value must be $|N_H(c) \cap S|$. Doing so, the test at Line 10 boilds down to verifying whether $g(c) = |S|$. In order to correctly set the values of the counters $g(c), \ c \in C$, we root the clique-tree $T$ arbitrarily and then we perform a breadth-first search of this tree $T$ starting from the root. For each maximal clique $K$ of $H$, let $K'$ be the common intersection with its father node in $T$ (in particular, $K' = \emptyset$ if $K$ is the root). For every $c \in C \cap (K \setminus K')$, we increase the counter $g(c)$ by exactly $|K \cap S|$. However, for every $c \in C \cap K'$, we only increase the counter $g(c)$ by $|S \cap (K \setminus K')|$; indeed, for these vertices, the contribution of the gates in $S \cap K'$ was already counted earlier during the breadth-first search. Overall, we just need to scan each maximal clique of $H$ once, and so, the running time is in ${\cal O}(w(T))$, as desired.  
\end{proof}

Then, given a central vertex $c$ of $H$, we explain how to compute $C(H)$ by local search in the neighbourhood at distance two around $c$. For that, we need one more structural result about the center of strongly chordal graphs, namely:

\begin{lemma}[\cite{Dra89,Dra93}]\label{lem:center-strongly-chordal}
If $H$ is strongly chordal then, its center $C(H)$ induces a strongly chordal graph of radius $\leq 1$. 
\end{lemma}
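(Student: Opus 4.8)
The plan is to treat the two conclusions separately. That $C(H)$ induces a strongly chordal graph is immediate: being chordal and being sun-free are both hereditary properties, so the induced subgraph $H[C(H)]$ is again strongly chordal. In particular, by Lemma~\ref{lem:strongly-chordal} together with the paper's definition of dually chordal graphs, both $H$ and $H[C(H)]$ are Helly graphs. All the real work lies in proving $rad(H[C(H)]) \le 1$, i.e., in producing a single central vertex adjacent in $H$ to every other central vertex.

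First I would reformulate the goal through the Helly property. Writing $r := rad(H)$, a vertex is central exactly when it is within distance $r$ of every vertex, so $C(H) = \bigcap_{v \in V} N_H^r[v]$. Consequently a dominating vertex of $H[C(H)]$ is precisely a common point of the family $\mathcal{B} = \{ N_H^r[v] \mid v \in V \} \cup \{ N_H[c] \mid c \in C(H) \}$: such a point lies in every $N_H^r[v]$, hence is central, and lies in every $N_H[c]$, hence dominates $C(H)$. Since $H$ is a Helly graph, it suffices to check that $\mathcal{B}$ is pairwise intersecting. Any two balls $N_H^r[v], N_H^r[v']$ share every central vertex; any pair $N_H^r[v], N_H[c]$ with $c$ central contains $c$ itself (as $d_H(c,v) \le r$); so the only nontrivial pairs are $N_H[c] \cap N_H[c']$ with $c, c' \in C(H)$, which is nonempty exactly when $d_H(c,c') \le 2$.

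Thus the entire lemma reduces to the single metric statement that the center of a strongly chordal graph has diameter at most two, and I expect this to be the main obstacle. Interestingly, the Helly property alone already settles the distance-two case: given $c, c' \in C(H)$ with $d_H(c,c') = 2$, the family $\{N_H[c], N_H[c']\} \cup \{N_H^r[v] \mid v \in V\}$ is pairwise intersecting (checked exactly as above, using that $c,c'$ have a common neighbour), so the Helly property of $H$ yields a common point, which is necessarily a \emph{central} common neighbour of $c$ and $c'$. What remains is to rule out central pairs at distance at least three, and here I would genuinely need strong chordality rather than mere Helly-ness: the $3$-sun is chordal (but not strongly chordal) with all its vertices central and no dominating vertex, so both ingredients are provably necessary.

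The crux is therefore the argument, by contradiction, that a central pair $c, c'$ with $d_H(c,c') \ge 3$ cannot exist in a strongly chordal graph. My plan is to use unimodality of the eccentricity function (Lemma~\ref{lem:unimodal}) to control eccentricities along a shortest $cc'$-path, together with the facts that slices, and hence the relevant sets of midpoints, are cliques (Lemma~\ref{lem:slice}) and that these cliques are gated (Lemma~\ref{lem:gated-clique}); combining the gates of far vertices witnessing the large eccentricities with the pairwise-intersecting unit balls centred at $c$ and $c'$, I would extract a forbidden induced sun, contradicting strong chordality. This sun-freeness contradiction is the hard part; once the center is shown to have diameter at most two, the Helly reduction of the second paragraph closes the proof of $rad(H[C(H)]) \le 1$.
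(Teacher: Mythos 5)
The paper does not prove this lemma at all: it is imported verbatim from Dragan's work \cite{Dra89,Dra93}, so there is no in-paper argument to compare against and your attempt must be judged on its own. Its first two-thirds are correct and cleanly organized. Heredity of chordality and sun-freeness gives that $H[C(H)]$ is strongly chordal; Lemma~\ref{lem:strongly-chordal} together with the paper's definition of dually chordal graphs gives that $H$ is Helly; and your Helly reduction --- a common point of the pairwise-intersecting family $\{N_H^r[v]\}_{v\in V}\cup\{N_H[c]\}_{c\in C(H)}$ is a central vertex adjacent to every other central vertex --- validly reduces the bound $rad(H[C(H)])\leq 1$ to the single claim that any two central vertices of a strongly chordal graph are at distance at most $2$ in $H$. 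The pairwise-intersection checks are all right, and the $3$-sun remark correctly locates where strong chordality (rather than chordality or Helly-ness alone) must be used.

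The gap is that this remaining claim is the entire substance of the lemma, and you do not prove it. What you give is a plan (``use unimodality, slices are cliques and gated, extract a forbidden induced sun''), with no specification of which vertices would form the sun, what its size is, why the required non-edges are present, or why the extraction terminates; producing an induced $k$-sun from metric hypotheses is exactly the delicate case analysis that constitutes Dragan's proof, and declaring that one ``would'' carry it out is not a proof. Moreover, it is not clear that Lemma~\ref{lem:unimodal} helps in the way you suggest: unimodality relates $e_H(w)$ to $d_H(w,C(H))$, but an intermediate vertex of a shortest path between two central vertices may itself be central or adjacent to the center, so unimodality places no evident constraint on such a path. As written, the proposal establishes the easy reduction and leaves the hard step --- the only step in which strong chordality actually does work --- unproven.
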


We also need the following nice characterization of strongly chordal graphs, namely:

\begin{lemma}[\cite{Bra91,FaM83}]\label{lem:incidence}
$H$ is strongly chordal if and only if its clique-vertex incidence graph $I_H$ is chordal bipartite.
\end{lemma}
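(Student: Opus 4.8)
The plan is to prove both implications by contraposition, using a dictionary between the induced cycles of $I_H$ and two forbidden substructures of $H$. Recall that a bipartite graph is chordal bipartite exactly when it has no induced cycle of length at least six, while $H$ is strongly chordal exactly when it is chordal and has no induced $n$-sun ($n \geq 3$). I would therefore establish that $I_H$ contains an induced cycle of length $\geq 6$ if and only if $H$ contains a hole (an induced cycle of length $\geq 4$) or an induced $n$-sun.

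For the direction ``$H$ strongly chordal $\Rightarrow I_H$ chordal bipartite'', I argue contrapositively and build an explicit long induced cycle in $I_H$ from a forbidden structure of $H$. If $H$ has a hole $u_1u_2\cdots u_ku_1$ with $k \geq 4$, let $K_i$ be a maximal clique containing the edge $u_iu_{i+1}$ (indices mod $k$); then $u_1,K_1,u_2,\ldots,u_k,K_k,u_1$ is a cycle of length $2k \geq 8$ in $I_H$, and it is induced because $u_i \in K_j$ would force $u_i$ adjacent to both $u_j$ and $u_{j+1}$, which on an induced cycle leaves only $j \in \{i-1,i\}$. If instead $H$ contains an induced $n$-sun with rim $u_1,\ldots,u_n$ and independent tips $w_1,\ldots,w_n$, where $w_i$ is adjacent to exactly $u_i$ and $u_{i+1}$, I extend each triangle $\{u_i,u_{i+1},w_i\}$ to a maximal clique $K_i$ and claim that $u_1,K_1,u_2,\ldots,u_n,K_n,u_1$ is an induced cycle of length $2n \geq 6$: indeed $u_i \in K_{i-1}\cap K_i$, whereas $u_i \in K_j$ would give $u_i \sim w_j$ and hence $i \in \{j,j+1\}$; and the tips certify that the $K_i$ are pairwise distinct, since $w_i \in K_i$ while $w_i \in K_j$ for $j \neq i$ would force $w_i \sim w_j$, contradicting the independence of the tips. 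In both cases $I_H$ is not chordal bipartite.

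The converse ``$I_H$ chordal bipartite $\Rightarrow H$ strongly chordal'' is the main obstacle. Starting from an induced cycle $v_1,C_1,v_2,\ldots,v_k,C_k,v_1$ of length $2k \geq 6$ in $I_H$, the rim $u_i := v_i$ already satisfies $u_i \sim u_{i+1}$ (the two share the clique $C_i$), so the $u_i$ trace a closed walk in $H$; if this walk has a chord one recovers a shorter hole, and otherwise it is a genuine hole when $k \geq 4$. The delicate case is when the $u_i$ induce a clique (in particular $k=3$): here I must extract from the private parts $C_i \setminus (C_{i-1}\cup C_{i+1})$ an independent family of tips $w_i$, each adjacent to $u_i,u_{i+1}$ but to no other rim vertex nor to any other tip, so as to exhibit an induced $n$-sun. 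Proving that such tips exist and are pairwise non-adjacent is where the argument becomes subtle, and I expect it to require taking the cycle of $I_H$ of minimum length and exploiting the maximality and pairwise distinctness of the $C_i$.

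This last step is precisely the content of the classical results of Farber and of Brandst\"adt~\cite{Bra91,FaM83}, which route the equivalence through the theory of totally balanced matrices: the clique–vertex incidence matrix of $H$ is totally balanced---equivalently, admits a $\Gamma$-free ordering, equivalently its bipartite graph $I_H$ has no induced cycle of length $\geq 6$---if and only if $H$ is strongly chordal, the $\Gamma$-free ordering being derived from a strong elimination ordering of $H$. I would follow that route to make the hard direction fully rigorous, rather than pushing the explicit sun-extraction through by hand.
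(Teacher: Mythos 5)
The paper offers no proof of this lemma: it is imported as a black box from the cited classical results \cite{Bra91,FaM83}, so your proposal already does strictly more than the paper by making one half of the equivalence explicit. That half is correct: extending each edge of a hole, or each triangle $\{u_i,u_{i+1},w_i\}$ of an induced sun, to a maximal clique $K_i$ does produce an induced cycle of length $2k\geq 8$ (resp.\ $2n\geq 6$) in $I_H$, and your checks are the right ones --- $u_i\in K_j$ forces $u_i$ to be adjacent to (or equal to) both $u_j$ and $u_{j+1}$, hence $j\in\{i-1,i\}$, which also yields pairwise distinctness of the $K_j$; in the sun case the tips certify distinctness and the inducedness of the sun is what bounds the neighbours of $w_j$ among the rim. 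One genuine slip: you have swapped the labels of the two directions. Building a long induced cycle of $I_H$ out of a forbidden structure of $H$ proves ``$H$ not strongly chordal $\Rightarrow$ $I_H$ not chordal bipartite'', which is the contrapositive of ``$I_H$ chordal bipartite $\Rightarrow$ $H$ strongly chordal'', not of the direction you announce; symmetrically, the extraction of a hole or sun from a long induced cycle of $I_H$ is what would prove ``$H$ strongly chordal $\Rightarrow$ $I_H$ chordal bipartite''. The extraction itself --- the half you defer --- is indeed the substantive content of the cited theorems, and routing it through totally balanced matrices and $\Gamma$-free orderings is the standard way to close it; since the paper itself simply cites \cite{Bra91,FaM83} with no argument, relying on them for that direction leaves you no worse off than the source, but be aware that as written your text proves neither direction under the name you give it.
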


\begin{proposition}\label{prop:center}
If $T$ is a clique-tree of a strongly chordal graph $H = (V,E)$ then, we can compute its center $C(H)$ in ${\cal O}(w(T))$ time. 
\end{proposition}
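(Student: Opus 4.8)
The plan is to first isolate a single central vertex together with the radius, then to localize the whole center to a small ball, and finally to certify centrality of the candidates in one amortized sweep over the clique-tree, never computing an eccentricity per candidate by brute force.

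Concretely, I would begin by running Algorithm~\ref{alg:central} on $H$; by Lemma~\ref{lem:alg-central} this returns a central vertex $c$, and a breadth-first search from $c$ in the clique-vertex incidence graph $I_H$ (built from $T$ in ${\cal O}(w(T))$ time, exactly as in the proof of Lemma~\ref{lem:impl-alg-central}) yields $rad(H) = e_H(c) =: r$. The whole of Algorithm~\ref{alg:central} costs ${\cal O}(w(T))$ by Lemma~\ref{lem:impl-alg-central}. Next I would invoke Lemma~\ref{lem:center-strongly-chordal}: since $H[C(H)]$ is strongly chordal of radius at most one, it has a dominating vertex $x$ and diameter at most two. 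In particular $C(H) \subseteq N_H^2[c]$ and every central vertex lies in $N_H[x]$, which confines the search for $C(H)$ to the two-neighbourhood of $c$.

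It then remains to decide, among the candidates in $N_H^2[c]$, which ones have eccentricity exactly $r$. Here I would use the unimodality of the eccentricity function on Helly graphs (Lemma~\ref{lem:unimodal}, which applies because a strongly chordal graph is dually chordal by Lemma~\ref{lem:strongly-chordal}, hence Helly): a candidate $v$ is central if and only if $e_H(v) = r$, equivalently the ball $N_H^r[v]$ covers $V$. Rather than certifying each candidate by a separate breadth-first search, I would reuse and extend the slice $C$ (a clique by Lemma~\ref{lem:slice}) and the gate set $S$ already produced inside Algorithm~\ref{alg:central}. That algorithm returns a single $c \in C$ with $S \subseteq N_H(c)$; instead I would collect \emph{all} vertices of $N_H^2[c]$ passing the adjacency test $S \subseteq N_H[\cdot]$, computing the relevant gates through Lemma~\ref{lem:compute-gate} and the gatedness of cliques (Lemma~\ref{lem:gated-clique}), and evaluating the test for every candidate simultaneously via the counter-based rooted clique-tree traversal of Lemma~\ref{lem:impl-alg-central}. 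Collecting the marked vertices yields $C(H)$.

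The main obstacle is the running-time bound: $N_H^2[c]$ may contain $\Theta(n)$ vertices, so a per-candidate eccentricity computation would be hopelessly expensive, and one must certify the whole batch in total ${\cal O}(w(T))$ time. I expect to resolve this exactly along the lines of Lemma~\ref{lem:impl-alg-central}: the radius-at-most-one structure of the centre (Lemma~\ref{lem:center-strongly-chordal}) forces all surviving candidates to share the dominating vertex $x$, and hence the far vertices they must reach, so the whole family of coverage tests reduces to incrementing clique-indexed counters during a single rooted breadth-first search of $T$ (respectively of $I_H$, which is chordal bipartite by Lemma~\ref{lem:incidence}), each maximal clique being scanned only once. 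The delicate point to verify is the correctness of this batched test, namely that the gate set read off from the slice $C$ certifies centrality for \emph{every} vertex of $C(H)$ while excluding every vertex of $N_H^2[c] \setminus C(H)$; this is where the unimodality of Lemma~\ref{lem:unimodal} together with the convexity of balls in Helly graphs must be combined, and it is the step I would expect to require the most care.
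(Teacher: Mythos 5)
Your first two steps match the paper exactly: run Algorithm~\ref{alg:central} (via Lemma~\ref{lem:impl-alg-central}) to get a central vertex $c$ and $r = rad(H)$, then use Lemma~\ref{lem:center-strongly-chordal} to confine $C(H)$ to $N_H^2[c]$. The gap is in the certification step. You propose to reuse the slice $C = L(w,r,u)$ and the gate set $S$ from inside Algorithm~\ref{alg:central} and to mark every candidate $x \in N_H^2[c]$ with $S \subseteq N_H[x]$. This does not characterize $C(H)$. The gates in $S$ are gates \emph{with respect to the clique $C$}: for $v$ with $d_H(v,C)=r$, the gate $v^*$ guarantees $d_H(v,x) \leq r$ only for $x \in C$ adjacent to $v^*$; for a candidate $x \notin C$ adjacent to $v^*$ you only get $d_H(v,x) \leq d_H(v,v^*) + 2 = r+1$. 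Moreover $S$ only accounts for vertices at distance exactly $r$ from the clique $C$, so passing the test does not bound the distance to the remaining vertices, and failing it does not exclude centrality. The test $S \subseteq N_H(\cdot)$ is a sufficient condition for centrality of a vertex \emph{inside the slice $C$} (that is all Lemma~\ref{lem:alg-central} needs), not a necessary-and-sufficient condition over $N_H^2[c]$. Your appeal to unimodality here is also circular: Lemma~\ref{lem:unimodal} expresses $e_H(v)$ in terms of $d_H(v,C(H))$, which is the set you are trying to compute.

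The paper's proof does something genuinely different at this point. For each far vertex $v$ (at distance $r-1$ or $r$ from $c$) it constructs a fresh representative: the gate $v^*$ of $v$ with respect to the gated set $N_H[c]$ when $v \in A_r$, and, when $v \in A_{r-1}$, a vertex $v'$ obtained by a Helly argument (equivalently, a neighbour of $v^*$ maximizing $|N_H[y] \cap N_H[c]|$, found by another counter-based clique-tree sweep). These representatives satisfy the exact equivalence $d_H(x,\text{rep}) \leq 2 \iff d_H(x,v) \leq r$ for the relevant candidates, because $N_H[c] \cap N_H^{r-1}[v] \subseteq N_H[\text{rep}]$ and conversely the closed neighbourhood of any central vertex must meet $N_H[c] \cap N_H^{r-1}[v]$ (Helly property applied to the pairwise-intersecting balls $N_H[c']$, $c' \in C(H)$). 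The resulting condition is ``distance at most $2$ from every $b \in B$,'' which is a distance-$2$ test, not an adjacency test: a vertex at distance $2$ from $b$ shares no maximal clique with $b$, so your single counter sweep over the clique-tree cannot evaluate it. The paper instead solves this final step with the partition-refinement technique of Lemma~\ref{lem:cst-ecc} (with $k=2$) applied to the clique-vertex incidence graph $I_H$, which is chordal bipartite by Lemma~\ref{lem:incidence}. (It also handles $r \leq 2$ separately by the same technique, a case your sketch omits.) To repair your argument you would need both the per-vertex representatives with the distance-$2$ equivalence and the Helly-based batched distance-$2$ test; the adjacency test you describe cannot be patched to do this.
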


\begin{proof}
Let $c \in C(H)$ be a fixed central vertex of $H$. By the combination of Lemmas~\ref{lem:alg-central} and~\ref{lem:impl-alg-central}, it can be computed in ${\cal O}(w(T))$ time. We also compute the clique-vertex incidence graph $I_H$ from $T$, in ${\cal O}(w(T))$ time. Let $r := rad(H)$ (computable in ${\cal O}(w(T))$ time by using a breadth-first search rooted at $c$ in $I_H$). 
Note that if $r \leq 2$, then in order to compute $C(H)$, we are left computing all the vertices of eccentricity at most $r$ in $H$. For that, since by Lemma~\ref{lem:incidence} $I_H$ is chordal bipartite, we can use Lemma~\ref{lem:cst-ecc}. It takes time linear in the size of $I_H$, and so it takes ${\cal O}(w(T))$ time.
From now on, we assume $r \geq 3$. 
By Lemma~\ref{lem:center-strongly-chordal}, $C(H) \subseteq N_H^2[c]$. In particular,  If $v \in V$ is such that $d_H(v,c) \leq r-2$ then it is trivially at a distance $\leq r$ from every vertex of $N_H^2[c]$. Thus, in order to compute $C(H)$ from $N_H^2[c]$, we only need to consider the subsets $A_{r-1} := \{ v \in V \mid d_H(v,c) = r-1\}$ and $A_r := \{ v \in V \mid d_H(v,c) = r \}$. We prove as an intermediate claim that $N_H[c]$ is gated. Indeed, for every $v \notin N_H[c]$, the closest vertices to $v$ in this closed neighbourhood are those in the slice $L(v,d_H(v,c)-1,c)$, and thus they induce a clique by Lemma~\ref{lem:slice}. Then, the claim follows from Lemma~\ref{lem:gated-clique}. Furthermore, for every $v \notin N_H[c]$, we can compute a corresponding gate $v^*$ as follows: we contract in $T$ the subtree $T_c$ (induced by all the maximal cliques containing $c$) to a single node representing $N_H[c]$, then we apply Lemma~\ref{lem:compute-gate}. It takes ${\cal O}(w(T))$ time.

Now, let $v \in A_{r-1} \cup A_r$. If the closed neighbourhood of any vertex $u$ intersects $N_H[c] \cap N_H^{r-1}[v]$ then, $d_H(u,v) \leq r$. Conversely, we claim that the closed neighbourhood of any central vertex must intersect $N_H[c] \cap N_H^{r-1}[v]$. Indeed, observe that for every $c' \in C(H)$, $N_H[c'] \cap N_H^{r-1}[v] \neq \emptyset$. By Lemma~\ref{lem:center-strongly-chordal}, the balls $N_H[c'], \ c' \in C(H)$ also pairwise intersect. Therefore, by the Helly property, there exists a $x \in N^{r-1}_H[v] \cap \left( \bigcap_{c' \in C(H)} N_H[c'] \right)$. We are done as in this situation $x \in N_H[c] \cap N_H^{r-1}[v]$.
We now analyse the following two cases:

\smallskip
{\bf Case $v \in A_r$.} Let $v^*$ be the corresponding gate. Note that $d_H(v^*,c) = d_H(v,N_H[c]) - 1 = d_H(v,c) - 2 = r-2$. In particular, every vertex of $N_H^2[v^*]$ is at a distance $\leq r$ from vertex $v$. Conversely, the closed neighbourhood of every central vertex must intersect $N_H[c] \cap N^{r-1}_H[v] = L(v,r-1,c) \subseteq N_H[v^*]$. As a result, $C(H) \subseteq N_H^2[v^*]$. 
%

\smallskip
{\bf Case $v \in A_{r-1}$.} Let $Z = \{ z \in N_H[c] \mid d_H(v,c) = r-1 \}$. Note that $c \in Z$. Since the balls $N^{r-2}_H[v]$ and $N_H[z], \ z \in Z$ pairwise intersect, by the Helly property there exists a vertex $v' \in N_H^{r-2}[v] \cap \left( \bigcap \{ N_H(z) \mid z \in Z \} \right)$. Observe that $v' \in N_H^{r-2}[v] \cap N_H[c] = L(v,r-2,c)$. In particular, every vertex of $N_H^2[v']$ is at a distance $\leq r$ from $v$. Conversely, since by Lemma~\ref{lem:slice} the set $L(v,r-2,c)$ is a clique, $N_H^{r-1}[v] \cap N_H[c] = L(v,r-2,c) \cup Z \subseteq N_H[v']$. Thus, $C(H) \subseteq N_H^2[v']$.

\smallskip
Note that we may choose as our $v'$ any $y \in N_H(v^*)$ that maximizes $|N_H[y] \cap N_H[c]|$, where $v^*$ is the gate computed for $v$. In order to compute such vertex $v'$, for every $v \in A_{r-1}$, we use a similar trick as for Lemma~\ref{lem:impl-alg-central}. Namely, for every $y \in V$,  we assign a counter $h(y)$ (initially equal to $0$), whose final value must be $|N_H[y] \cap N_H[c]|$. In order to correctly set the values of these counters, we root the clique-tree $T$ arbitrarily and then we perform a breadth-first search of this tree $T$ starting from the root. For each maximal clique $K$ of $H$, let $K'$ be the common intersection with its father node in $T$ (in particular, $K' = \emptyset$ if $K$ is the root). For every $y \in K \setminus K'$, we increase the counter $h(y)$ by exactly $|K \cap N_H[c]|$. However, for every $y \in K'$, we only increase the counter $h(y)$ by $|N_H[c] \cap (K \setminus K')|$. Overall, we just need to scan each maximal clique of $H$ once, and so, the running time is in ${\cal O}(w(T))$.  Finally, we scan each maximal clique $K$ once more and, to every vertex of $K$, we assign a vertex $y \in K$ maximizing $h(y)$. For every $v \in A_{r-1}$, we choose for $v'$ any vertex $y$ assigned to $v^*$ and maximizing $h(y)$. 

\smallskip
Overall, let $B := \{c\} \cup \{ v^* \mid v \in A_r \} \cup \{ v' \mid v \in A_{r-1} \}$. By the above case analysis, $C(H) = \{ x \in V \mid \forall b \in B, \ d_H(b,x) \leq 2 \}$. Since $I_H$ is chordal bipartite (Lemma~\ref{lem:incidence}) we can adapt the technique of Lemma~\ref{lem:cst-ecc} in order to compute $C(H)$ ({\it i.e.}, we set $k=2$, and then compute partitions of the subset $B$ instead of computing partitions for the full half-set $V$). It takes time linear in the size of $I_H$, and so, it can be done in ${\cal O}(w(T))$ time.
\end{proof}

\paragraph{Computation of all the eccentricities in $G$.} Before proving Theorem~\ref{thm:chordal}, we need a final ingredient. Let us first generalize Lemma~\ref{lem:diam-bipartite} as follows.

\begin{lemma}\label{lem:ecc}
If $G=(V_0 \cup V_1,E)$ is an absolute retract of bipartite graphs then, the following holds for every $i \in \{0,1\}$ and $v \in V_i$:
\begin{itemize}
\item If $e_{H_i}(v) \leq rad(H_{1-i}) - 1$ then, $e_G(v) = 2e_{H_i}(v) + 1 = 2rad(H_{1-i}) - 1$.

\item If $e_{H_i}(v) = rad(H_{1-i})$ then, $e_G(v) = 2rad(H_{1-i})$ if and only if $N_G(v) \subseteq C(H_{1-i})$ and, for every $u \in V_{1-i}$, we have $d_{H_{1-i}}(u,N_G(v)) \leq rad(H_{1-i}) - 1$ (otherwise, $e_G(v) = 2rad(H_{1-i}) + 1$).

\item If $e_{H_i}(v) \geq rad(H_{1-i}) + 1$ then, $e_G(v) = 2 e_{H_i}(v)$ if and only if we have $e_{H_{1-i}}(u) < e_{H_i}(v)$ for some neighbour $u \in N_G(v)$ (otherwise, $e_G(v) = 2e_{H_i}(v)+1$).
\end{itemize}
\end{lemma}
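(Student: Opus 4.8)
The plan is to reduce all three cases to a single quantity: the eccentricity of the clique $S := N_G(v)$ inside the Helly graph $H := H_{1-i}$, where I write $e_H(S) := \max_{u \in V_{1-i}} d_H(u,S)$ and $r := rad(H_{1-i})$. First I would record the two distance identities coming from bipartiteness and Lemma~\ref{lem:iterated-ball}: for same-side vertices $d_G(v,x) = 2 d_{H_i}(v,x)$, whereas for $u \in V_{1-i}$ one has $d_G(v,u) = 1 + 2 d_H(u,N_G(v)) = 1 + 2 d_{H_i}(v,N_G(u))$ (route the last edge of a shortest path through $N_G(v)$, resp. through $N_G(u)$). Splitting the eccentricity of $v$ according to the side of its farthest vertex then yields the master formula
\[ e_G(v) = \max\{\, 2 e_{H_i}(v), \ 1 + 2 e_H(S) \,\}, \]
so everything reduces to locating $e_H(S)$ relative to $e_{H_i}(v)$ and $r$.

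Next I would prove two pivotal bounds on $e_H(S)$. For the upper bound, the identity $d_G(v,u) = 1 + 2 d_{H_i}(v,N_G(u))$ together with $d_{H_i}(v,\cdot) \le e_{H_i}(v)$ and $N_G(u) \neq \emptyset$ gives $d_G(v,u) \le 1 + 2 e_{H_i}(v)$ for every $u$, hence $e_H(S) \le e_{H_i}(v)$. For the lower bound I would use that $H$ is Helly (Lemma~\ref{lem:half-helly}): since $S$ is a clique of $H$, the radius-one balls centred in $S$ pairwise intersect, so by the Helly property they share a vertex $c$ with $d_H(c,S) \le 1$; were $e_H(S) \le r-2$, this $c$ would satisfy $e_H(c) \le e_H(S) + 1 \le r-1 < rad(H)$, a contradiction, whence $e_H(S) \ge r-1$. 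Applying both bounds to a central vertex of $H_i$ also yields $rad(H_i) \ge r-1$, and therefore $e_{H_i}(v) \ge rad(H_i) \ge r-1$ for every $v \in V_i$; this is exactly what forces equality $e_{H_i}(v) = r-1$ in the first case, giving $e_G(v) = 1 + 2(r-1) = 2r-1 = 2 e_{H_i}(v)+1$.

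With the master formula and these bounds, each case becomes a short arithmetic comparison. In the first case ($e_{H_i}(v) \le r-1$) the bounds pin $e_H(S) = r-1$ while $2 e_{H_i}(v) \le 2r-2$, so the odd term $2r-1$ dominates. In the second case ($e_{H_i}(v) = r$) the bounds give $e_H(S) \in \{r-1,r\}$, and since $2r$ is even, $e_G(v) = 2r$ iff $e_H(S) = r-1$, i.e.\ iff $d_H(u,N_G(v)) \le r-1$ for all $u$; I would then upgrade ``$e_H(S) \le r-1$'' to ``$N_G(v) \subseteq C(H_{1-i})$'' by a clique trick: if some $w \in S$ had $e_H(w) \ge r+1$, any $u$ with $d_H(u,w) \ge r+1$ would lie within $r-1$ of some $w' \in S$, and $d_H(w,w') \le 1$ would force $d_H(u,w) \le r$, a contradiction. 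In the third case ($e_{H_i}(v) \ge r+1$) the upper bound gives $e_H(S) \le e_{H_i}(v)$, so $e_G(v) \in \{2 e_{H_i}(v), 2 e_{H_i}(v)+1\}$, the former occurring exactly when $e_H(S) < e_{H_i}(v)$.

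The main obstacle is the forward direction of the third case: converting ``$e_H(S) \le e_{H_i}(v)-1$'' into the existence of a single neighbour $u \in N_G(v)$ with $e_{H_{1-i}}(u) < e_{H_i}(v)$ (the reverse implication is immediate from $e_H(S) \le \min_{w \in S} e_H(w)$). Here I would invoke the Helly property of half-balls (Theorem~\ref{thm:bipartite-retract}), mirroring Lemma~\ref{lem:diam-bipartite}. Writing $t := e_{H_i}(v)-1 \ge r$, I would consider the family made of $N_G(v) = N_G^1[v] \cap V_{1-i}$ together with all half-balls $N_G^{2t}[u] \cap V_{1-i}$, $u \in V_{1-i}$. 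The latter pairwise intersect because $diam(H) \le 2r \le 2t$, and each of them meets $N_G(v)$ precisely because the case hypothesis $e_H(S) \le t$ guarantees $d_H(u,N_G(v)) \le t$. By the Helly property this family has a common vertex $w$, which lies in $N_G(v)$ and satisfies $d_H(w,u) \le t$ for all $u$, i.e.\ $e_{H_{1-i}}(w) \le t < e_{H_i}(v)$, as required. The delicate point to get right throughout is the off-by-one bookkeeping between $G$-distances and $H$-distances, and checking that the two families of intersections hold under exactly the stated case hypotheses.
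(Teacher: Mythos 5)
Your proposal is correct and, once the "master formula" $e_G(v)=\max\{2e_{H_i}(v),\,1+2e_H(S)\}$ and the bounds $rad(H_{1-i})-1\le e_H(S)\le e_{H_i}(v)$ are unpacked, it follows essentially the same route as the paper: the same bipartite distance identities handle the first two cases, and the hard direction of the third case is settled by applying the Helly property of half-balls (Theorem~\ref{thm:bipartite-retract}) to exactly the same family, namely $N_G(v)$ together with the half-balls $N_G^{2t}[u]\cap V_{1-i}$ for $u\in V_{1-i}$. Your reduction of everything to the single quantity $e_H(S)$ is a cleaner organization of the same argument, not a different one.
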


\begin{proof}
Let us first consider the case $e_{H_i}(v) \leq rad(H_{1-i}) - 1$. In particular, every neighbour $u \in N_G(v)$ is at a distance $\leq 1 + 2e_{H_i}(v) + 1 \leq 2rad(H_{1-i})$ from any vertex of $V_{1-i}$. Therefore, $e_{H_i}(v) = rad(H_{1-i}) - 1$, and every vertex of $N_G(v)$ is central in $H_{1-i}$. Observe that $v$ must be at a distance $\geq 2 rad(H_{1-i})-1$ from at least one vertex of $V_{1-i}$ since otherwise, the eccentricity of all the vertices of $N_G(v)$, in $H_{1-i}$, would be $ < rad(H_{1-i})$. As a result, $e_G(v) = 2 rad(H_{1-i})-1$.

Now, consider the case $e_{H_i}(v) = rad(H_{1-i})$. Then, since $G$ is bipartite, we have $e_G(v) = 2rad(H_{1-i})$ if and only if every vertex of $V_{1-i}$ is at a distance $\leq 2rad(H_{1-i})-1$ from vertex $v$ (otherwise, $e_G(v) = 2rad(H_{1-i})+1$). Equivalently, $e_G(v) = 2rad(H_{1-i})$ if and only if, for every $u \in V_{1-i}$, we have $d_G(u,N_G(v)) \leq 2rad(H_{1-i})-2$. Again, since $d_{H_{1-i}}(u,N_G(v)) = d_G(u,N_G(v))/2$, the latter is equivalent to have $d_{H_{1-i}}(u,N_G(v)) \leq rad(H_{1-i}) - 1$. Note that since $N_G(v)$ is a clique of $H_{1-i}$, this last equivalence also implies that $N_G(v) \subseteq C(H_{1-i})$. 

Finally, consider the case $e_{H_i}(v) \geq rad(H_{1-i}) + 1$.
If furthermore there is a neighbour $u \in N_G(v)$ s.t. $e_{H_{1-i}}(u) < e_{H_i}(v)$ then, vertex $v$ is at a distance at most $2e_{H_{1-i}}(u) + 1 < 2e_{H_i}(v)$ from every vertex of $V_{1-i}$, and so, $e_G(v) = 2e_{H_i}(v)$. Now, let us assume to have $e_{H_{1-i}}(u) \geq e_{H_i}(v)$ for every neighbour $u \in N_G(v)$. Suppose for the sake of contradiction $e_G(v) < 2e_{H_i}(v) + 1$. In particular, every vertex of $V_{1-i}$ must be at a distance at most $2e_{H_i}(v) - 1$ from vertex $v$. Equivalently for any $x \in V_{1-i}$, the half-balls $N_G(v)$ and $N_G^{2e_{H_i}(v)-2}[x] \cap V_{1-i}$ intersect. Note that $2e_{H_i}(v)-2 \geq 2rad(H_{1-i}) \geq diam(H_{1-i})$. Hence, the half-balls $N_G^{2e_{H_i}(v)-2}[x] \cap V_{1-i}, \ x \in V_{1-i}$ also pairwise intersect. But then, by Theorem~\ref{thm:bipartite-retract}, there is a neighbour $u \in N_G(v)$ such that $e_{H_{1-i}}(u) \leq (2e_{H_i}(v)-2)/2 = e_{H_i}(v) - 1$. A contradiction.   
\end{proof}

Of the three cases in the above Lemma~\ref{lem:ecc}, the real algorithmic challenge is the case $e_{H_i}(v) = rad(H_{1-i})$, for some $i \in \{0,1\}$. Finally, we prove that such case can be solved in linear time for chordal bipartite graphs. 

\begin{proof}[Proof of Theorem~\ref{thm:chordal}]
By Lemma~\ref{lem:partite-set-strongly-chordal}, $H_0$ and $H_1$ are strongly chordal graphs. We compute clique-trees $T_0$ and $T_1$ for both $H_0$ and $H_1$, that takes linear time by Corollary~\ref{cor:clique-tree}. Note that in particular, for each $i \in \{0,1\}$ we get $w(T_i) = {\cal O}(m+n)$, with $w(T_i)$ the sum of the cardinalities of the maximal cliques of $H_i$. Then, we compute all the eccentricities of $H_0$ (resp, of $H_1$), that is ${\cal O}(w(T_0))$-time equivalent to computing $C(H_0)$ according to Lemma~\ref{lem:unimodal} (resp., ${\cal O}(w(T_1))$-time equivalent to computing $C(H_1)$). By Proposition~\ref{prop:center}, the center can be computed in ${\cal O}(w(T_0))$ time (resp., in ${\cal O}(w(T_1))$ time). Overall, for each $i \in \{0,1\}$ and every $v \in V_i$, we so computed $e_{H_i}(v)$. It takes total linear time. Doing so, we can also compute $rad(H_0)$ and $rad(H_1)$ within the same amount of time. Finally, we are left explaining how to deduce from the latter the eccentricities $e_G(v), \ v \in V_0$ (the case $v \in V_1$ is symmetric to this one). By Lemma~\ref{lem:ecc}, if $e_{H_0}(v) \neq rad(H_1)$ then, we can compute $e_G(v)$ by scanning the neighbour set $N_G(v)$. Therefore, in total ${\cal O}(m)$ time, we can compute $e_G(v)$ for every $v \in V_0$ s.t. $e_{H_0}(v) \neq rad(H_1)$. In the same way, for every $v \in V_0$ s.t. $e_{H_0}(v) = rad(H_1)$ and $N_G(v) \not\subseteq C(H_1)$, we set directly $e_G(v) = 2rad(H_1)+1$, that is correct by Lemma~\ref{lem:ecc}, and it also takes ${\cal O}(m)$ time in total.

Let $S := \{ v \in V_0 \mid e_{H_0}(v) = rad(H_1), \ N_G(v) \subseteq C(H_1) \}$. By Lemma~\ref{lem:ecc}, every vertex of $S$ has eccentricity either $2rad(H_1)$ or $2rad(H_1)+1$ in $G$. We may further assume $rad(H_1) \geq 3$ because otherwise, we can compute the eccentricity of all the vertices of $S$ in total ${\cal O}(m)$ time by applying the technique of Lemma~\ref{lem:cst-ecc}. In this situation, we define a set $W \subseteq V_1$ with the following property: for every $v \in S$, we have $e_G(v) = 2rad(H_1)$ if and only if for every $w \in W$ we have $d_G(v,w) \leq 3$. Note that doing so, we may compute the eccentricity of all the vertices of $S$ in ${\cal O}(m)$ time by applying the same technique as for Lemma~\ref{lem:cst-ecc} ({\it i.e.}, we set $k=3$, then we compute partitions for $W$ rather than for the full half-set $V_1$). Furthermore, in order to compute this set $W$, we proceed in a quite similar fashion as for Proposition~\ref{prop:center}. We detail this procedure next.

{\bf The algorithm.}
Let $r = rad(H_1)$.
First, we compute a vertex $c \in C(H_1)$ that is adjacent to all the central vertices of $H_1$ (such vertex is guaranteed to exist by Lemma~\ref{lem:center-strongly-chordal}). For every $u \in V_1$ s.t. $d_{H_1}(u,c) \geq r-1$, we compute a vertex $u^* \in N_{H_1}^{r-2}[u] \cap \left( \bigcap\{ N_{H_1}[x] \mid x \in N_{H_1}[c] \cap N^{r-1}_{H_1}[u] \} \right)$ and we add this gate $u^*$ to the set $W$.

{\bf Correctness.} By Lemma~\ref{lem:ecc}, for every $v \in S$, we have $e_G(v) = 2rad(H_1)$ if and only if, for every $u \in V_1, \ d_{H_1}(u,N_G(v)) \leq r-1$. Note that if $d_{H_1}(u,c) \leq r-2$ then, the distance in $H_1$ between $u$ and any vertex of $C(H_1)$ is at most $r-1$. So, we are only interested in those $u$ s.t. $d_{H_1}(u,c) \geq r-1$. Assume the existence for such vertex $u$ of a $u^* \in N_{H_1}^{r-2}[u] \cap \left( \bigcap\{ N_{H_1}[x] \mid x \in N_{H_1}[c] \cap N^{r-1}_{H_1}[u] \} \right)$. If $d_G(v,u^*) \leq 3$ then, $d_G(u,v) \leq 3+2(r-2) = 2r-1$, as desired. Conversely, if $e_G(v) = 2rad(H_1)$ then there must be a $c_u \in N_G(v) \subseteq C(H_1)$ s.t. $d_{H_1}(c_u,u) \leq r-1$. In particular, $c_u \in N_{H_1}[c] \cap N^{r-1}_{H_1}[u] \subseteq N_{H_1}[u^*]$ and therefore, $d_G(v,u^*) \leq d_G(v,c_u) + d_G(c_u,u^*) = 1 + 2d_{H_1}(c_u,u^*) \leq 3$. The existence of a $u^*$ as above was already proved in Proposition~\ref{prop:center}, but we repeat here the arguments for completeness. There are two cases. If $d_{H_1}(u,c) = r$ then, $N_{H_1}[c] \cap N^{r-1}_{H_1}[u] = L_{H_1}(u,r-1,c)$, and the result follows from the combination of Lemmas~\ref{lem:slice} and~\ref{lem:gated-clique}. Otherwise, $d_{H_1}(u,c) = r-1$, and let $Z = \{ z \in N_{H_1}[c] \mid d_{H_1}(u,z) = r-1 \}$. Since the balls $N_{H_1}^{r-2}[u]$ and $N_{H_1}[z], \ z \in Z$ pairwise intersect, by the Helly property, there exists a $u^* \in N_{H_1}^{r-2}[u] \cap \left( \bigcap \{ N[z] \mid z \in Z \} \right)$. Observe that by construction, $u^* \in L_{H_1}(u,r-2,c)$, that is a clique according to Lemma~\ref{lem:slice}. We are done as $N^{r-1}_{H_1}[u] \cap N_{H_1}[c] = L_{H_1}(u,r-2,c) \cup Z \subseteq N_{H_1}[u^*]$.

{\bf Complexity.} The constructive proof of Proposition~\ref{prop:center} yields an ${\cal O}(w(T_1))$-time algorithm in order to compute $W$. Therefore, this set $W$ can be constructed in linear time. 
\end{proof}

\section{$k$-chromatic graphs}\label{sec:k-chromatic}

Recall that a proper $k$-coloring of $G=(V,E)$ is any mapping $c : V \to \{1,2,\ldots,k\}$ such that $c(u) \neq c(v)$ for every edge $uv \in E$. The chromatic number of $G$ is the least $k$ such that it has a proper $k$-coloring, and a $k$-chromatic graph is a graph whose chromatic number is equal to $k$. Note in particular with this definition that a $(k-1)$-chromatic graph is not $k$-chromatic. We study the diameter within the absolute retracts of $k$-chromatic graphs, for every fixed $k \geq 3$. 

Our approach requires such graphs to be equipped with a proper $k$-coloring. While this is a classic NP-hard problem for every $k \geq 3$~\cite{JoG79}, it is known that it can be done in polynomial time for absolute retracts of $k$-chromatic graphs~\cite{BaP99}. We remind this colouring algorithm in Sec.~\ref{sec:col} where we observe it can be implemented in order to run in linear time. Our general framework for diameter computation (somehow mimicking what we did in Sec.~\ref{sec:k2-diam}) is presented in Sec.~\ref{sec:framework}. We complete our approach in Sec.~\ref{sec:di}, before concluding this section with its main result in Sec~\ref{sec:main-k}.

\subsection{Colouring algorithm}\label{sec:col}

We start with a reminder of the Colouring algorithm presented in~\cite{BaP99} (Algorithm~\ref{alg:col}). The description below is exactly the same as in~\cite{BaP99}, where an ${\cal O}(n^2)$ running-time was claimed.   

\begin{algorithm}
	\caption{Colouring algorithm.}
	\label{alg:col}
	\footnotesize
	
	\begin{algorithmic}[1]
		\REQUIRE{A graph $G=(V,E)$.}\\
		\STATE{Pick a vertex $u$ and let $K$ be a maximal clique containing $u$.}
		\STATE{Let $k := |K|$ and let $c : K \to \{1,2,\ldots,k\}$ be a proper $k$-coloring. For each $v \in K \setminus \{u\}$, colour the common neighbours of $K \setminus \{v\}$ with $c(v)$. Then, let $L$ be the set of vertices coloured so far. For each neighbour $v$ of $u$ not in $L$, there is a unique colour $i$ such that $u$ does not have a neighbour in $L$ with colour $i$ (otherwise, $G$ is not an absolute retract). Assign $c(u) : = i$. If $N_G[u]$ includes all of $G$, then {\bf goto} Step 5.}
		\STATE{For each vertex $v \in V$ s.t. $d_G(u,v) = 2$, there is a unique colour $i$ not occurring in $N_G(u) \cap N_G(v)$ or in $\{u\} \cup (N_G(u) \cap N_G(v))$ (otherwise, $G$ is not an absolute retract). Assign $c(v) := i$. If $N_G^2[u]$ includes all of $G$, then {\bf goto} Step 5, otherwise assign $\ell:=3$.}
		\STATE{For each vertex $v \in V$ s.t. $d_G(u,v) = \ell$, there is a unique colour $i$ not occurring in $N_G^{\ell-1}[u] \cap N_G(v)$ (otherwise, $G$ is not an absolute retract). Assign $c(v) := i$. If $N_G^{\ell}[u]$ does not yet contain all of $G$, then assign $\ell:=\ell+1$ and start Step 4 again.}
		\STATE{If $c(u) = c(v)$ for some edge $uv$, then $G$ is not an absolute retract. Otherwise, $c$ is a proper $k$-coloring of $G$, and $G$ is $k$-chromatic.}
	\end{algorithmic}
\end{algorithm}

We refer to~\cite{BaP99} for a correctness proof of Algorithm~\ref{alg:col}. Our modest, but important contribution for our claimed running-times is as follows:

\begin{proposition}\label{prop:col}
There is a linear-time algorithm such that, for every $k \geq 3$, if the input $G$ is an absolute retract of $k$-chromatic graphs, then it computes a proper $k$-coloring of $G$. 
\end{proposition}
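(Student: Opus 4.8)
The plan is to show that Algorithm~\ref{alg:col} can be implemented in linear time, by carefully choosing data structures so that the total work charged across all vertices and edges is $\mathcal{O}(m+n)$. Since the correctness of the algorithm is already established in~\cite{BaP99}, I only need to argue about its running time when the input is indeed an absolute retract of $k$-chromatic graphs (in which case, by the correctness proof, every ``unique colour'' sought in Steps~2--4 genuinely exists and is unique).

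First I would handle the preprocessing in Steps~1--2. Computing a maximal clique $K$ containing a fixed vertex $u$ can be done greedily in time proportional to $\sum_{v \in K} \deg_G(v) = \mathcal{O}(m)$, by repeatedly intersecting the candidate set with the neighbourhood of the last vertex added. The crucial observation for Step~2 is that $|K| = k \le \Delta(G)+1$, so the colours form a set of size at most $k$; for each vertex $v$ we can maintain a length-$k$ Boolean array (or a bitmask) recording which colours already appear among its coloured neighbours, and ``the unique missing colour'' is then found in $\mathcal{O}(k)$ time per vertex. To colour the common neighbours of $K \setminus \{v\}$ with $c(v)$, I would, for each already-coloured vertex, push its colour to the colour-arrays of its neighbours; since each edge is scanned a constant number of times, this is $\mathcal{O}(m)$ overall.

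The heart of the implementation is Step~4 (and the analogous Step~3). Here the layers $N_G^\ell[u]$ are processed in order of increasing distance $\ell$ from $u$, which is exactly the order produced by a single breadth-first search rooted at $u$; so I would run one BFS up front in $\mathcal{O}(m+n)$ time to fix the layer structure and the distance labels $d_G(u,\cdot)$. For a vertex $v$ at distance $\ell$, the rule asks for the unique colour not occurring in $N_G^{\ell-1}[u] \cap N_G(v)$, i.e.\ among the already-coloured neighbours of $v$ lying in strictly smaller layers. With the per-vertex colour-arrays maintained incrementally --- whenever a vertex receives its colour, it notifies each of its neighbours in the next layer --- determining the missing colour for $v$ costs $\mathcal{O}(k)$. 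Summed over all vertices this is $\mathcal{O}(kn)$, and since $k \le \Delta+1$ this is dominated by $\mathcal{O}(m+n)$ work against the edge set; more carefully, each edge $xy$ triggers at most one colour-notification (from the endpoint in the lower layer to the one in the higher layer), so the notification cost is $\mathcal{O}(m)$, and the per-vertex missing-colour lookups contribute $\mathcal{O}(n \cdot k)$ which I would bound by $\mathcal{O}(m+n)$ using that each coloured vertex has a coloured neighbour in the previous layer, i.e.\ the relevant colour sets are populated only through edges.

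The main obstacle I anticipate is the bookkeeping that makes the $\mathcal{O}(k)$-per-vertex cost genuinely amortize to linear total time, rather than the naive $\mathcal{O}(kn)$ which need not be $\mathcal{O}(m+n)$ when $k$ is comparable to $n$ while $G$ is sparse. The resolution is that for an absolute retract of $k$-chromatic graphs the clique number forces $\Delta(G) \ge k-1$, so $n \le \tfrac{2m}{k-1} + 1$ once $G$ is connected and has a $k$-clique, giving $kn = \mathcal{O}(m+n)$; alternatively one avoids scanning the full length-$k$ array by representing each vertex's set of forbidden colours as a dynamic list and reading off the complement only via the single missing entry, charging the search to the incident edges that deposited those colours. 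I would also note that Step~5, the final verification, is a single pass over $E$ checking $c(x) \ne c(y)$ for every edge, in $\mathcal{O}(m)$ time. Combining Steps~1--5, the total running time is $\mathcal{O}(m+n)$, which proves the proposition; and the heuristic nature (failing gracefully when some required unique colour does not exist) follows directly because each such failure is detected locally in $\mathcal{O}(k)$ time at the offending vertex.
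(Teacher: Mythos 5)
Your route is genuinely different from the paper's. You implement Algorithm~\ref{alg:col} \emph{as stated}, computing for each vertex $v$ at BFS-distance $\ell$ the colours present in $N_G^{\ell-1}[u]\cap N_G(v)$ via one notification per cross-layer edge. The paper instead \emph{replaces} the ``unique missing colour in a prescribed intersection'' rules by the classic greedy rule (least colour absent from the whole already-coloured neighbourhood), run in BFS order with a specific tie-break at distance one (non-increasing $|N_G(x)\cap K|$) and a two-phase rule at distance two, and then proves that on an absolute retract this greedy colouring coincides with the one produced by Algorithm~\ref{alg:col}. Your approach avoids that coincidence argument entirely, at the price of heavier bookkeeping; both are legitimate.

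Two concrete problems remain in your write-up. First, your bound $kn=\mathcal{O}(m+n)$ is justified by ``$\Delta(G)\ge k-1$, so $n\le 2m/(k-1)+1$,'' which does not follow: a large maximum degree says nothing about $n$ versus $m$ (attach a long path to a $k$-clique). What you need is \emph{minimum} degree at least $k-1$, which does hold here because by Theorem~\ref{thm:absolute-retract} every maximal clique of an absolute retract has cardinality exactly $k$; with $\delta(G)\ge k-1$ you get $n(k-1)\le 2m$ and hence $kn=\mathcal{O}(m)$ (and a vertex of degree $<k-1$ can be detected in $\mathcal{O}(m)$ time and used to reject the input). Your fallback ``charge the search to the edges that deposited the colours'' also needs this, since initializing a length-$k$ table per vertex already costs $\Theta(kn)$. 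Second, Step~3 is \emph{not} analogous to Step~4: for $d_G(u,v)=2$ the prescribed set $N_G(u)\cap N_G(v)$ may miss \emph{two} colours, one of them being $c(u)$, and Algorithm~\ref{alg:col} then falls back to the unique colour missing from $\{u\}\cup(N_G(u)\cap N_G(v))$. An implementation that simply looks for ``the unique missing colour among previous-layer neighbours'' would wrongly declare failure on such (perfectly valid) absolute retracts. This is exactly the point where the paper's modified algorithm also inserts a special case (first exclude $c(u)$, and only if no colour $\le k$ remains allow $c(u)$ back in). With these two repairs your argument goes through.
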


\begin{proof}
Consider the following modified version of Algorithm~\ref{alg:col}:
\begin{enumerate}
\item We start from an arbitrary vertex $u$ and we greedily compute a maximal clique $K$ containing vertex $u$. It takes linear time. Furthermore, let $k := |K|$ and let $c : K \to \{1,2,\ldots,k\}$ be a proper $k$-coloring ({\it i.e.}, obtained by numbering the vertices of $K$ from $1$ to $k$). 
\item For every vertex $x \in V$, we compute $|N_G(x) \cap K|$. It can be done in linear time by scanning once the neighbourhood of each vertex $v \in K$. We consider the vertices $v \in N_G(u) \setminus K$ by non-increasing value of $|N_G(x) \cap K|$. Note that such ordering of $N_G(u) \setminus K$ can be computed using a linear-time sorting algorithm. As in the standard greedy coloring algorithm, we assign to $v$ the least color $i$ not present in its neighbourhood. If $i \geq k+1$ then, $G$ is not an absolute retract and we stop. Doing so, it takes ${\cal O}(|N_G(v)|)$ time in order to color $v$, and so, this whole step takes ${\cal O}(m)$ time.
\item We perform a breadth-first search rooted at $u$. Then, we consider the vertices $v \in V$ s.t. $d_G(u,v) = 2$. First, we search for the least color $i$ such that $i \neq c(u)$ and there is no neighbour of $v$ coloured $i$. If $i \leq k$ then, we set $c(v) := i$. Otherwise, we assign to $v$ the least color $i$ not present in $N_G(v)$ (possibly, $i = c(u)$). If $i \geq k+1$ then, $G$ is not an absolute retract and we stop. This whole step also takes ${\cal O}(m)$ time.
\item Finally, we consider all the remaining vertices $v, \ d_G(u,v) \geq 3$, by non-decreasing value of $d_G(u,v)$ (these distances were computed during the breadth-first search). Here also, we can compute such ordering of the remaining vertices by using a linear-time sorting algorithm. We apply the classic greedy coloring procedure, assigning to the current vertex $v$ the least colour $i$ not present in its neighbourhood. If $i \geq k+1$ then $G$ is not an absolute retract, and we stop. Overall, the whole algorithm indeed runs in linear time.  
\end{enumerate}
In order to prove correctness of this above algorithm, it suffices to prove that if $G$ is an absolute retract, then it computes the same coloring as Algorithm~\ref{alg:col}. This is clear at Step 1, where we only colour the vertices of the maximal clique $K$. Then, at Step 2, we start coloring the neighbours of $u$ with exactly $k-1$ neighbours in $K$. Note that the only possible color amongst $\{1,\ldots,k\}$ to assign to such vertex $x$ is $c(v)$, where $v$ is the unique non-neighbour of $x$ in $K$. This is also what Algorithm~\ref{alg:col} does. Let $L$ be the set of vertices coloured so far. We continue coloring the remaining neighbours $v \in N_G(u) \setminus L$. Algorithm~\ref{alg:col} exploits the property that such vertices have exactly one color $i \in \{1,\ldots,k\}$ that is not present amongst their neighbours in $L$. But then, the classic greedy coloring procedure also assigns this color $i$ to $v$. Next, consider a vertex $v$ s.t. $d_G(u,v) = 2$. If there is a unique available color $i$ amongst $\{1,\ldots,k\}$ that is not present in $N_G(u) \cap N_G(v)$ then, Algorithm~\ref{alg:col} assigns this color to $v$; so does the classic greedy coloring procedure. Otherwise, Algorithm~\ref{alg:col} assigns to $v$ the unique color amongst $\{1,\ldots,k\}$ that is not present in $\{u\} \cup (N_G(u) \cap N_G(v))$; so does our modified greedy coloring procedure, where we first exclude color $c(u)$ from the range of possibilities. Finally, similar arguments apply to the vertices $v$ s.t. $d_G(u,v) \geq 3$.   
\end{proof}

In the remainder of the section, we always assume the input graph $G$ to be given with a proper $k$-coloring. We sometimes use implicitly the fact that, for an absolute retract, such proper $k$-coloring is unique up to permuting the colour classes~\cite{PeP85}. 

\subsection{General framework}\label{sec:framework}

The present section aims at introducing the necessary results and terminology for Sec~\ref{sec:di}.
Recall that in a graph $G=(V,E)$, a vertex $v$ is covered by another vertex $w$ if $N_G(v) \subseteq N_G(w)$ (a covered vertex is called embeddable in~\cite{PeP85}). We now introduce a first characterization of absolute retracts:

\begin{theorem}[\cite{PeP85}]\label{thm:pesch-characterization}
Let $k \geq 3$. The graph $G=(V,E)$ is an absolute retract of $k$-chromatic graphs if and only if for any proper $k$-coloring $c$, every peripheral vertex $v$ is adjacent to all vertices $u$ with $c(u) \neq c(v)$, or it is covered and $G \setminus v$ is an absolute retract of $k$-chromatic graphs.  
\end{theorem}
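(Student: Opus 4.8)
The plan is to reconstruct the argument of Pesch and Poljak~\cite{PeP85} by first isolating a few elementary facts and then inducting on $|V(G)|$. The basic machinery I would record is threefold. A graph homomorphism is distance non-increasing, so a retraction $r:H\to G$ fixing $G$ satisfies $d_G(a,b)=d_G(r(a),r(b))\le d_H(a,b)\le d_G(a,b)$ for $a,b\in V(G)$; hence \emph{every retract is an isometric subgraph}. Next, if $v$ is covered by some $w$ (so $N_G(v)\subseteq N_G(w)$, which forces $vw\notin E$), then the map fixing $G\setminus v$ and sending $v\mapsto w$ is a retraction of $G$ onto $G\setminus v$, and moreover $\chi(G\setminus v)=\chi(G)=k$, since otherwise a $(k-1)$-colouring of $G\setminus v$ would extend to one of $G$ by giving $v$ the colour of $w$. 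Finally, ``is a retract of'' is transitive. The pivotal reduction I would then prove is that $G$ is an absolute retract of $k$-chromatic graphs if and only if every isometric, isochromatic $k$-chromatic \emph{one-vertex} extension $G+x$ \emph{folds}, i.e.\ there is $y\in V(G)$ with $c(y)=c(x)$ and $N_{G+x}(x)\subseteq N_G(y)$. Here $G+x$ is isometric exactly when $N(x)$ has pairwise distance at most $2$ in $G$, it is isochromatic exactly when $N(x)$ misses some colour, and folding $x$ is precisely the existence of a retraction $G+x\to G$. This reformulation is the analogue, for $k$-colourings, of the half-ball Helly condition of Theorem~\ref{thm:bipartite-retract}, and of the Helly characterisation of absolute retracts of all reflexive graphs~\cite{HeR87}.

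With this reformulation in hand the theorem becomes a statement about which vertices can carry an obstruction, and I would prove it by induction on $n=|V(G)|$ in both directions. For the \textbf{backward direction}, the clean dichotomy is: either some peripheral vertex fails the first alternative — in which case the hypothesis forces it to be a covered vertex $v$ with $G\setminus v$ an absolute retract, and I reduce — or every peripheral vertex is adjacent to all vertices of other colours. In the latter case every vertex of maximum eccentricity has eccentricity at most $2$, so $\mathrm{diam}(G)\le 2$ and $G$ is complete multipartite on its colour classes, for which the one-vertex fold condition is immediate. In the reduction case, given any isometric isochromatic extension $H$, fact (ii) lets $G\setminus v$ retract off $H$ via some $\rho$, and I would build a retraction $H\to G$ by keeping $\rho$ on $V(H)\setminus V(G)$, fixing $G$ pointwise, and rerouting the edges at $v$ using $N_G(v)\subseteq N_G(w)$; that $v$ is peripheral is what guarantees no vertex of $H$ is ``forced through'' $v$, so its external neighbours can be handled.

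For the \textbf{forward direction} I would argue by contraposition. Suppose that, for some proper $k$-colouring, a peripheral vertex $v$ is neither adjacent to all other colours nor a covered vertex with $G\setminus v$ an absolute retract; I then construct a bad $k$-chromatic isometric isochromatic extension, certifying via the reformulation that $G$ is not an absolute retract. If $v$ is not covered, then $v$ is the unique vertex dominating $N_G(v)$; attaching a suitable gadget at a vertex $u$ realising $e_G(v)=\mathrm{diam}(G)$ — so that a new vertex whose neighbourhood is $N_G(v)$ is pushed past distance $\mathrm{diam}(G)$ from the gadget and hence cannot fold onto $v$ — yields an extension whose new vertex has no legal fold target at all. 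If instead $v$ is covered but $G\setminus v$ is \emph{not} an absolute retract, then a witnessing one-vertex extension of $G\setminus v$ lifts to one of $G$. Throughout, I would use that an absolute retract has an essentially unique $k$-colouring~\cite{PeP85}, so that the colour constraints in the fold condition are well defined.

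The hard part will be twofold. First, the reduction from arbitrary obstructions to \emph{one-vertex} obstructions is the genuine technical crux: one must show that a minimal non-foldable isometric isochromatic extension has a single new vertex, which is delicate when the extension is itself irreducible over $G$, and it is exactly here that a Helly-type property of the colour-restricted neighbourhoods must be extracted. Second, localising these obstructions to \emph{peripheral} vertices — equivalently, transporting folds between $G$ and $G\setminus v$ across a covered vertex while respecting both distance-$\le 2$ and colour constraints, and designing the far-side gadget in the non-covered case — is where the bulk of the case analysis lies. I expect the one-vertex reduction to be the main obstacle, the peripheral gadget to be the most error-prone construction, and the complete-multipartite base case to be routine.
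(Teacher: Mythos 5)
A preliminary remark: the paper does not prove Theorem~\ref{thm:pesch-characterization}; it is quoted from~\cite{PeP85} and used as a black box, so there is no in-paper proof to measure your attempt against. Evaluated on its own terms, what you have written is an outline whose two central steps are announced but not carried out, and one of the steps that \emph{is} carried out does not work as stated.

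The concrete problems are these. First, the reduction to one-vertex extensions is the load-bearing wall of the whole plan and you give no argument for it. It cannot be obtained by adding the vertices of $H \setminus G$ one at a time (the intermediate graphs need not remain isometric or isochromatic extensions of $G$); the standard route is to map each external vertex to a vertex of $G$ realizing its neighbourhood-and-distance trace and then verify edge-compatibility between adjacent external vertices, and this is exactly where a colour-Helly property such as condition~1 of Theorem~\ref{thm:absolute-retract} must be invoked. Without it the equivalence is unproved, and your base case (complete multipartite graphs) also rests on it. Second, in the backward direction the map $r$ you define (identity on $V(G)$, $\rho$ on $V(H)\setminus V(G)$) is not a homomorphism: for an edge $h'v$ with $h'\in V(H)\setminus V(G)$ you only know that $\rho(h')$ is adjacent to $\rho(v)\in V(G)\setminus\{v\}$, not to $v$, and the inclusion $N_G(v)\subseteq N_G(w)$ reroutes edges \emph{out of} $v$, not \emph{into} it. The sentence claiming that peripherality of $v$ guarantees the external neighbours ``can be handled'' is an assertion of precisely the point that needs proof --- the entire content of the theorem is that the obstruction localizes at peripheral vertices --- and peripherality is never used in a checkable way anywhere in your sketch. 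Third, the forward-direction gadget is entirely unspecified; producing a $k$-chromatic, isometric, isochromatic extension that provably admits no retraction requires verifying that the added vertices create no shortcuts inside $G$ and do not raise the chromatic number, and none of this is done. What is correct and routine: the three preliminary facts, the deduction that the ``every peripheral vertex is universal to the other colours'' case forces $G$ to be complete multipartite, and the fold condition for complete multipartite graphs.
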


We highlight the following special case of this characterization, that allows us to deal with the diameter-two case separately from the general case, namely:

\begin{proposition}[\cite{PeP85}]\label{prop:diam2}
Let $G=(V,E)$ be an absolute retract of $k$-chromatic graphs for some $k \geq 3$, and let $c$ be a corresponding proper $k$-coloring. Then, $diam(G) \leq 2$ if and only if for every $1 \leq i \leq k$, there exists a $z_i \in V$ with $z_iv \in E$ for all $v \in V$ s.t. $c(v) \neq i$.
\end{proposition}

\begin{corollary}\label{cor:diam-2}
If $G=(V,E)$ is an absolute retract of $k$-chromatic graphs for some $k \geq 3$, then we can decide whether $diam(G) \leq 2$ in linear time.
\end{corollary}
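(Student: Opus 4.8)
The goal is to decide whether $\mathrm{diam}(G) \leq 2$ in linear time, given that $G$ is an absolute retract of $k$-chromatic graphs. The plan is to apply the criterion of Proposition~\ref{prop:diam2}: $\mathrm{diam}(G) \leq 2$ if and only if for every colour $i \in \{1,\ldots,k\}$ there exists a vertex $z_i$ adjacent to every vertex of a different colour. So the task reduces to the purely algorithmic problem of checking, for each colour class, whether such a \emph{universal-across-other-colours} vertex $z_i$ exists, and doing so in total $\mathcal{O}(m+n)$ time.

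First I would obtain the proper $k$-coloring $c$ in linear time via Proposition~\ref{prop:col}; by hypothesis $G$ is an absolute retract of $k$-chromatic graphs, so the colouring procedure succeeds. For a vertex $z$ of colour $i$, the condition ``$z$ is adjacent to all $v$ with $c(v) \neq i$'' is equivalent to saying that $z$ has no non-neighbour outside its own colour class, i.e. $\deg_G(z) = n - |c^{-1}(i)|$, where $|c^{-1}(i)|$ is the size of colour class $i$. Indeed, a vertex of colour $i$ can only be adjacent to the $n - |c^{-1}(i)|$ vertices not sharing its colour, and it is adjacent to all of them precisely when its degree attains this maximum. This reduces the test to a comparison of integers.

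The concrete algorithm is then straightforward: compute the degree of every vertex (linear time by summing over the adjacency lists) and the cardinality $|c^{-1}(i)|$ of each colour class (one pass over the vertices). For each colour $i$, scan its vertices and check whether any one of them satisfies $\deg_G(z) = n - |c^{-1}(i)|$; record the existence of such a $z_i$. Finally, report $\mathrm{diam}(G) \leq 2$ if and only if such a witness $z_i$ was found for every colour $i \in \{1,\ldots,k\}$. Each step scans each vertex and each edge a constant number of times, so the overall running time is $\mathcal{O}(m+n)$, i.e.\ linear.

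I do not expect any genuine obstacle here, since the heavy lifting is done by Propositions~\ref{prop:col} and~\ref{prop:diam2}; the only point requiring a line of justification is the equivalence between ``$z$ dominates all other colour classes'' and the degree equality $\deg_G(z) = n - |c^{-1}(c(z))|$, which follows immediately from the fact that $z$ has no neighbour of its own colour. One minor subtlety worth stating explicitly is that we need not verify in advance that $G$ actually is an absolute retract: if the colouring routine fails we may simply declare the input invalid, and if it succeeds the degree test is correct regardless. Thus the corollary follows directly, and the entire computation is linear in $n+m$.
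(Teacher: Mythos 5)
Your proof is correct and follows essentially the same route as the paper: compute the colouring via Proposition~\ref{prop:col}, then verify the criterion of Proposition~\ref{prop:diam2} by a degree test within each colour class (the paper picks a maximum-degree vertex of colour $i$ as the candidate $z_i$, which is equivalent to your check that some vertex of colour $i$ attains degree $n - |c^{-1}(i)|$). The only point worth stating, which both you and the paper leave implicit, is that any valid $z_i$ must itself have colour $i$ (otherwise it would need to be adjacent to itself), so restricting the search to the colour class $i$ is without loss of generality.
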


\begin{proof}
By Proposition~\ref{prop:col}, a proper $k$-coloring $c : V \to \{1,\ldots,k\}$ can be computed in linear time. Then, we apply the criterion of Proposition~\ref{prop:diam2}. For that, for each colour $i$ we choose as our candidate vertex for $z_i$ a maximum-degree vertex of colour $i$.
\end{proof}

Thus, in what follows, we focus on the case $diam(G) \geq 3$. For that, we use in combination to Theorem~\ref{thm:pesch-characterization} yet another characterization of absolute retracts (we refer the reader to~\cite{BaP99} for other characterizations, leading to polynomial-time recognition algorithms):

\begin{theorem}[\cite{BaP99}]\label{thm:absolute-retract}
Let $k \geq 3$. The graph $G=(V,E)$ is an absolute retract of $k$-chromatic graphs if and only if the following conditions hold for any proper $k$-coloring $c : V \to \{1,2,\ldots,k\}$:
\begin{enumerate}
\item For each colour $i$, any family of balls intersects in colour $i$ whenever each pair of them does.
\item Every maximal clique of $G$ has cardinality exactly $k$. 
\item For each colour $i$ and any two non-adjacent vertices $u$ and $v$ such that: $c(v) \neq i$, and either $c(u) \neq i$ or $d_G(u,v) \geq 3$, there is a neighbour $x$ of $v$ on a shortest $uv$-path s.t. $c(x) = i$. 
\end{enumerate}
\end{theorem}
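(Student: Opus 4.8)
The plan is to prove both implications through the standard ``one-vertex extension'' lens for absolute retracts: a graph $H$ is an absolute retract of $k$-chromatic graphs if and only if, whenever $H$ is an isometric and isochromatic subgraph of a $k$-chromatic graph $G$, the identity on $H$ extends to a retraction $f : G \to H$, and such a retraction can always be built by fixing $H$ and then assigning images to the vertices of $V(G) \setminus V(H)$ one at a time. Throughout I fix a proper $k$-coloring $c$ and use that, for an absolute retract, $c$ is unique up to permuting the colour classes, so it is harmless to argue ``for any $c$''.

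\textbf{Necessity.} Assuming $H$ is an absolute retract, I would derive each condition by contraposition, each time embedding $H$ isometrically and isochromatically into a $k$-chromatic graph obtained by adding a single new vertex $w$ whose image under any retraction would have to exist but cannot. For condition~1, if some family of balls pairwise intersects in colour $i$ yet has empty common colour-$i$ intersection, I attach a vertex $w$ with $c(w)=i$ at the prescribed distances $r_j$ from the ball centres $v_j$; since the $v_j$ are fixed and homomorphisms do not increase distance, $f(w)$ would be a colour-$i$ vertex of $H$ lying in every $N_H^{r_j}[v_j]$, which does not exist. For condition~2, if a maximal clique $Q$ has fewer than $k$ vertices, I attach a vertex $w$ of a colour $i$ absent from $Q$ to all of $Q$; then $f(w)$ would be a common neighbour of $Q$ in $H$, contradicting maximality. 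For condition~3, if the colour-steering property fails for $i,u,v$, I insert a colour-$i$ vertex $w$ adjacent to $v$ and lying on a shortest $uv$-path at distance $d_G(u,v)-1$ from $u$; its image would be a colour-$i$ neighbour of $v$ on a shortest $uv$-path, which was assumed not to exist. In each case the only genuine verification is that the augmented graph remains isometric, isochromatic and $k$-chromatic, which is routine once the neighbourhood and colour of $w$ are chosen to extend $c$ and match the required distances.

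\textbf{Sufficiency.} Assuming conditions 1--3, given an isometric isochromatic embedding $H \hookrightarrow G$ into a $k$-chromatic $G$, I would construct a retraction by well-ordering $V(G)\setminus V(H)$ and extending the partial map $f$ (initially the identity on $H$) one vertex at a time. To map a new vertex $w$, I seek an image $f(w) \in V(H)$ of colour $c(w)$ adjacent in $H$ to the images of all already-mapped neighbours of $w$ and, more generally, lying within the balls $N_H^{d_G(w,h)}[h]$ forced by isometry over the processed vertices $h$. Condition~1, the colour-$c(w)$ Helly property, produces such a common point \emph{provided the relevant balls pairwise intersect in colour $c(w)$}; these pairwise intersections are exactly what conditions~2 and~3 supply --- condition~2 when two constraints come from an edge or clique (a maximal clique carries all $k$ colours, hence a colour-$c(w)$ vertex adjacent to both), and condition~3 when they come from a distance-two or longer geodesic (a common neighbour, resp. an intermediate vertex, of colour $c(w)$ on a shortest path). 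Fixing $H$ makes $f$ restrict to the identity, the homomorphism property is preserved at each step, and the final $f$ is the desired retraction.

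\textbf{Main obstacle.} I expect the delicate part to be the sufficiency direction, specifically the bookkeeping that upgrades the \emph{pairwise} hypotheses of conditions~2 and~3 into the \emph{whole-family} Helly property of condition~1 needed to place $f(w)$: one must verify that every pair of ball-constraints on $f(w)$ --- whether arising from edges of $H$, from newly added edges, or from isometric distance requirements --- intersects in colour $c(w)$, so that condition~1 applies to the entire family at once. A clean way to control this is to process $V(G)\setminus V(H)$ by non-decreasing distance from $H$, keeping each new vertex's constraint set small, or alternatively to invoke Theorem~\ref{thm:pesch-characterization} and induct on $|V(G)|$ by peeling off covered peripheral vertices, with condition~3 certifying the pairwise colour-$c(w)$ intersections along shortest paths.
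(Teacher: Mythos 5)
The paper does not prove this statement at all: it is imported verbatim from~\cite{BaP99} as a black box, so there is no in-paper argument to compare yours against. Judged on its own, your proposal follows the standard architecture for such characterizations (one-vertex/one-path extension gadgets for necessity, greedy Helly-based extension of the identity for sufficiency), but both directions have genuine gaps where the actual content of the theorem lives.

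For necessity of condition~1 you ``attach a vertex $w$ at the prescribed distances $r_j$ from the ball centres $v_j$.'' A single added vertex $w$ with neighbourhood $S\subseteq V(H)$ satisfies $d(w,v_j)=1+d_H(S,v_j)$ and forces all of $S$ to lie pairwise within distance two for $H$ to stay isometric, so arbitrary radii $r_j\ge 2$ cannot be realized this way; you must attach paths of length $r_j$ from $w$ to each $v_j$ (isometry of $H$ then follows exactly from the pairwise intersection hypothesis, since $d_H(v_j,v_l)\le r_j+r_l$). More seriously, in the chromatic setting you must also force $c(w)=i$ in \emph{every} proper $k$-coloring of the augmented graph extending $c|_H$ (otherwise $f(w)$ need not have colour $i$: the mechanism is that $c\circ f$ is a proper coloring of $G$ restricting to $c$ on $H$), and you must colour all the new path vertices while keeping the extension $k$-chromatic and isochromatic. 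This colour-forcing requires extra gadgets (e.g.\ $(k-1)$-cliques attached along the paths) and is exactly why condition~3 carries the caveats ``$c(v)\neq i$ and either $c(u)\neq i$ or $d_G(u,v)\ge 3$''; calling it routine skips the heart of the argument. In the sufficiency direction you correctly name the obstacle but do not resolve it: the greedy extension needs a maintained invariant ($f(w)\in N_H^{d_G(w,v)}[v]$ for all $v\in V(H)$, plus adjacency to images of already-mapped neighbours), it needs to know which colour $c(w)$ to aim for --- which presupposes that the conditions force $H$ (and hence $G$) to be uniquely $k$-colorable, a nontrivial fact from~\cite{PeP85} --- and the case analysis showing that every pair of these ball constraints meets in colour $c(w)$ (using conditions~2 and~3 together with isometry of $H$ in $G$) is the bulk of the proof and is entirely deferred. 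As it stands the proposal is a plausible roadmap to the argument in~\cite{BaP99}, not a proof.
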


Let $G=(V,E)$ be equipped with a proper $k$-coloring $c$. For every colour $i$, let $V_i := \{ v \in V \mid c(v) = i \}$ be called a colour class. We define, for every $v \in V_i$, $e_i(v) := \max\{ d_G(u,v) \mid u \in V_i \}$. A vertex $v \in V_i$ is $i$-peripheral if it maximizes $e_i(v)$. Finally, let $d_i := \max\{ e_i(v) \mid v \in V_i \}$. We now generalize Lemma~\ref{lem:diam-bipartite}, as follows:

\begin{lemma}\label{lem:diam-kchrom}
Let $G=(V,E)$ be an absolute retract of $k$-chromatic graphs for some $k \geq 3$, and let $c$ be a corresponding proper $k$-coloring. Then, $\max_{1 \leq i \leq k} d_i \leq diam(G) \leq 1 + \max_{1 \leq i \leq k} d_i$. Moreover, if $diam(G) \geq 3$, then we have $diam(G) = 1 + \max_{1 \leq i \leq k} d_i$ if and only if:
\begin{itemize}
\item either $\max_{1 \leq i \leq k} d_i = 2$;
\item or, for some $i \neq j$ s.t. $d_i = d_j$ is maximized, there is some $i$-peripheral vertex whose all neighbours coloured $j$ are $j$-peripheral. 
\end{itemize}
\end{lemma}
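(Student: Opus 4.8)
The plan is to mirror the structure of the proof of Lemma~\ref{lem:diam-bipartite}, replacing the two-partition argument by an argument over the $k$ colour classes, and using Theorem~\ref{thm:absolute-retract} in place of Theorem~\ref{thm:bipartite-retract}. First I would establish the easy double inequality $\max_i d_i \leq diam(G) \leq 1 + \max_i d_i$. The lower bound is immediate, since for any colour $i$ and any two vertices $u,v \in V_i$ we have $d_G(u,v) \leq e_i(v) \leq d_i$, and some pair realizes $d_i$; hence $diam(G) \geq \max_i d_i$. For the upper bound I would argue that every vertex $v$, say of colour $i$, has eccentricity at most $1 + \max_i d_i$: any target vertex $u$ of colour $j$ is at distance at most $d_j \leq \max_i d_i$ from a suitable same-coloured vertex, but more directly, using condition~3 of Theorem~\ref{thm:absolute-retract} (or simply that a maximal clique through $v$ has all $k$ colours by condition~2), $v$ has a neighbour of every colour $\neq c(v)$, so from any $u$ one can reach a neighbour of $v$ of colour $c(u)$ and then step to $v$, giving $d_G(u,v) \leq 1 + e_{c(u)}(\text{that neighbour}) \leq 1 + \max_i d_i$. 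I would state this genericity carefully, since unlike the bipartite case a vertex need not dominate the whole graph, so the key is that condition~2 forces a same-coloured companion adjacent to $v$ for each colour.

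Next I would handle the characterization of when $diam(G) = 1 + \max_i d_i$ under the assumption $diam(G) \geq 3$. Set $d := \max_i d_i$. The case $d \leq 1$ cannot arise with $diam(G) \geq 3$, and the $d = 2$ case I would dispatch directly (analogous to the $diam(H_1)=1$ case in Lemma~\ref{lem:diam-bipartite}): if $d = 2$ then $diam(G) \leq 3$, and combined with $diam(G) \geq 3$ we get $diam(G) = 3 = 1 + d$. So the substance is the case $d \geq 3$, where I must show that $diam(G) = 1 + d$ holds if and only if there exist distinct colours $i,j$ with $d_i = d_j = d$ and an $i$-peripheral vertex all of whose $j$-coloured neighbours are $j$-peripheral.

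For the forward direction, suppose $diam(G) = 1 + d$ and let $v$, with $c(v) = i$, be a peripheral vertex of $G$. Since $e_G(v) = 1 + d$ is odd-offset from the same-colour eccentricity, I would show first that $e_i(v) = d_i = d$: indeed any $u$ with $c(u) = i$ satisfies $d_G(u,v) \leq e_i(v)$, and if $e_i(v) < d$ then using the clique-companion argument every vertex would be within distance $d$ of $v$, contradicting $e_G(v) = d+1$; this forces $d_i = d$. Then I would pick a vertex $u$ realizing $d_G(u,v) = d+1$, let $j = c(u)$ (necessarily $j \neq i$ since $d+1$ is one more than the same-colour bound, so $u$ cannot share $v$'s colour), and show every $j$-coloured neighbour $w$ of $v$ must satisfy $e_j(w) = d$, i.e.\ is $j$-peripheral, and that $d_j = d$. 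The conjunction yields the required configuration. The delicate point here, which is the main obstacle, is that the partition into $k \geq 3$ classes no longer lets me read off eccentricities from a single ``half-graph'' $H_i$ as in the bipartite case; I must instead work directly with the partial functions $e_i$ and lean on the per-colour Helly property (condition~1 of Theorem~\ref{thm:absolute-retract}) to control which neighbours of $v$ can be close to a given colour class.

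The converse is where the Helly-type argument does the real work, paralleling the final paragraph of the proof of Lemma~\ref{lem:diam-bipartite}. Assume $d_i = d_j = d$ and that some $i$-peripheral $v$ has all its $j$-coloured neighbours $j$-peripheral. Suppose for contradiction that $e_G(v) = d$ rather than $d+1$. Then every vertex of colour $j$ lies within distance $d-1$ of $v$, so the ``$j$-coloured ball of radius $d$'' centred at $v$ meets every $j$-coloured ball $N_G^{d}[w]$ in colour $j$; since $d \geq \max_{x,y \in V_j} d_G(x,y)$ these $j$-coloured balls pairwise intersect in colour $j$ as well, because $d = d_j$ bounds the pairwise $j$-distances. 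Invoking condition~1 of Theorem~\ref{thm:absolute-retract} for colour $j$, the whole family has a common point of colour $j$, which I would argue can be taken to be a $j$-coloured neighbour $w$ of $v$ with $e_j(w) \leq d-1$. This contradicts the hypothesis that all $j$-coloured neighbours of $v$ are $j$-peripheral (eccentricity exactly $d$). Hence $e_G(v) = d+1$ and $diam(G) = 1 + d$. The only care needed is to set up the radii of the balls correctly so that the single-colour Helly property applies verbatim, and to ensure the common intersection point is indeed adjacent to $v$; I expect this radius bookkeeping, together with the asymmetry introduced by allowing $i = j$ versus $i \neq j$, to be the fiddliest part of the write-up, so I would state the relevant inequalities $d-1 \geq d_j$-bounds explicitly before applying Theorem~\ref{thm:absolute-retract}.
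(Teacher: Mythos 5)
Your double inequality and your forward direction are essentially the paper's argument: the upper bound comes from each colour class being dominating (via condition~2 of Theorem~\ref{thm:absolute-retract}), and the forward direction chains $d_G(u,v) \leq 1 + d_G(u,x) \leq 1 + e_j(x) \leq 1 + d_j$ over the $j$-coloured neighbours $x$ of a peripheral vertex $v$. The problem is in your converse. You write that if $e_G(v) = d$ then ``every vertex of colour $j$ lies within distance $d-1$ of $v$.'' This is a direct import of the bipartite parity argument from Lemma~\ref{lem:diam-bipartite} (where $v \in V_i$ and targets in $V_{1-i}$ are at odd distance, so $< 2\,diam(H_1)+1$ forces $\leq 2\,diam(H_1)-1$), and it has no analogue for $k \geq 3$: from $e_G(v) \leq d$ you only get distance $\leq d$, and a common Helly point in balls of radius $d$ around the vertices of $V_j$ gives $e_j \leq d = d_j$, which is no contradiction. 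Moreover a radius-$d$ ball centred at $v$ does not localize the Helly point to a \emph{neighbour} of $v$. The correct family is $N_G^1[v]$ together with $N_G^{d-1}[x]$ for $x \in V_j$; the pairwise intersections in colour $j$ then have to be established not by parity but via condition~3 of Theorem~\ref{thm:absolute-retract} (a $j$-coloured neighbour of $v$ on a shortest $vx$-path when $d_G(v,x) \geq 3$) and Lemma~\ref{lem:coloured-neighbour}-type arguments for pairs $x,x' \in V_j$, including a separate treatment of $d_G(x,x') = 3$ and of the short-distance cases via maximal cliques. This is precisely the ``radius bookkeeping'' you defer, and it is where the actual work lies; as stated, the step fails.

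It is also worth noting that the paper proves the converse by an entirely different route that avoids this Helly bookkeeping altogether: it takes a minimum counterexample, observes that a $j$-coloured neighbour $u$ of the $i$-peripheral vertex $v$ would be a peripheral vertex of $G$, invokes Theorem~\ref{thm:pesch-characterization} to conclude that $u$ is covered and $G \setminus u$ is again an absolute retract with the same $d_i$ and the same diameter, and then derives a contradiction from minimality (the surviving $j$-coloured neighbour $u'$ of $v$ would have $e_j(u') \leq d_i - 1$). Your direct Helly approach can very likely be repaired along the lines sketched above and would then be self-contained within Theorem~\ref{thm:absolute-retract}, but you should either carry out that case analysis in full or switch to the deletion argument; the proposal as written does not close the converse.
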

\begin{proof}
Clearly, we have $\max_{1 \leq i \leq k} d_i \leq diam(G)$. In order to prove that we also have $diam(G) \leq 1 + \max_{1 \leq i \leq k} d_i$, it suffices to prove that each colour class $V_i$ is a dominating set of $G$. For that, consider any vertex $v \in V \setminus V_i$. Let $K$ be a maximal clique containing $v$. By Theorem~\ref{thm:absolute-retract}, we have $|K| = k$, and therefore, $K \cap V_i \neq \emptyset$. In particular, $N_G(v) \cap V_i \neq \emptyset$, as desired.

In what follows, $diam(G) \geq 3$. Then, if  $\max_{1 \leq i \leq k} d_i = 2$, we must have $diam(G) = 1 + \max_{1 \leq i \leq k} d_i = 3$. From now on, we assume $diam(G) \geq \max_{1 \leq i \leq k} d_i \geq 3$. 

Let us first assume $diam(G) = 1 + \max_{1 \leq i \leq k} d_i$. Consider some peripheral vertex $v$ of $G$, and let $c(v) = i$. Since $V_i$ is a dominating set of $G$, $e_G(v) \leq 1+e_i(v) \leq 1 + d_i$. In particular, we must have $d_i$ is maximized and $v$ is $i$-peripheral (otherwise, $e_G(v) < diam(G)$). We pick some $u \in V$ such that $d_G(u,v) = diam(G)$, and let $c(u) = j$. Note that $j \neq i$ (otherwise, $d_G(u,v) \leq d_i < diam(G)$). Let $x \in N_G(v) \cap V_j$ be arbitrary. Observe that we always have $d_G(u,v) \leq 1 + d_G(u,x) \leq 1 + e_j(x) \leq 1 + d_j$. As a result, $d_j = d_i$ is maximized, and any neighbour $x$ of vertex $v$ with $c(x) = j$ is $j$-peripheral.

Conversely, suppose by contradiction this above necessary condition for having $diam(G) = 1 + \max_{1 \leq i \leq k} d_i$ is not sufficient on its own for the absolute retracts of $k$-chromatic graphs. Without loss of generality, amongst all the absolute retracts of $k$-chromatic graphs, $G$ is a minimum counter-example for which the condition holds and $diam(G) = \max_{1 \leq i \leq k} d_i$. For some $i \neq j$ such that $d_i = d_j$ is maximized, let $v$ be a $i$-peripheral vertex whose all neighbours coloured $j$ are $j$-peripheral, and let $u \in N_G(v)$ be such that $c(u) = j$. Then, $e_G(u) = d_j = d_i = diam(G)$. Observe that $u$ cannot be adjacent to all the vertices coloured $i$ (otherwise, $d_i = 2 < 3$). Therefore, by Theorem~\ref{thm:pesch-characterization}, $u$ is covered and $G' = G \setminus u$ is an absolute retract of $k$-chromatic graphs. Let $V_1',V_2',\ldots,V_k'$ be the colour classes of $G'$ (induced by the coloring $c$ restricted on $V \setminus \{u\}$). Observe that for $p \neq j$ we have $V_p' = V_p$, while $V_j' = V_j \setminus \{u\}$. Furthermore, let $d_1',d_2',\ldots,d_k'$ be the largest distances between vertices in a same colour class of $G'$. Since $G'$ is isometric, $d_p' \leq d_p$ for all $p$. But, since $V_i' = V_i$, we get $d_i' = d_i$, and therefore, $\max_p d_p' = \max_p d_p = d_i = diam(G)$. It implies $diam(G') = diam(G) = \max_p d_p'$. By minimality of $G$, the subgraph $G'$ is not a counter-example. Hence, vertex $v$ must have some neighbour $u' \neq u$ coloured $j$ s.t. $\max\{ d_G(u',x) \mid x \in V_j \setminus \{u\} \} \leq d_i-1$. However, it implies $e_j(u') \leq \max\{d_G(u',u),d_i - 1\} = \max\{2,d_i-1\} = d_i-1$. A contradiction.   
\end{proof}

\subsection{Computation of the $d_i$'s}\label{sec:di}

Our strategy is as follows. First, we prove in Lemma~\ref{lem:reduction-3} that we can reduce our study to the case $k=3$. In Lemmas~\ref{lem:small-diam} and~\ref{lem:large-diam-2}, respectively, we deal with the cases $d_i = {\cal O}(\sqrt{n})$ and $d_i = \Omega(\sqrt{n})$, respectively.

\paragraph{Reduction.}
The next Lemma~\ref{lem:reduction-3} allows us to reduce the general case to $k=3$.

\begin{lemma}\label{lem:reduction-3}
Let $G=(V,E)$ be an absolute retract of $k$-chromatic graphs for some $k \geq 3$, and let $c$ be a corresponding proper $k$-coloring.
For every distinct colours $i_1,i_2,i_3$, the subgraph $H := G[V_{i_1} \cup V_{i_2} \cup V_{i_3}]$ is isometric. Moreover, $H$ is an absolute retract of $3$-chromatic graphs. 
\end{lemma}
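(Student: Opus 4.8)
The plan is to prove the two claims separately, treating isometry first and then the absolute-retract property. Both will lean on the characterization of absolute retracts of $k$-chromatic graphs in Theorem~\ref{thm:absolute-retract}, together with the structural recognition result in Theorem~\ref{thm:pesch-characterization}.

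For \textbf{isometry}, I must show that for any two vertices $x,y \in V_{i_1} \cup V_{i_2} \cup V_{i_3}$, we have $d_H(x,y) = d_G(x,y)$; the inequality $d_H(x,y) \geq d_G(x,y)$ is automatic since $H$ is a subgraph. First I would take a shortest $xy$-path $P$ in $G$ and argue that it can be rerouted to stay inside the three colour classes $V_{i_1}, V_{i_2}, V_{i_3}$ without increasing its length. The natural tool is condition~3 of Theorem~\ref{thm:absolute-retract}: whenever a vertex on the path is coloured with some colour $\ell \notin \{i_1,i_2,i_3\}$, I want to replace it by a neighbour coloured in the allowed palette that still lies on a shortest path. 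Concretely, I would process the path and whenever it leaves the allowed colours, invoke condition~3 (applied with an appropriate target colour among $\{i_1,i_2,i_3\}$) to find a same-length detour; an induction on the number of ``forbidden-colour'' vertices on the path should drive this to completion. The delicate point is choosing \emph{which} of the three allowed colours to target at each step so that consecutive substitutions remain mutually consistent (adjacent vertices keep distinct colours and the rerouted path stays geodesic). This is exactly where I expect the \textbf{main obstacle}: condition~3 guarantees a recolouring neighbour on a shortest path toward one endpoint, but I must verify that iterating these local moves terminates with a genuine $xy$-path of the same length whose vertices all lie in $V_{i_1}\cup V_{i_2}\cup V_{i_3}$, and that no colour conflict or shortcut/lengthening is introduced.

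For the \textbf{absolute-retract property of $H$}, the cleanest route is to verify that $H$, equipped with the restricted $3$-colouring $c|_{V(H)}$, satisfies the three conditions of Theorem~\ref{thm:absolute-retract} with $k=3$. Since $H$ is isometric in $G$ (just proved), balls of $H$ are traces of balls of $G$ on $V(H)$, so condition~1 (the Helly-in-each-colour property) for $H$ should follow from condition~1 for $G$ restricted to colours $i_1,i_2,i_3$. For condition~2, I would argue that every maximal clique of $H$ has size exactly $3$: a clique of $H$ is a clique of $G$ using only the three colours, and maximality within $H$ plus condition~2 for $G$ (maximal cliques have size $k$, one vertex per colour) should force it to contain exactly one vertex of each of $i_1,i_2,i_3$. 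Condition~3 for $H$ should transfer from condition~3 for $G$ using isometry (shortest paths agree) together with the fact that the rerouting from the isometry argument already produces the required same-coloured neighbour inside $H$.

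I should be careful about one subtlety: Theorem~\ref{thm:absolute-retract} requires the relevant graph to actually be $3$-chromatic (not, say, $2$-chromatic), and the definition in the paper insists a $(k-1)$-chromatic graph is \emph{not} $k$-chromatic. So I would add a short remark that $H$ genuinely uses all three colours and has chromatic number exactly $3$: since every maximal clique of $G$ has size $k \geq 3$ and meets each colour class, picking such a clique and restricting to colours $i_1,i_2,i_3$ exhibits a triangle in $H$, forcing $\chi(H) \geq 3$, while the proper $3$-colouring $c|_{V(H)}$ gives $\chi(H) \leq 3$. With chromaticity pinned down and the three conditions verified, Theorem~\ref{thm:absolute-retract} yields that $H$ is an absolute retract of $3$-chromatic graphs, completing the proof.
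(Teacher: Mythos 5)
Your treatment of the second claim is essentially the paper's proof: verify the three conditions of Theorem~\ref{thm:absolute-retract} for $H$ with $k=3$, deducing the colour-wise Helly property from that of $G$ together with isometry, the clique condition from the fact that every maximal $k$-clique of $G$ meets each colour class exactly once, and condition~3 by transfer along shortest paths. Your extra remark pinning down $\chi(H)=3$ (via a triangle obtained by restricting a maximal $k$-clique to the three chosen colours) is a point the paper leaves implicit, and it is correct.

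The isometry half, however, is left with an acknowledged gap, and the difficulty you flag is real for the argument as you set it up: condition~3 only produces a replacement neighbour \emph{of an endpoint} on a shortest path between two specified vertices, so there is no clean way to swap out an internal forbidden-coloured vertex of a fixed geodesic (its two neighbours on the path may themselves be forbidden-coloured), and an induction on the number of such vertices does not obviously terminate consistently. The paper sidesteps this entirely by inducting on the distance: take a counterexample pair $u,v\in V(H)$ with $d_G(u,v)$ minimum. Since $H$ is induced, $d_G(u,v)\geq 2$, so $u$ and $v$ are non-adjacent; choose $j\in\{i_1,i_2,i_3\}\setminus\{c(u),c(v)\}$, which is nonempty because at most two of the three allowed colours are excluded, and note that this choice makes both hypotheses of condition~3 hold. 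That condition yields $x\in N_G(v)\cap V_j$ on a shortest $uv$-path; then $x\in V(H)$, $vx\in E(H)$ because $H$ is induced, and $d_G(u,x)=d_G(u,v)-1$, so minimality gives $d_H(u,x)=d_G(u,x)$ and hence $d_H(u,v)\leq 1+d_G(u,x)=d_G(u,v)$, a contradiction. In other words, you never reroute a whole path: you peel off a single vertex at one endpoint and recurse on a strictly closer pair lying in $V(H)$, so the ``consistency of consecutive substitutions'' obstacle you anticipate never arises.
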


\begin{proof}
Suppose by contradiction $H$ is not isometric in $G$.
Let $u,v \in V(H)$ be such that no shortest $uv$-path of $G$ is contained in $H$, and $d_G(u,v)$ is minimum for this property. Since $H$ is induced, $d_G(u,v) > 1$. But then, let $j \in \{i_1,i_2,i_3\} \setminus \{c(u),c(v)\}$. By Theorem~\ref{thm:absolute-retract}, there is a neighbour $x$ of $v$ that is on a shortest $uv$-path of $G$ and such that $c(x) = j$. In particular, $x \in V(H)$. Since $H$ is induced, we have $vx \in E(H)$. However, by minimality of $d_G(u,v)$ we also have $d_H(x,u) = d_G(x,u)$, and therefore $d_H(u,v) \leq 1 + d_H(x,u) = 1 + d_G(x,u) = d_G(v,u)$. A contradiction.

Now, let us prove that $H$ is an absolute retract of $3$-chromatic graphs. For that, it suffices to prove that $H$ satisfies the three properties stated in Theorem~\ref{thm:absolute-retract}. -- Note that, since $c$ is the unique proper $k$-coloring of $G$, the restriction of $c$ to $H$ is the unique proper $3$-coloring of this subgraph. --
\begin{enumerate}
\item Let $j \in \{i_1,i_2,i_3\}$ be fixed. Consider some family of balls in $H$, denoted $N_H^{r_1}[v_1], N_H^{r_2}[v_2],\ldots,\\N_H^{r_q}[v_q]$. Let us assume these balls pairwise intersect in colour $j$, {\it i.e.}, for every $1 \leq a,b \leq q$ we have $N_H^{r_a}[v_a] \cap N_H^{r_b}[v_b] \cap V_j \neq \emptyset$. Since $H$ is a subgraph of $G$, the balls $N_G^{r_1}[v_1], N_G^{r_2}[v_2],\ldots,N_G^{r_q}[v_q]$ also pairwise intersect in colour $j$ (in $G$). By Theorem~\ref{thm:absolute-retract}, there exists a vertex $z_j \in V_j \cap \left( \bigcap \{ N_G^{r_a}[v_a] \mid 1 \leq a \leq q \}\right)$. Since $V_j \subseteq V(H)$, and $H$ is isometric in $G$, we get $z_j \in V_j \cap \left( \bigcap \{ N_H^{r_a}[v_a] \mid 1 \leq a \leq q \}\right)$.  
\item Let $K$ be a maximal clique of $H$. We have $K \subseteq K'$ where $K'$ is a maximal clique of $G$. By Theorem~\ref{thm:absolute-retract}, $|K'| = k$. It implies that every colour class intersects $K'$. Hence, $|K| = |K' \cap (V_{i_1} \cup V_{i_2} \cup V_{i_3})| = 3$. 
\item Finally, let $u,v \in V(H)$ and let $j \in \{i_1,i_2,i_3\}$ be such that $c(v) \neq j$ and either $c(u) \neq j$ or $d_H(u,v) \geq 3$. Since $H$ is isometric, either $c(u) \neq j$ or $d_G(u,v) \geq 3$. By Theorem~\ref{thm:absolute-retract}, there exists a neighbour $x$ of $v$ coloured $j$ that is on a shortest $uv$-path in $G$. Since $x \in V(H)$ and $H$ is isometric, $x$ is also on a shortest $uv$-path in $H$. 
\end{enumerate}
Overall, all three properties of Theorem~\ref{thm:absolute-retract} are satisfied by $H$. It implies that $H$ is an absolute retract of $3$-chromatic graphs. 
\end{proof}

\paragraph{Small-diameter case.} Next, we deal with the case when $d_i$ is sufficiently small.

\begin{lemma}\label{lem:small-diam}
Let $G=(V,E)$ be an absolute retract of $3$-chromatic graphs, and let $c$ be a corresponding proper $3$-coloring. For each colour $i$ and $D \geq 2$, we can compute in ${\cal O}(Dm)$ time the set $U_i := \{ v \in V_i \mid e_i(u) \leq D \}$.   
\end{lemma}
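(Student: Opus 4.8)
The plan is to mimic the partition-refinement algorithm of Lemma~\ref{lem:cst-ecc}, replacing the bipartite half-ball Helly property by the colour-$i$ Helly property (the first property of Theorem~\ref{thm:absolute-retract}). Observe first that $v \in U_i$ if and only if $v \in V_i \cap \bigcap_{u \in V_i} N_G^D[u]$, so that computing $U_i$ amounts to computing the common intersection, \emph{within colour $i$}, of the $D$-balls centred at the vertices of $V_i$. For a subset $A \subseteq V_i$ and a radius $t$, write $C^t(A) = \bigcap_{v \in A} N_G^t[v]$. I would maintain, by induction on $t$ from $0$ up to $D$, a partition of the centre set $V_i$ into classes $A_1^t, \dots, A_{p_t}^t$ whose witnesses $C^t(A_j^t)$ are \underline{nonempty} and \underline{pairwise disjoint}. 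At the end, $U_i = V_i \cap C^D(V_i)$, which (exactly as in Lemma~\ref{lem:cst-ecc}) equals the colour-$i$ section of the unique surviving witness when $p_D = 1$, and is empty otherwise, since two disjoint witnesses cannot share a common colour-$i$ vertex.

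The inductive step rests on the growth recursion $C^t(A) = N_G[C^{t-1}(A)]$. The inclusion ``$\supseteq$'' is immediate from $N_G[N_G^{t-1}[v]] = N_G^t[v]$. For ``$\subseteq$'', given a vertex $w$ within distance $t$ of every $v \in A$, I must produce a vertex of $N_G[w]$ lying in $C^{t-1}(A)$; if $w$ itself lies in $C^{t-1}(A)$ this is trivial, so assume otherwise. Picking a colour $\gamma$ present in the nonempty set $C^{t-1}(A)$, the balls $\{N_G^{t-1}[v]\}_{v \in A}$ pairwise intersect in colour $\gamma$, and I claim each of them also meets $N_G[w]$ in colour $\gamma$: a $\gamma$-coloured neighbour of $w$ on a shortest $wv$-path does the job. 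Its existence is exactly what the third property of Theorem~\ref{thm:absolute-retract} guarantees (rerouting a shortest path through a prescribed colour) once $\gamma$ is taken different from $i = c(v)$, using in addition that every vertex lies in a maximal clique of size $k = 3$ (the second property) and hence has neighbours of both remaining colours. The first property of Theorem~\ref{thm:absolute-retract} then yields a common colour-$\gamma$ vertex in $N_G[w] \cap C^{t-1}(A)$, as desired.

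The hard part is precisely this recursion, and it is where the loss of the bipartite parity structure bites. In Lemma~\ref{lem:cst-ecc} the witness automatically flips between the two partite sets and the relevant neighbour on a shortest path is \emph{forced} to have the correct colour; here neighbours are multi-coloured and the colour-$\gamma$ vertex must be recovered through the rerouting property, which constrains the admissible choice of $\gamma$. I expect to have to treat several degenerate configurations by hand: equal or adjacent $w$ and $v$ (where the rerouting hypotheses of Theorem~\ref{thm:absolute-retract} fail and one argues directly inside a size-$3$ maximal clique), small radii $t \le 2$, and above all the case where $C^{t-1}(A)$ happens to be monochromatic of colour $i$, so that no admissible $\gamma \neq i$ exists and a separate argument for the step is needed. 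This colour bookkeeping is the main obstacle of the proof.

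The remaining ingredients transfer from Lemma~\ref{lem:cst-ecc} essentially unchanged. To pass from step $t-1$ to step $t$, I compute the candidate witnesses $W_j^t := N_G[C^{t-1}(A_j^t)]$ in total linear time (the sets $C^{t-1}(A_j^t)$ being pairwise disjoint), then repeatedly select a vertex $u$ maximizing $\#\{ j \mid u \in W_j^t\}$ over the classes still to be processed, merge into a single new class all $A_j^t$ with $u \in W_j^t$, and set its witness to $\bigcap\{ W_j^t \mid u \in W_j^t\}$; since merging classes corresponds to taking $C^t(\bigcup_l A_{j_l}^t) = \bigcap_l C^t(A_{j_l}^t)$, this is consistent with the recursion, and pairwise disjointness of the new witnesses follows from the maximality of $u$ exactly as in Claim~\ref{claim:k=2}. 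Bucketing the vertices by their hitting count makes each step run in $\mathcal{O}(m)$ time, for a total of $\mathcal{O}(Dm)$ time, as claimed.
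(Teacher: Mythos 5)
Your overall architecture matches the paper's: partition refinement over the colour class of centres, with the colour-Helly property (property~1 of Theorem~\ref{thm:absolute-retract}) replacing the half-ball Helly property and property~3 supplying the rerouted neighbour. However, the step you yourself single out as ``the main obstacle'' is not a degenerate corner case to be handled separately --- it is the technical heart of the proof, and you have not supplied it. Concretely, your growth recursion $C^t(A)=N_G[C^{t-1}(A)]$ needs, for each $w\in\bigcap_{v\in A}N_G^t[v]$, a colour $\gamma$ that is (a) present in $C^{t-1}(A)$, (b) different from $i=c(v)$ for the centres (to discharge the ``$c(u)\neq i$ or $d_G(u,v)\ge 3$'' clause), and (c) different from $c(w)$ --- note that in property~3 the constraint $c(\cdot)\neq\gamma$ falls on the vertex \emph{whose neighbour you are extracting}, i.e.\ on $w$, not on the centre $v$ as your write-up suggests. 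With only three colours, when $c(w)\neq i$ the colour $\gamma$ is forced to be the unique third colour, so you need to know that \emph{that specific colour} occurs in $C^{t-1}(A)$; mere nonemptiness of $C^{t-1}(A)$ gives you nothing. The fact that rescues the argument --- that for $A\subseteq V_i$ the balls $N_G^{t}[a]$, $a\in A$, have a common vertex of one colour if and only if they have one of every colour (with the appropriate adjustment at $t=1$) --- is true but requires a multi-case argument of its own; it is exactly the content of the paper's Claims~\ref{claim-1} and~\ref{claim-4}, and it is why the paper tracks a separate witness $B_t[A,j]$ for \emph{each} colour $j$ (with the growth sets $W(A,1)$, $W(A,2)$, $W(A,3)$ each defined through a designated source colour) rather than a single full-intersection witness. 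Until you prove this colour-transfer statement, your inductive step does not go through.

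The surrounding machinery (the reduction of $U_i$ to $V_i\cap C^D(V_i)$, the greedy merging by maximal hitting count, the disjointness argument \`a la Claim~\ref{claim:k=2}, and the ${\cal O}(m)$-per-level bucketing) transfers correctly and matches the paper. But as written, the proposal identifies the crucial lemma without proving it, so it is incomplete rather than wrong.
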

\begin{proof}
It suffices to prove the result for $i=1$.
For that, we generalize the techniques used in Lemma~\ref{lem:cst-ecc}.
Specifically, for every $1 \leq t \leq D$, the objective is to compute a partition of $V_1$, denoted ${\cal P}_t$, subject to the following property: for each colour $i$, the $|{\cal P}_t|$ subsets $B_t[A,i] := V_i \cap \left( \bigcap \{ N_G^t[a] \mid a \in A \} \right)$, for all $A \in {\cal P}_t$, must be pairwise disjoint and, if $i \neq 1$ or $t \geq 2$, these sets must also be nonempty. Being given any such partition ${\cal P}_D$, we observe that $U_1 \neq \emptyset$ if and only if ${\cal P}_D = (V_1)$ and, if it is the case, then $U_1 = B_D[V_1,1]$.  

\smallskip
{\bf The algorithm.} For the base case $t=1$, let $Y := V_1$. While $Y \neq \emptyset$, we select some vertex $x \in V_2$ such that $|N_G(x) \cap Y|$ is maximized, we add $N_G(x) \cap Y$ as a new group in ${\cal P}_1$, then we set $Y := Y \setminus N_G(x)$. Finally, we enumerate all groups $A \in {\cal P}_1$, and we scan the neighbourhoods of all vertices $a \in A$ in order to compute the sets $B_1[A,i]$ for each colour $i$ (note that $B_1[A,1] = A$ if $|A|=1$ and $B_1[A,1] = \emptyset$ otherwise). 

At step $t+1$, for each group $A \in {\cal P}_t$, let us define:
$$W(A,1) := N_G(B_t[A,2]) \cap V_1,$$
$$W(A,2) := N_G(B_t[A,3]) \cap V_2,$$
$$W(A,3) := N_G(B_t[A,2]) \cap V_3.$$ 
Let ${\cal F} := {\cal P}_t$. While ${\cal F} \neq \emptyset$, we proceed as follows. We pick some $v \in V_1$ such that $\#\{ A \in {\cal F} \mid v \in W(A,1) \}$ is maximized. Let $A' := \bigcup \{ A \in {\cal F} \mid v \in W(A,1) \}$. We add $A'$ to ${\cal P}_{t+1}$, for each colour $i$ we set $B_{t+1}[A',i] := \bigcap \{ W(A,i) \mid A \in {\cal F}, \ v \in W(A,1) \}$, and we set ${\cal F} := {\cal F} \setminus \{ A \in {\cal F} \mid v \in W(A,1) \}$. 

\smallskip
{\bf Correctness.} We first consider the base case $t=1$. Clearly, the sets $B_1[A,1]$ (equal to either $A$, if it is a singleton, and to the empty set otherwise) are pairwise disjoint. Since at every step, in order to create a new group, we pick a vertex of $V_2$ with a maximum number of unselected vertices in $V_1$, it follows by construction that the sets $B_1[A,2]$ are nonempty. By maximality of the vertex $V_2$ selected at each step, it also follows that all these sets $B_1[A,2]$ are pairwise disjoint (this is exactly the same argument as the one used for Claim~\ref{claim:k=2}, in Lemma~\ref{lem:cst-ecc}). The next claim shows that both properties ({\it i.e.}, being nonempty and pairwise disjoint) also hold for the sets $B_1[A,3]$.  
\begin{myclaim}\label{claim-1}
For $A \subseteq V_1$, the neighbour-sets $N_G(a), \ a \in A$ intersect in colour $2$ if and only if they intersect in colour $3$.
\end{myclaim}
\begin{proofclaim}
Let us assume the balls $N_G[a], \ a \in A$ intersect in colour $2$ (resp., in colour $3$). In particular, the vertices of $A$ are pairwise at distance two. By the third property of Theorem~\ref{thm:absolute-retract}, the balls $N_G[a], \ a \in A$ pairwise intersect in colour $3$ (resp., in colour $2$), and therefore the claim now follows from the first (Helly-type) property of this Theorem~\ref{thm:absolute-retract}. 
\end{proofclaim}
From now on, we consider the inductive step ({\it i.e.}, from $t$ to $t+1$). 
We need the following claim for our analysis:
\begin{myclaim}\label{claim-3}
For each colour $i$, $t \geq 1$ and $A \in {\cal P}_t$, we have $W(A,i) = V_i \cap \left( \bigcap \{ N_G^{t+1}[a] \mid a \in A \mid \} \right)$.
\end{myclaim}
\begin{proofclaim}
By construction, $W(A,i)\subseteq V_i \cap \left( \bigcap \{ N_G^{t+1}[a] \mid a \in A \} \right)$. Conversely, let $u \in \bigcap \{ N_G^{t+1}[a] \mid a \in A \}$ be such that $c(u) = i$. Let $j$ be the least colour available amongst $\{2,3\} \setminus \{i\}$ ({\it i.e.}, $j = 2$ if $i \neq 2$ and $j = 3$ otherwise). We prove as a subclaim that, for every $a \in A$, the balls $N_G^t[a]$ and $N_G[u]$ intersect in colour $j$. For that, we need to consider three cases:
\begin{itemize}
\item\underline{Case $u = a$.} This can only happen if $i = 1$. Then, $N_G[u] \subseteq N_G^t[u] = N_G^t[a]$. Since $u$ has at least one neighbour coloured $j$, the balls $N_G^t[a]$ and $N_G[u]$ trivially intersect in colour $j$. 
\item\underline{Case $ua \in E$.} By Theorem~\ref{thm:absolute-retract}, every maximal clique of $G$ has cardinality $3$. It implies that $u,a$ have a common neighbour coloured $j$. 
\item\underline{Case $d_G(u,a) \geq 2$.} By Theorem~\ref{thm:absolute-retract}, there exists a neighbour $x$ of $u$ on a shortest $au$-path such that $c(x) = j$. 
\end{itemize} 
Furthermore, since $B_t[A,j] \neq \emptyset$, the balls $N_G^t[a], \ a \in A$, also pairwise intersect in colour $j$. By Theorem~\ref{thm:absolute-retract}, there exists a $x \in N_G(u) \cap B_t[A,j]$. In order to conclude the proof, we just need to observe that $u \in N_G(B_t[A,j]) \cap V_i = W(A,i)$.
\end{proofclaim}
When a new set $A'$ is created, as the union of sets in some nonempty family ${\cal Q} \subseteq {\cal P}_t$, we define for each colour $i$, $B_{t+1}[A',i] = \bigcap \{ W(A,i) \mid A \in {\cal Q} \}$. By the above Claim~\ref{claim-3}, $B_{t+1}[A',i] = V_i \cap \left( \bigcap \{ N_G^{t+1}[a'] \mid a' \in A' \mid \} \right)$, as desired. It now remains to prove that all sets $B_{t+1}[A',i], \ A' \in {\cal P}_{t+1}$ are nonempty and pairwise disjoint. Observe that in order to create a new group, we repeatedly pick a vertex $v \in V_1$ with a maximum number of unselected groups $A \in {\cal P}_t$ such that $v \in W(A,1)$. To ensure that all groups $A \in {\cal P}_t$ are eventually selected, we prove that:
\begin{myclaim}\label{claim-2}
For each $t \geq 1$ and $A \in {\cal P}_t$, we have $W(A,1) \neq \emptyset$.
\end{myclaim}
\begin{proofclaim}
Let $x \in B_t[A,2]$. Since $G$ is an absolute retract, $N_G(x) \cap V_1 \neq \emptyset$. We are now done as we have by construction $N_G(x) \cap V_1 \subseteq W(A,1)$. 
\end{proofclaim}
By maximality of the vertex of $V_1$ selected at each step, the sets $B_{t+1}[A',1], \ A' \in {\cal P}_{t+1}$ are nonempty and pairwise disjoint. Finally, the next claim shows that both properties also hold for the sets $B_{t+1}[A',i]$, for any colour $i$:
\begin{myclaim}\label{claim-4}
For each colour $i \neq 1$, $A \subseteq V_1$ and $t \geq 2$, the balls $N_G^t[a], \ a \in A$ intersect in colour $1$ if and only if they also intersect in colour $i$.
\end{myclaim}
\begin{proofclaim}
First assume that the balls $N_G^t[a], \ a \in A$ intersect in colour $1$. Let $a,a' \in A$ and let $v \in N_G^t[a] \cap N_G^t[a']$ such that $c(v) = 1$. W.l.o.g., $a \neq v$. For $j \in \{2,3\} \setminus \{i\}$, by Theorem~\ref{thm:absolute-retract} there exists a neighbour $x$ of $v$ on a shortest $av$-path such that $c(x) = j$. We divide our analysis into three cases:
\begin{itemize}
\item\underline{Case $xa' \notin E$.} By Theorem~\ref{thm:absolute-retract}, there exists a neighbour $y$ of $x$ on a shortest $xa'$-path such that $c(y) = i$. Note that $y \in N_G^t[a] \cap N_G^t[a']$.
\item\underline{Case $xa,xa' \in E$.} By Claim~\ref{claim-1}, the neighbour-sets $N_G(a), \ N_G(a')$ intersect in colour $i$. Since $t \geq 2$, so do the balls $N_G^t[a],\ N_G^t[a']$.
\item\underline{Case $xa \notin E, \ xa' \in E$.} By Theorem~\ref{thm:absolute-retract}, there exists a neighbour $y$ of $x$ on a shortest $ax$-path such that $c(y) = i$. Note that $y \in N_G^{t-2}[a] \cap N_G^2[a'] \subseteq N_G^t[a] \cap N_G^t[a']$.
\end{itemize}
Overall, the balls $N_G^t[a], \ a \in A$ pairwise intersect in colour $i$, and therefore by Theorem~\ref{thm:absolute-retract} there is a vertex coloured $i$ in their common intersection.

Conversely, assume that the balls $N_G^t[a], \ a \in A$ intersect in colour $i$. Let $a,a' \in A$ and let $u \in N_G^t[a] \cap N_G^t[a']$ such that $c(u) = i$. Here also, we divide our analysis into several cases:
\begin{itemize}
\item\underline{Case $ua,ua' \notin E$.} Let $j \in \{2,3\} \setminus \{i\}$. By Theorem~\ref{thm:absolute-retract} (applied twice) there exist $x,y \in N_G(u)$ that are on a shortest $au$-path and a shortest $a'u$-path respectively, such that $c(x) = c(y) = j$. If $x = y$ then, any neighbour $z \in N_G(x)$ coloured $1$ is in $N_G^t[a] \cap N_G^t[a']$. Otherwise, $d_G(x,y) = 2$. By Theorem~\ref{thm:absolute-retract}, there is a shortest $xy$-path whose internal node $z$ has colour $1$. Here also, $z \in N_G^t[a] \cap N_G^t[a']$.
\item\underline{Case $ua,ua' \in E$.} In this situation, $d_G(a,a') = 2$, and the balls $N_G^2[a] \subseteq N_G^t[a]$ and $N_G^2[a'] \subseteq N_G^t[a']$ intersect in colour $1$.
\item\underline{Case $ua \in E, \ ua' \notin E$} (The case $ua \notin E, \ ua' \in E$ is symmetrical to this one). If $d_G(a,a') \leq t$ then, the balls $N_G^t[a]$ and $N_G^t[a']$ intersect in colour $1$. Otherwise, $d_G(a',u) = t$ and $d_G(a,a') = t+1$. If furthermore $d_G(u,a') = t \geq 3$ then, by Theorem~\ref{thm:absolute-retract}, there exists a neighbour $x$ of $u$ on a shortest $a'u$-path such that $c(x) = 1$. In this situation, $x \in N_G^2[a] \cap N_G^{t-1}[a'] \subseteq N_G^t[a] \cap N_G^t[a']$. Thus, we are left considering the special subcase $t = 2$. Let $j \in \{2,3\} \setminus \{i\}$, and let $v \in N_G(a') \cap N_G(u)$. Observe that we have $c(v) = j$. Furthermore, by Theorem~\ref{thm:absolute-retract}, vertices $a$ and $u$ must have a common neighbour $y$ coloured $j$ ({\it i.e.}, the third vertex in a maximal clique containing $a$ and $u$). By construction, $d_G(y,v) = 2$. Then, again by Theorem~\ref{thm:absolute-retract}, there exists a common neighbour $z \in N_G(y) \cap N_G(v)$ such that $c(y) = 1$. In this situation, $y \in N_G^2[a] \cap N_G^2[a'] \subseteq N_G^t[a] \cap N_G^t[a']$.  
\end{itemize} 
Overall, the balls $N_G^t[a], \ a \in A$ pairwise intersect in colour $1$, and therefore by Theorem~\ref{thm:absolute-retract} there is a vertex coloured $1$ in their common intersection.
\end{proofclaim}
\smallskip
{\bf Complexity.} We revisit the approach taken in Lemma~\ref{lem:cst-ecc}. For the base case $t = 1$ we compute in ${\cal O}(m)$ time, for each vertex of $V_2$, its number of neighbours in $V_1$. We create an array of $|V_1|$ lists, numbered from $1$ to $|V_1|$. Each vertex $u \in V_2$ is put in the list numbered $|N_G(u) \cap V_1|$. It takes ${\cal O}(n)$ time. Then, we scan the lists in decreasing order ({\it i.e.}, from $|V_1|$ downto $1$), going to the next list each time the current one is empty. When the current list is nonempty, we pick any vertex $u$ of this list in order to create the next subset $A \in {\cal P}_1$. Note that $A$ can be constructed in ${\cal O}(|N_G(u)|)$ time. Since each vertex $u$ gets used for the creation of at most one group (otherwise, there would exist $A,A' \in {\cal P}_1$ such that $B_1[A,2] \cap B_1[A',2] \neq \emptyset $), the total running time in order to create all the groups of ${\cal P}_1$ is in ${\cal O}(n+m)$. Furthermore, after a group $A$ is created, we need to actualize the number of unselected vertices in $V_1$ for each $x \in V_2$ (discarding all such vertices whose all neighbours in $V_1$ are already selected). For that, we first scan $N_G(a), a \in A$ in order to update, for each $x \in V_2$, its number of unselected vertices in $V_1$. Then, if each vertex of $V_2$ keeps a pointer to its position in the unique list in which it is contained, we can update the list contents in ${\cal O}(\sum_{a \in A}|N_G(a)|)$ time (maximum number of vertices of $V_2$ that need to be moved). Overall, this phase also takes total ${\cal O}(n+m)$ time. Finally, since ${\cal P}_1$ is a partition of $V_1$, for each colour $i$ we can create the sets $B_1[A,i], i \in A$ in ${\cal O}(m)$ time, simply by scanning the neighbour-sets of each vertex of $V_1$.  
In order to complete our complexity analysis, it remains to prove that each inductive step (from $t$ to $t+1$) also requires ${\cal O}(m)$ time. 
For that, since by the hypothesis the sets $B_t[A,i], \ A \in {\cal P}_t$ are pairwise disjoint, we can compute the sets $W(A,i)$ in total linear time. Then, we proceed as for the first case (see also Lemma~\ref{lem:cst-ecc}), but we now consider the vertices of $V_1$ rather than $V_2$, and for these vertices, we keep track of the number of sets $W(A,i)$ they belong to rather than keeping track of their degrees. We get a running time proportional to $\sum_{A \in {\cal P}_t} W(A,i) = {\cal O}(n+m)$. 
\end{proof}

\paragraph{Large-diameter case.} Finally, we address the case when $d_i$ is large. For that, we start with a simple intermediate lemma.
Recall that, for every two vertices $u$ and $v$ and any $\ell \leq d(u,v)$, we can define the slice $L(u,\ell,v) := \{ x \in I(u,v) \mid d(u,x) = \ell \}$.

\begin{lemma}\label{lem:coloured-neighbour}
Let $G=(V,E)$ be an absolute retract of $k$-chromatic graphs for some $k \geq 3$, and let $c$ be a corresponding proper $k$-coloring.
For each colour $i$ and $u,v \in V_i$ such that $u \neq v$ and $d_G(u,v) \neq 3$, there exists a $x \in L(u,2,v)$ coloured $i$.
\end{lemma}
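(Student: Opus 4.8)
We want to show that for two distinct same-coloured vertices $u,v \in V_i$ with $d_G(u,v) \neq 3$, the slice $L(u,2,v)$ contains a vertex of colour $i$. The natural split is by the value of $d_G(u,v)$. The easy extreme is $d_G(u,v) = 2$: then $L(u,2,v) = \{v\}$ itself (the only vertex metrically between $u$ and $v$ at distance two from $u$ is $v$), and since $c(v) = i$ we are done immediately. So the content is in the case $d_G(u,v) \geq 4$, where $L(u,2,v)$ consists of genuine interior vertices of a shortest $uv$-path.

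\textbf{Main argument for $d_G(u,v) \geq 4$.} The plan is to walk out from $u$ along shortest paths toward $v$ and use property (3) of Theorem~\ref{thm:absolute-retract} to force a same-coloured vertex into position at distance two. First I would apply property (3) to the pair $(v,u)$ with the target colour chosen as some $j \neq i$: since $c(u) = i \neq j$ and $d_G(u,v) \geq 4 \geq 3$, there is a neighbour $x_1$ of $u$ on a shortest $uv$-path with $c(x_1) = j$; thus $x_1 \in L(u,1,v)$. Now I would apply property (3) a second time, from $x_1$ toward $v$, this time targeting the colour $i$: here the relevant non-adjacent pair is $(v, x_1)$, and since $c(x_1) = j \neq i$, property~(3) yields a neighbour $x_2$ of $x_1$ on a shortest $x_1 v$-path with $c(x_2) = i$. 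Because $x_1$ lies on a shortest $uv$-path and $x_2$ lies on a shortest $x_1v$-path obtained by stepping one edge further from $u$, we have $d_G(u,x_2) = 2$ and $x_2 \in I_G(u,v)$, so $x_2 \in L(u,2,v)$ with $c(x_2) = i$, as required.

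\textbf{The delicate point.} The step I expect to need the most care is verifying that the vertex $x_2$ produced by the second invocation of property~(3) genuinely lands at distance exactly two from $u$ and on a shortest $uv$-path, i.e.\ that concatenating the edge $u x_1$ with the shortest $x_1 v$-path through $x_2$ keeps $u, x_1, x_2$ on a common geodesic to $v$. This follows because $x_1 \in I_G(u,v)$ gives $d_G(u,v) = 1 + d_G(x_1,v)$, and $x_2 \in I_G(x_1,v)$ with $d_G(x_1,x_2)=1$ gives $d_G(x_1,v) = 1 + d_G(x_2,v)$; combining, $d_G(u,v) = 2 + d_G(x_2,v)$, so $d_G(u,x_2) \le 2$ forces $d_G(u,x_2) = 2$ and $x_2 \in I_G(u,v)$. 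I would also double-check the applicability conditions of property~(3) at each call: both invocations use a target colour distinct from $c$ of the "$v$"-role vertex, so the hypothesis ``$c(v) \neq i$'' of the theorem is met directly, and we never need the auxiliary distance condition $d_G(u,v) \geq 3$ in the second call since the colour condition already suffices.

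\textbf{Why $d_G(u,v) = 3$ is excluded.} It is worth noting for context that the excluded case $d_G(u,v) = 3$ is exactly where this construction can fail: there $L(u,2,v) = L(v,1,u)$ consists of neighbours of $v$, which have colour $\neq i$ by definition and need not admit a colour-$i$ witness two steps out from $u$. This is precisely why the statement carves out that value, and our proof makes no claim there.
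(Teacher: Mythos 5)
Your proof is correct and follows essentially the same route as the paper's: handle $d_G(u,v)=2$ trivially, then for $d_G(u,v)\geq 4$ apply property (3) of Theorem~\ref{thm:absolute-retract} twice, first to place a colour-$j$ neighbour $x_1$ of $u$ on a geodesic to $v$, then to step from $x_1$ to a colour-$i$ vertex in $L(u,2,v)$. One small correction to your ``delicate point'': in the second invocation the ``$u$''-role vertex is $v$ itself, which \emph{is} coloured $i$, so the disjunctive hypothesis of property (3) is satisfied only through the distance clause $d_G(x_1,v)=d_G(u,v)-1\geq 3$ --- contrary to your remark, that distance condition is genuinely needed there (the paper explicitly writes ``Since $d_G(y,v)\geq 3$, again by Theorem\ldots''), and its failure when $d_G(u,v)=3$ is precisely why that case is excluded from the statement.
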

\begin{proof}
If $d_G(u,v) = 2$ then, $x = v$. Otherwise, $d_G(u,v) \geq 4$. By Theorem~\ref{thm:absolute-retract}, for any $j \neq i$ there exists a neighbour $y$ of $u$ coloured $j$ on a shortest $uv$-path. Since $d_G(y,v) \geq 3$, again by Theorem~\ref{thm:absolute-retract}, there exists a neighbour $x$ of $y$ coloured $i$ on a shortest $yv$-path, and so, on a shortest $uv$-path.
\end{proof}

A function is called unimodal if every local minimum is also a global minimum. It is known that the eccentricity function of a Helly graph is unimodal~\cite{Dra89}, and this property got used in~\cite{DDG19+} in order to compute all the eccentricities in this graph class in subquadratic time. Next, we prove that a similar, but weaker property holds for each colour class of absolute retracts, namely:

\begin{lemma}\label{lem:almost-unimodal}
Let $G=(V,E)$ be an absolute retract of $k$-chromatic graphs for some $k \geq 3$, and let $c$ be a corresponding proper $k$-coloring.
For each colour $i$ and any $u \in V_i$ s.t. $e_i(u) \geq (d_i+5)/2 \geq 7$, there exists a $u' \in V_i$ s.t. $d_G(u,u') = 2$ and $e_i(u') = e_i(u)-2$.
\end{lemma}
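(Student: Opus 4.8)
The plan is to mimic the classical proof that the eccentricity function of a Helly graph is unimodal, but working entirely inside the colour class $V_i$ and exploiting the ``Helly property in colour $i$'' guaranteed by the first item of Theorem~\ref{thm:absolute-retract}. Write $e := e_i(u)$; the hypothesis $e \geq (d_i+5)/2$ is equivalent to $d_i \leq 2e-5$, which is the slack I will use throughout. I will produce the desired vertex $u'$ as a point lying in the common intersection, within $V_i$, of the following family of balls: the small ball $N_G^2[u]$, together with one ball $N_G^{e-2}[w]$ for every $w \in V_i$. Once such a $u' \in V_i$ is found, it cannot equal $u$ (otherwise $e = e_i(u) \leq e-2$), so being a same-coloured vertex in $N_G^2[u]$ it satisfies $d_G(u,u') = 2$; moreover $e_i(u') \leq e-2$ by the choice of the balls, while the triangle inequality $d_G(u',w) \geq d_G(u,w)-2$ forces $e_i(u') \geq e_i(u)-2 = e-2$, giving the claimed equality.

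By the first (Helly-type) property of Theorem~\ref{thm:absolute-retract}, to obtain such $u'$ it suffices to verify that the balls above pairwise intersect in colour $i$. There are two kinds of pairs. For a pair $N_G^2[u], N_G^{e-2}[w]$: if $d_G(u,w) \leq e-2$ then $u$ itself is a common colour-$i$ point; otherwise $d_G(u,w) \in \{e-1,e\}$ is at least $6 > 3$, and a single application of Lemma~\ref{lem:coloured-neighbour} to the pair $(u,w)$ yields $x \in L(u,2,w) \cap V_i$ with $d_G(u,x)=2$ and $d_G(x,w)=d_G(u,w)-2 \leq e-2$. So these pairs are easy.

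The real work, and the step I expect to be the main obstacle, is the second kind of pair, $N_G^{e-2}[w] \cap N_G^{e-2}[w']$ for $w,w' \in V_i$: I must exhibit a colour-$i$ vertex within distance $e-2$ of both. Set $D := d_G(w,w') \leq d_i \leq 2e-5 < 2(e-2)$, so a ``midpoint'' is in principle close enough to both ends; the difficulty is to make it colour $i$. The idea is to hop along a shortest $ww'$-path in steps of length two using Lemma~\ref{lem:coloured-neighbour}: starting from $w$, this produces colour-$i$ vertices $x_t \in I_G(w,w')$ with $d_G(w,x_t)=2t$ and $d_G(x_t,w')=D-2t$, and the hopping continues as long as the remaining distance $D-2t$ is at least $2$ and not equal to $3$. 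When $D$ is even the remaining distance stays even, so every even value $2t \in [0,D]$ is reachable, and choosing the even integer nearest $D/2$ places $x_t$ within $D/2+1 \leq e-2$ of both endpoints. When $D$ is odd the hopping gets stuck at remaining distance $3$, so only even values $2t \in [0,D-3]$ are reachable; here I would run a short parity argument. I want an even $2t$ with $D-(e-2) \leq 2t \leq e-2$, and since $D$ is odd the integer endpoints $D-e+2$ and $e-2$ differ by the odd number $D-2e+4$, hence have opposite parities, so the closed interval between them contains an even integer; the bound $D \leq 2e-5$ (together with the evenness of $D-3$ in the residual subcase $D\le e+1$) shows that this even integer can be taken $\leq D-3$, hence reachable by the hopping. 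This parity discussion is the only delicate point; the rest is bookkeeping.

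Finally, I assemble the verified pairwise intersections and apply the Helly property in colour $i$ (first item of Theorem~\ref{thm:absolute-retract}) to the whole family $\{N_G^2[u]\} \cup \{N_G^{e-2}[w] : w \in V_i\}$, which yields a vertex $u' \in V_i$ lying in every member. Reading off its properties as in the first paragraph — $d_G(u,u')=2$ because $u'\neq u$ is a same-coloured vertex of $N_G^2[u]$, and $e_i(u')=e-2$ by combining the ball membership with the triangle inequality — completes the argument.
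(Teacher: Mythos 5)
Your proof is correct and follows essentially the same route as the paper's: apply the colour-$i$ Helly property (first item of Theorem~\ref{thm:absolute-retract}) to the family $N_G^2[u]$ together with the balls of radius $e_i(u)-2$ centred at colour-$i$ vertices, certifying the pairwise intersections by iterating Lemma~\ref{lem:coloured-neighbour} in steps of two along shortest paths. The only cosmetic difference is that the paper restricts the family to $X_i=\{x\in V_i : d_G(u,x)\ge 4\}$ and handles the nearby vertices by a separate $d_G(u',x)\le 5\le e_i(u)-2$ estimate at the end, whereas you include all of $V_i$ directly; your parity bookkeeping matches the paper's choice of an even $2t\in\{\ell,\ell+1\}$.
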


\begin{proof}
Let $X_i = \{ x \in V_i \mid d_G(u,x) \geq 4 \}$. By Lemma~\ref{lem:coloured-neighbour}, for every $x \in X_i$, the balls $N_G^2[u]$ and $N_G^{e_i(u)-2}[x]$ intersect in colour $i$. Furthermore, let $x,x' \in X_i$ be arbitrary. If $d_G(x,x') \leq e_i(u) - 2$ then the balls $N_G^{e_i(u)-2}[x]$ and $N_G^{e_i(u)-2}[x']$ intersect in colour $i$. Otherwise, let $\ell := d_G(x,x') - (e_i(u)-2)$, and let $2t \in \{\ell,\ell+1\}$ be even. We prove by finite induction the existence of vertices $x_0,x_1,\ldots,x_t \in I_G(x,x')$ s.t., for every $0 \leq p \leq t$, we have $d_G(x,x_p) = 2p$ and $c(x_p) = i$. For the base case $p=0$, we have $x_0 = x$. For $0 < p \leq t$, we observe that: 
\begin{align*}
d_G(x_{p-1},x') &= d_G(x,x') - 2(p-1) = d_G(x,x') - 2p + 2 \\
&\geq d_G(x,x') - 2t + 2  \geq d_G(x,x') - (\ell+1) + 2 \\
&= d_G(x,x') - \ell + 1 = e_i(u)-1 > 3.
\end{align*}
Then, the existence of $x_p$ follows from Lemma~\ref{lem:coloured-neighbour} (applied to the pair $x_{p-1},x'$).
Furthermore, 
\begin{align*}
2t &\leq \ell + 1 = d_G(x,x') - e_i(u) + 3 \leq d_i - e_i(u) + 3 \\
&\leq d_i - (d_i+5)/2 + 3 = (d_i+1)/2 = (d_i+5)/2 -2 \\
&\leq e_i(u) -2.
\end{align*}
Thus, in this situation, $x_{2t} \in N_G^{e_i(u)-2}[x] \cap N_G^{e_i(u)-2}[x']$.  
Overall, we obtain that the balls $N_G^2[u]$ and $N_G^{e_i(u)-2}[x], \ x \in X_i$ pairwise intersect in colour $i$. By Theorem~\ref{thm:absolute-retract}, there exists a $u' \in N_G^2[u] \cap \left( \bigcap \{ N_G^{e_i(u)-2}[x] \mid x \in X_i \} \right)$ such that $c(u') = i$. Note that $u' \neq u$ because there must be a $x \in X_i$ s.t. $d_G(u,x) = e_i(u) \geq 4$. We obtain that $d_G(u,u') = 2$ and that, for every $x \in V_i \setminus X_i$ we have $d_G(u',x) \leq 2 + d_G(u,x) \leq 5$. As a result, $e_i(u') \leq \max\{5,e_i(u)-2\} = e_i(u) -2$. Since $d_G(u,u') = 2$, this must be an equality.  
\end{proof}

We end up applying this almost-unimodality property to the computation of the $d_i$'s (assuming these values to be at least in $\Omega(\sqrt{n})$):

\begin{lemma}\label{lem:large-diam-2}
Let $G=(V,E)$ be an absolute retract of $k$-chromatic graphs for some $k \geq 3$, let $c$ be a corresponding proper $k$-coloring, and let $i$ be such that $d_i \geq 8D+5 = \omega(\log{n})$. Then, with high probability, we can compute in total $\tilde{\cal O}(mn/D)$ time the value $d_i$ and the $i$-peripheral vertices.
\end{lemma}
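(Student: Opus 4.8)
The plan is to mirror the sampling scheme of Algorithm~\ref{alg:large-diam} for Helly graphs, but to replace the unimodality of the eccentricity function by the weaker ``step-two'' almost-unimodality of the partial eccentricity $e_i$ that we established in Lemma~\ref{lem:almost-unimodal}. First I would draw a random set $U$ by keeping every vertex independently with probability $p = \Theta(\log n / D)$, and for every sampled $u \in U \cap V_i$ I would compute $e_i(u)$ exactly by a single breadth-first search from $u$ in $G$, recording the largest distance to a vertex of colour $i$. Then, for every $v \in V_i$, I would set
\[
\bar e_i(v) := \min\{\, d_G(u,v) + e_i(u) \mid u \in U \cap V_i,\ d_G(u,v) \leq D \,\},
\]
(and $\bar e_i(v) := 0$ when this set is empty), which can be evaluated by running one depth-$D$ truncated breadth-first search from each sampled colour-$i$ vertex and relaxing the quantity $d_G(u,v) + e_i(u)$ at every reached $v \in V_i$. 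Finally I would return $\max_{v \in V_i} \bar e_i(v)$ together with the vertices attaining this maximum.

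For correctness I would first record the one-sided bound $\bar e_i(v) \geq e_i(v)$ whenever the defining set is nonempty, which follows from the triangle inequality applied inside the colour class: for any $u,v \in V_i$ one has $e_i(v) \leq d_G(u,v) + e_i(u)$. The key structural input is Lemma~\ref{lem:almost-unimodal}: starting from any $u \in V_i$ with $e_i(u) \geq (d_i+5)/2$, one may repeatedly pass to a vertex at $G$-distance exactly two whose $e_i$-value drops by exactly two, producing a descending sequence $u = u_0, u_1, \dots$ with $d_G(u,u_t) = 2t$ and $d_G(u,u_t) + e_i(u_t) = e_i(u)$ for each admissible $t$. For an $i$-peripheral vertex $v$ (so $e_i(v) = d_i \geq 8D+5$) this sequence stays above the threshold $(d_i+5)/2$ for at least $D/2$ steps, hence it contains $\Theta(D)$ vertices within $G$-distance $D$ of $v$; since each is kept in $U$ with probability $p = \Theta(\log n/D)$, with high probability at least one of them is sampled, and then $\bar e_i(v) = e_i(v) = d_i$. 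A union bound over the at most $n$ vertices of $V_i$ makes this, and the analogous exact estimates below, hold simultaneously.

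The delicate point, which I expect to be the main obstacle, is to rule out \emph{over}-estimation: a non-peripheral vertex could in principle receive an inflated value $\bar e_i(v) > d_i$ from a badly placed sample, and thereby corrupt both the returned value and the set of maximizers. Here the hypothesis $d_i \geq 8D+5$ is exactly what saves us, through a two-regime argument. If $v$ is ``very high'', i.e. $e_i(v) \geq (d_i+5)/2 + D$, then as above a descending vertex within distance $D$ is sampled with high probability, forcing $\bar e_i(v) = e_i(v) \le d_i$, with equality only at $i$-peripheral vertices. Otherwise $e_i(v) < (d_i+5)/2 + D$, and the crude inflation bound $\bar e_i(v) \leq e_i(v) + 2\,d_G(u^*,v) \leq e_i(v) + 2D$ (where $u^*$ realizes the minimum, using again $e_i(u^*) \leq d_G(u^*,v) + e_i(v)$) gives $\bar e_i(v) < (d_i+5)/2 + 3D$, which is strictly below $d_i$ precisely because $d_i \geq 8D+5 > 6D+5$. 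Combining the two regimes, with high probability every non-peripheral vertex satisfies $\bar e_i(v) < d_i$ while every $i$-peripheral vertex satisfies $\bar e_i(v) = d_i$, which is exactly what we need.

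It remains to account for the running time. With high probability $|U \cap V_i| = {\cal O}(pn) = \tilde{\cal O}(n/D)$, so computing all the exact values $e_i(u)$ costs $\tilde{\cal O}(mn/D)$, and the same bound controls the $\tilde{\cal O}(n/D)$ truncated breadth-first searches used to evaluate $\bar e_i$; the final scan is linear. This matches the $\tilde{\cal O}(mn/k)$ budget of Lemma~\ref{lem:large-ecc} in the Helly case, with $k$ replaced by $D$, and it is consistent with handling the complementary small-diameter range $d_i = {\cal O}(\sqrt n)$ through Lemma~\ref{lem:small-diam} instead.
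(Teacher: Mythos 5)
Your proposal is correct and follows essentially the same route as the paper's proof: the identical sampled estimator $\bar e_i(v)=\min\{d_G(u,v)+e_i(u)\}$, the descending sequence from Lemma~\ref{lem:almost-unimodal} to guarantee a hit near every high-$e_i$ vertex, and a two-regime case split (your threshold $(d_i+5)/2+D$ with the inflation bound $\bar e_i(v)\leq e_i(v)+2D$ versus the paper's $(d_i+5)/2+2D$ with $e_i(u)\leq e_i(v)+D$) that exploits $d_i\geq 8D+5$ in the same way. Your observation that only the first $\Theta(D)$ (rather than all $D+1$) vertices of the descending chain lie within distance $D$ of $v$ is in fact slightly more careful than the paper's own accounting, and does not affect the high-probability conclusion.
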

\begin{proof}
We modify Algorithm~\ref{alg:large-diam} as follows. 

\smallskip
\noindent
{\bf The algorithm.} We set $p = \alpha \frac{\log{n}}{D}$, for some sufficiently large constant $\alpha$. Then, let $U_i(p)$ contain every $v \in V_i$ independently with probability $p$. For all $v \in V_i$, let $\bar{e}_i(v) := \min \{ d_G(u,v) + e_i(u) \mid u \in U_i(p), d_G(u,v) \leq D \}$ (with the understanding that, if no vertex of $U_i(p)$ is at a distance $\leq D$ from $v$, then $\bar{e}_i(v) = 0$). We output $d_i = \max\{ \bar{e}_i(v) \mid v \in V_i \}$, and we identify as $i$-peripheral all vertices $v \in V_i$ such that $\bar{e}_i(v)$ is maximized. 

\smallskip
\noindent
{\bf Complexity.} Since we have to perform a breadth-first search from every vertex of $U_i(p)$, the running time of the algorithm is in ${\cal O}(m|U_i(p)|)$. Let us prove that with high probability, $|U_i(p)| = \tilde{\cal O}(n/D)$. First note that we have $\mathbb{E}\left[ \ |U_i(p)| \ \right] =  \tilde{\Theta}(|V_i|/D)$. We claim to have $|V_i| = \Omega(D)$. In order to see that, consider $x,y \in V_i$ s.t. $d_G(x,y) = d_i$. Recall that $d_i = \Omega(D)$. Thus, by applying Lemma~\ref{lem:coloured-neighbour} $\Omega(D)$ times, we get the existence of a shortest $xy$-path with $\Omega(D)$ vertices coloured $i$. The latter proves, as claimed, $|V_i| = \Omega(D)$. Then, $\mathbb{E}\left[ \ |U_i(p)| \ \right] = \Omega(\log{n})$. By Chernoff bounds, we have $|U_i(p)| = \tilde{\cal O}(|V_i|/D) = \tilde{\cal O}(n/D)$ with high probability. 

\smallskip
\noindent
{\bf Correctness.} Let $v \in V_i$ be arbitrary. We divide our analysis into two cases.
\begin{itemize}
\item\underline{Case $e_i(v) < (d_i+5)/2 + 2D$.} For any $u \in U_i(p)$ s.t. $d_G(u,v) \leq D$, $e_i(u) < (d_i+5)/2 + 3D$. Therefore, $\bar{e}_i(v) < (d_i+5)/2 + 4D = d_i/2 + (8D+5)/2 \leq d_i$. 
\item\underline{Case $e_i(v) \geq (d_i+5)/2 + 2D$.} In particular, we always fall in this case if $v$ is $i$-peripheral. Then, by applying Lemma~\ref{lem:almost-unimodal} $D$ times, we get the existence of vertices $x_0 = v, x_1,x_2,\ldots,x_D$ such that, for every $j > 0$, $d_G(x_{j-1},x_j) = 2$ and $e_i(x_j) = e_i(x_{j-1}) - 2$. Observe that we have $d_G(v,x_j) \leq 2j$ and $e_i(x_j) = e_i(v) - 2j$, and therefore $d_G(v,x_j) = 2j$. It implies that, if $x_j \in U_i(p)$ for some $j$, $\bar{e}_i(v) = e_i(v)$. Let us prove this happens with high probability. Indeed:
\begin{align*}
\mathbb{P}r[ U_i(p) \cap \{x_0,x_1,\ldots,x_D \} = \emptyset] < (1-p)^{D} = (1-p)^{\frac{\alpha\log{n}}p} \leq n^{-\alpha}.
\end{align*} 
\end{itemize}
Summarizing both cases above, for all $i$-peripheral vertices $v$ we have $\bar{e}_i(v) = e_i(v) = d_i$, whereas for every $v \in V_i$ that is not $i$-peripheral, $\bar{e}_i(v) < d_i$. 
\end{proof}

%

%

\subsection{Main result}\label{sec:main-k}

We end up gathering all previous results in this section, in order to prove the following:

\begin{theorem}\label{thm:main-k}
If $G=(V,E)$ is an absolute retract of $k$-chromatic graphs, for some $k \geq 3$, then we can compute its diameter with high probability in $\tilde{\cal O}(m\sqrt{n})$ time.
\end{theorem}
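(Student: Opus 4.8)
The plan is to compute, for every colour $i$, the value $d_i$ together with the set of $i$-peripheral vertices, and then to feed this data into the combinatorial criterion of Lemma~\ref{lem:diam-kchrom}. As preprocessing I would run the colouring procedure of Proposition~\ref{prop:col} to obtain a proper $k$-colouring in linear time, and decide whether $diam(G) \leq 2$ using Corollary~\ref{cor:diam-2}; that case is trivial (diameter $1$ iff $G$ is complete, otherwise $2$), so from now on I assume $diam(G) \geq 3$.

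\emph{Reduction to $k=3$.} The key device is Lemma~\ref{lem:reduction-3}: for any colour $i$, choosing two other colours $j,j'$ (which exist since $k \geq 3$), the induced subgraph $H_i := G[V_i \cup V_j \cup V_{j'}]$ is an isometric absolute retract of $3$-chromatic graphs. Since $H_i$ is isometric and $V_i \subseteq V(H_i)$, the partial eccentricities $e_i$, the value $d_i$, and hence the set of $i$-peripheral vertices are identical whether computed in $G$ or in $H_i$. This lets me apply the $3$-chromatic machinery of Sec.~\ref{sec:di} inside each (implicitly represented) $H_i$ while reading off quantities valid for $G$; in particular, any breadth-first search in $H_i$ can be simulated in $G$ in ${\cal O}(m)$ time.

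\emph{The dichotomy.} For each colour $i$, I first compute a $2$-approximation $\tilde d_i$ of $d_i$ by one breadth-first search from an arbitrary $v \in V_i$ inside $H_i$ and taking $e_i(v)$; the triangle inequality gives $d_i/2 \leq e_i(v) \leq d_i$. If $\tilde d_i \leq \sqrt n$, then $d_i \leq 2\sqrt n$, and I run a one-sided (doubling) binary search that repeatedly invokes Lemma~\ref{lem:small-diam} to find the least $D$ with $U_i = V_i$ (which equals $d_i$); one extra call with bound $d_i-1$ then isolates the $i$-peripheral vertices, at total cost $\tilde{\cal O}(d_i m) = \tilde{\cal O}(m\sqrt n)$. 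Otherwise $\tilde d_i > \sqrt n$, so $d_i > \sqrt n$, and I set $D = \Theta(\sqrt n)$ with $8D+5 \leq \sqrt n$, so that the hypothesis $d_i \geq 8D+5 = \omega(\log n)$ of Lemma~\ref{lem:large-diam-2} holds; that lemma then returns $d_i$ and the $i$-peripheral vertices with high probability in $\tilde{\cal O}(mn/D) = \tilde{\cal O}(m\sqrt n)$ time. As $k$ is a fixed constant, summing over all colours keeps the running time $\tilde{\cal O}(m\sqrt n)$, and a union bound over the (at most $k$) randomized invocations keeps the failure probability negligible.

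\emph{Conclusion and main obstacle.} With all the $d_i$'s and all the peripheral sets in hand, I finish via Lemma~\ref{lem:diam-kchrom}: I compute $\max_i d_i$, and to decide between $diam(G) = \max_i d_i$ and $diam(G) = 1 + \max_i d_i$, I scan, for each pair $i \neq j$ achieving the maximum, the $G$-neighbourhoods of the $i$-peripheral vertices to test whether some such vertex has all its $j$-coloured neighbours $j$-peripheral; this is a linear-time check given the precomputed peripheral sets. I expect the only real subtlety to be correctness bookkeeping rather than a genuine obstacle: one must verify that the peripheral data extracted in each $H_i$ is truly the $G$-data (guaranteed by the isometry in Lemma~\ref{lem:reduction-3}), that the neighbour condition of Lemma~\ref{lem:diam-kchrom} is evaluated in $G$ rather than in $H_i$, and that the single threshold in the dichotomy simultaneously satisfies the hypotheses of Lemmas~\ref{lem:small-diam} and~\ref{lem:large-diam-2} while balancing both branches at $\sqrt n$.
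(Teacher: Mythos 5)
Your proposal follows the paper's proof almost step for step: linear-time colouring via Proposition~\ref{prop:col}, the diameter-two test via Corollary~\ref{cor:diam-2}, reduction to $3$-chromatic absolute retracts via Lemma~\ref{lem:reduction-3}, the $\sqrt{n}$-threshold dichotomy between Lemma~\ref{lem:small-diam} and Lemma~\ref{lem:large-diam-2}, and the final assembly through Lemma~\ref{lem:diam-kchrom}. The one place where you diverge is the reduction itself: you process each colour $i$ in its own subgraph $G[V_i \cup V_j \cup V_{j'}]$ for arbitrarily chosen companion colours $j,j'$, and you then need the explicit assumption that $k$ is a fixed constant to absorb the resulting factor $k$ in the total running time. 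The paper instead partitions the $k$ colour classes into \emph{disjoint} triples (padding with $V_1,V_2$ so that the number of classes is a multiple of three) and works in the subgraphs $H_i = G[V_i \cup V_{i+1} \cup V_{i+2}]$; since every vertex then lies in ${\cal O}(1)$ of these subgraphs and every edge in at most two, one gets $\sum_i n_i = \Theta(n)$ and $\sum_i m_i = \Theta(m)$, hence $\sum_i \tilde{\cal O}(m_i\sqrt{n_i}) = \tilde{\cal O}(m\sqrt{n})$ with no dependence on $k$. This matters because the theorem is stated for an arbitrary $k \geq 3$, and $k$ can be as large as $\Theta(\sqrt{m})$ (every maximal clique has cardinality exactly $k$ by Theorem~\ref{thm:absolute-retract}), so your version as written only yields $\tilde{\cal O}(km\sqrt{n})$ unless the disjoint-triple trick is added. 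Everything else --- the $2$-approximation by a single BFS, the choice of $D$ with $8D+5 \leq \sqrt{n}$, the extra call at $d_i-1$ to isolate the peripheral vertices, the union bound over the randomized invocations, and the linear-time evaluation in $G$ of the neighbour condition of Lemma~\ref{lem:diam-kchrom} --- matches the paper's argument.
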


\begin{proof}
We may assume $diam(G) > 1$ (trivial case).
By Corollary~\ref{cor:diam-2}, we can decide in linear time whether $diam(G) \leq 2$.
Thus, from now on, let us assume $diam(G) \geq 3$. 
We compute a proper $k$-coloring of $G$, that takes linear-time according to Proposition~\ref{prop:col}.
Then, by Lemma~\ref{lem:diam-kchrom}, we can compute $diam(G)$ in linear time if, for each colour $i$, we are given $d_i$ and the corresponding $i$-peripheral vertices. 
In order to compute this information, let $V_1,V_2,\ldots,V_k$ be the colour classes.
Up to repeating $V_1$ ($=V_{k+1}$) and $V_2$ ($=V_{k+2}$) at most once we may assume the number of colour to be a multiple of three, and then we partition the colour classes in disjoint triples $V_i \cup V_{i+1} \cup V_{i+2}$.
By Lemma~\ref{lem:reduction-3}, each subgraph $H_i := G[V_i \cup V_{i+1} \cup V_{i+2}]$ is isometric in $G$ and is an absolute retract of $3$-chromatic graphs.
Therefore, we may restrict ourselves to $H_i$ in order to compute the values $d_i,d_{i+1},d_{i+2}$. 
Let $n_i := |V(H_i)|$ and $m_i := |E(H_i)|$.
By construction, $\sum_i n_i = \Theta(n)$ and $\sum_i m_i = \Theta(m)$. 
Hence, if for each $i$ our computations require $\tilde{\cal O}(m_i\sqrt{n_i})$ time, the total running time is in $\tilde{\cal O}(m\sqrt{n})$. Let us prove it is the case for $H_1$ (and so, by symmetry, for every $i$). Since there are only three colour classes in $H_1$, it is sufficient to prove that we can compute $d_1$ and all the $1$-peripheral vertices in $\tilde{\cal O}(m_1\sqrt{n_1})$ time. For that, we first compute $D := e_1(v)$ for some arbitrary $v \in V_1$. It takes ${\cal O}(m_1)$ time and, by the triangular inequality, it is a $2$-approximation of $d_1$. There are now two cases. If $D \leq 16\sqrt{n_1} + 10$ then, we compute by one-sided binary search the smallest $t$ such that $\forall v \in V_1, \ e_1(v) \leq t$, that is exactly $d_1$. Note that at each step of the binary search we apply Lemma~\ref{lem:small-diam}, that results in a running time in $\tilde{\cal O}(m_1d_1) = \tilde{\cal O}(m_1D) = \tilde{\cal O}(m_1\sqrt{n_1})$. Furthermore, we can compute all the $1$-peripheral vertices within the same amount of time, simply with one more call to Lemma~\ref{lem:small-diam} (for $d_1 -1$). Otherwise, $d_1 \geq D/2 \geq 8\sqrt{n_1} + 5$, and we apply Lemma~\ref{lem:large-ecc}. With high probability, the running time is in $\tilde{\cal O}(m_1\sqrt{n_1})$.
\end{proof}

\section{Split graphs}\label{sec:split}

The graphs studied in the two previous Sec.~\ref{sec:k=2} and~\ref{sec:k-chromatic} are exactly the absolute retracts of the class of all (irreflexive) graphs. In this section, we show that by considering the absolute retracts of more restricted graph classes, we may derive negative results about faster diameter computation. Specifically, $G=(V,E)$ is a split graph if its vertex-set $V$ can be partitioned into a clique $K$ and a stable set $S$. In what follows, we always denote such a bipartition by $K + S$. We also use the standard notations $\omega(G)$ and $\alpha(G)$ for, respectively, the clique number (maximum cardinality of a clique) and the independence number (maximum cardinality of a stable set) of $G$. 

Let us recall the following hardness result about the diameter problem on split graphs:

\begin{lemma}[\cite{BCH16}]\label{lem:ov}
For any $\epsilon > 0$, there exists a $c(\epsilon)$ s.t., under SETH, we cannot compute the diameter in ${\cal O}(n^{2-\epsilon})$ time on the split graphs of order $n$ and clique-number at most $c(\epsilon) \log{n}$.
\end{lemma}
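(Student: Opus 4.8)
The plan is to prove this as a fine-grained reduction from the Orthogonal Vectors problem (OV), whose $N^{2-o(1)}$ hardness under SETH in the low-dimensional regime $d = \Theta(\log N)$ is the standard starting point for such lower bounds. Recall that in OV we are given two families $A, B \subseteq \{0,1\}^d$ of $N$ Boolean vectors each, and we must decide whether there exist $a \in A$ and $b \in B$ with $\sum_{i=1}^d a_i b_i = 0$; by the standard reduction from SETH to OV, for every $\epsilon > 0$ there is a constant $c'(\epsilon)$ such that, under SETH, OV with $d = c'(\epsilon)\log N$ admits no $O(N^{2-\epsilon})$-time algorithm. After discarding in linear time any all-zero vector (which makes the instance trivially positive whenever the opposite side is non-empty), I may assume every vector has at least one coordinate equal to $1$.

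From such an instance I would build a split graph $G = K + S$ as follows. The clique $K$ consists of one \emph{coordinate vertex} $c_i$ for each $i \in [d]$, together with two \emph{apex vertices} $x$ and $y$. The stable set $S$ consists of one vertex $v_a$ for each $a \in A$ and one vertex $w_b$ for each $b \in B$. I would join $v_a$ to $c_i$ exactly when $a_i = 1$, and $w_b$ to $c_i$ exactly when $b_i = 1$; additionally I would make $x$ adjacent to every $v_a$ and $y$ adjacent to every $w_b$. Since $K$ is a clique and $S$ is independent, $G$ is a split graph on $n = d + 2 + 2N = \Theta(N)$ vertices. The apex vertices are the crucial gadget: they guarantee that every same-side pair is close, so that only genuinely \emph{cross} orthogonal pairs can force the diameter up.

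The heart of the argument is the distance analysis, which I would organize so that $\operatorname{diam}(G) = 3$ precisely when a cross orthogonal pair exists, and $\operatorname{diam}(G) = 2$ otherwise. Every vertex of $S$ lies within distance $2$ of all of $K$ (through one of its coordinate neighbours or through its apex, using that $K$ is a clique); two $A$-vertices are at distance $2$ through $x$, and two $B$-vertices through $y$. For a cross pair $v_a, w_b$ I would check that their only possible common neighbours are coordinate vertices $c_i$ with $a_i = b_i = 1$ --- the apexes fail to connect them since $x \notin N(w_b)$ and $y \notin N(v_a)$ --- so $d_G(v_a, w_b) = 2$ iff $a$ and $b$ share a $1$-coordinate, i.e.\ iff they are \emph{not} orthogonal; otherwise a path $v_a - x - y - w_b$ (or $v_a - c_i - c_j - w_b$) gives $d_G(v_a, w_b) = 3$. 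Thus the diameter is $3$ iff OV is a yes-instance. For the clique number: any clique meets the independent set $S$ in at most one vertex, and a clique containing some $v_a$ has size at most $1 + |N(v_a) \cap K| \le 1 + (d+1) = d+2$, since $v_a$ has at most $d$ coordinate neighbours plus the single apex $x$ inside $K$; hence $\omega(G) \le d + 2 = c'(\epsilon)\log N + 2 \le c(\epsilon)\log n$ for a suitable constant $c(\epsilon)$.

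To close, I would observe that an $O(n^{2-\epsilon})$-time diameter algorithm on this instance, composed with the linear-time construction and the bound $n = \Theta(N)$, would decide OV in $O(N^{2-\epsilon})$ time, contradicting SETH. The main obstacle --- and the reason the apex vertices are introduced --- is ruling out spurious distance-$3$ pairs: without $x$ and $y$, two same-side vectors with disjoint supports would already realize diameter $3$ and destroy the correspondence with cross orthogonality. The delicate point is therefore to verify that the apexes repair exactly the same-side pairs while leaving the cross-pair distances untouched, in particular never accidentally shortcutting an orthogonal cross pair down to distance $2$; together with the all-zero-vector preprocessing, this is where the correctness of the reduction genuinely rests.
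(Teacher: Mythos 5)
Your reduction is correct and is essentially the standard argument behind this lemma: the paper itself gives no proof (it cites~\cite{BCH16}), and the cited result is obtained by exactly this kind of Orthogonal Vectors reduction, with coordinate vertices forming the clique, vector vertices forming the stable set, and apex-type vertices ensuring same-side pairs stay at distance two so that the diameter is $3$ iff an orthogonal cross pair exists. Your distance analysis and the clique-number bound $\omega(G)\le d+2 = O(\log n)$ are both sound, so the proposal matches the intended proof.
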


Our main result in this section is that this above hardness result also holds for the absolute retracts of split graphs (Theorem~\ref{thm:hardness-absolute-split-retract}). For that, we need a few preparatory lemmas, namely: 

\begin{lemma}[\cite{Gol04}]\label{lem:split-dec}
Let $G=(K + S,E)$ be a split graph. Exactly one of the following conditions holds:
\begin{enumerate}
\item $|K| = \omega(G)$ and $|S| = \alpha(G)$ (in this case the partition $K+S$ is unique);
\item $|K| = \omega(G) - 1$ and $|S| = \alpha(G)$ (in this case there exists an $y \in S$ s.t. $K+\{y\}$ is complete);
\item $|K| = \omega(G)$ and $|S| = \alpha(G)-1$ (in this case there exists an $x \in K$ s.t. $S+\{x\}$ is stable).
\end{enumerate}
\end{lemma}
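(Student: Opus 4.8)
The plan is to first pin down the possible values of $|K|$ and $|S|$, then rule out the single forbidden combination, and finally verify the three parenthetical structural assertions.

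\textbf{Step 1: the elementary bounds.} First I would record the two-sided estimates
$\omega(G)-1 \le |K| \le \omega(G)$ and $\alpha(G)-1 \le |S| \le \alpha(G)$.
The upper bounds are immediate, since $K$ is a clique and $S$ is a stable set. For the lower bound on $|K|$, take any maximum clique $K^{\ast}$ of $G$; because a clique and a stable set share at most one vertex, $|K^{\ast} \cap S| \le 1$, so at least $\omega(G)-1$ vertices of $K^{\ast}$ lie in $K$ and hence $|K| \ge \omega(G)-1$. The bound $|S| \ge \alpha(G)-1$ is symmetric. Thus the pair $(|K|,|S|)$ lies in $\{\omega(G)-1,\omega(G)\} \times \{\alpha(G)-1,\alpha(G)\}$, and since $|K|$ and $|S|$ are fixed integers the three listed conditions are pairwise exclusive; what remains is to show that the fourth combination $(|K|,|S|)=(\omega(G)-1,\alpha(G)-1)$ cannot occur, and to supply the structural information in each of the surviving cases.

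\textbf{Step 2: the structural claims and the forbidden case (the crux).} I would observe that the argument of Step~1 already yields the parentheticals of cases 2 and 3. Indeed, if $|K|=\omega(G)-1$, then a maximum clique $K^{\ast}$ has $\omega(G)=|K|+1$ vertices and at most one in $S$, forcing $K^{\ast}=K\cup\{y\}$ for some $y\in S$; this $y$ is adjacent to every vertex of $K$, i.e. $K+\{y\}$ is complete. Symmetrically, $|S|=\alpha(G)-1$ produces an $x\in K$ with $S+\{x\}$ stable. The conceptual heart of the proof is then to combine these: if \emph{both} $|K|=\omega(G)-1$ and $|S|=\alpha(G)-1$ held, we would obtain such an $x\in K$ and $y\in S$ simultaneously, and the edge $xy$ would be forced to exist (since $y$ is adjacent to all of $K$, and $x\in K$) while also being forbidden (since $x$ is nonadjacent to all of $S$, and $y\in S$) --- a contradiction. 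This rules out the fourth combination, so exactly one of the three conditions holds.

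\textbf{Step 3: uniqueness in case 1.} Here $|K|=\omega(G)$ and $|S|=\alpha(G)$, so $n=\omega(G)+\alpha(G)$. For any split partition $K'+S'$, the inequalities of Step~1 together with $|K'|+|S'|=n$ force $|K'|=\omega(G)$ and $|S'|=\alpha(G)$. Suppose $K\neq K'$. Since $|K'\cap S|\le 1$ and $|K'|=\omega(G)$, we get $|K\cap K'|\ge \omega(G)-1$, and as $K\neq K'$ with both of size $\omega(G)$ this is an equality; writing $W:=K\cap K'$ I would then let $a$ be the unique vertex of $K\setminus K'$ (so $a\in S'$) and $b$ the unique vertex of $K'\setminus K$ (so $b\in S$). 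Both $a$ and $b$ are adjacent to all of $W$, so $ab\in E$ would make $W\cup\{a,b\}$ a clique of size $\omega(G)+1$; hence $ab\notin E$. Finally, tracking the partition gives $S'\setminus\{a\}=S\setminus\{b\}$, and since $S'$ is stable, $a$ is nonadjacent to all of $S\setminus\{b\}$; together with $ab\notin E$ this shows $a$ is nonadjacent to all of $S$, so $S\cup\{a\}$ is a stable set of size $\alpha(G)+1$, a contradiction. Therefore $K=K'$, and the partition is unique.

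I expect the main obstacle to be Step~3: ruling out the forbidden case in Step~2 is a clean one-line adjacency contradiction, but the uniqueness argument requires carefully bookkeeping the four intersections $K\cap K'$, $K\cap S'$, $S\cap K'$, $S\cap S'$ and extracting first the non-edge $ab$ and then the oversized stable set. The rest is routine counting.
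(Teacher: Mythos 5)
Your proof is correct: the bounds in Step 1, the adjacency contradiction ruling out the $(\omega(G)-1,\alpha(G)-1)$ combination, and the uniqueness argument via the exchanged pair $a,b$ all check out. The paper gives no proof of this lemma — it is simply cited from~\cite{Gol04} — so there is nothing to compare against; your argument is the standard self-contained one and fills that gap correctly.
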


Recall that $G=(V,E)$ is a complete split graph if there exists a partition of its vertex-set into a clique $K$ and a stable set $S$ s.t. every vertex of $S$ is adjacent to every vertex of $K$. In particular, a complete split graph has diameter at most two. 

\begin{theorem}[\cite{Kla94}]\label{thm:absolute-split-retract}
A split graph is an absolute retract of split graphs if and only if it is a complete split graph or it has a unique partition of its vertex-set into a clique and a stable set.
\end{theorem}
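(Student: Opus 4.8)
My plan is to work directly with the definition of a retraction as a homomorphism $f : G \to H$ fixing $V(H)$ pointwise, together with the fact that split graphs are perfect: since a retract is an induced (indeed isometric) subgraph, $H$ is itself split and the isochromatic hypothesis is simply $\omega(H) = \omega(G)$. The backbone of both directions is a \emph{folding} reduction. If $v \in V(G)\setminus V(H)$ is covered by some other vertex $h$, i.e. $N_G(v)\subseteq N_G(h)$, then sending $v\mapsto h$ and fixing everything else is a retraction of $G$ onto $G-v$; moreover $G-v$ is again a split graph that contains $H$ as an isometric (by the usual distance monotonicity $d_H \le d_{G-v}\le d_G = d_H$ on $V(H)$) and isochromatic subgraph. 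Hence, to prove that $H$ \emph{is} an absolute retract it suffices to show that every proper extension $G\supsetneq H$ of this kind contains a covered vertex outside $H$, fold it away, and iterate; and to prove that $H$ is \emph{not} an absolute retract it suffices to exhibit one extension with no retraction, for which the simplest target is a single added vertex $w$, because there a retraction is exactly a fold of $w$, and $w$ folds iff $N_G(w)\cap V(H)\subseteq N_H(h)$ for some $h\in V(H)$.

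For the \textbf{if} direction I treat the two cases separately. When $H$ is complete split it is the join $K_H * \overline{S_H}$ of its clique and its stable set. A retraction $G\to H$ then amounts to a partition $V(G)=A\uplus B$ with $K_H\subseteq A$, $S_H\subseteq B$, $B$ independent, and $\chi(G[A])\le |K_H|$ (the join makes every $A$--$B$ pair admissible); I would produce $B$ as an independent set containing $S_H$, avoiding $K_H$, and meeting every maximum clique of $G$, deriving its existence from $\omega(G)=\omega(H)=|K_H|+1$ and the splitness of $G$. When $H$ has a unique partition $K_H+S_H$, I instead argue by folding, using the two uniqueness conditions read off from Lemma~\ref{lem:split-dec}: every clique vertex has a neighbour in $S_H$, and every stable vertex misses some clique vertex. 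These are precisely what I would use to locate, in any proper extension, a vertex of $G\setminus H$ that is covered in $G$, which I then fold and iterate.

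For the \textbf{only if} direction I assume $H$ is neither complete split nor uniquely partitioned, so by Lemma~\ref{lem:split-dec} it admits a non-tight partition of type $2$ or $3$. I build a one-vertex extension by attaching $w$ with neighbourhood $S^{*}:=N_G(w)\cap V(H)$ chosen so that (i) $S^{*}$ lies in no open neighbourhood $N_H(h)$, while (ii) $S^{*}\cup\{w\}$ still admits a clique/stable partition, (iii) $w$ creates no clique of size $>\omega(H)$, and (iv) $S^{*}$ contains no pair at distance $3$ in $H$, so that $G$ stays isometric over $H$ (here I use that split graphs have diameter at most $3$). The governing example is the paw, a triangle $k_1k_2y$ with a pendant $s$ at $k_1$: taking $S^{*}=\{k_1,k_2,s\}$ gives a split, isometric, $3$-chromatic extension in which $w$ cannot be folded, since $k_2$ and $s$ have no common neighbour inside $N_H(k_1)$. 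A clique vertex with no private stable neighbour (type $3$), respectively a stable vertex completing the clique (type $2$), is exactly what makes such an $S^{*}$ available, and non-complete-splitness is what guarantees the obstruction is not dissolved by an alternative partition.

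The main obstacle, I expect, is the simultaneous bookkeeping of splitness, isometry (beyond $\omega$-preservation, the distance-$3$ avoidance), and isochromaticity in the \textbf{only if} construction, and dually the guarantee in the \textbf{if} direction that a covered vertex (or the independent transversal $B$ in the complete split case) always exists \emph{outside} $H$. The type-$2$ subcase of the converse is the most delicate, because there the completing stable vertex $y$ behaves like a clique vertex; I would handle it by first passing to the partition $\bigl(K\cup\{y\}\bigr)+\bigl(S\setminus\{y\}\bigr)$ and then reusing the type-$3$ analysis to select $S^{*}$.
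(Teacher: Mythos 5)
The paper does not prove this statement at all: Theorem~\ref{thm:absolute-split-retract} is imported verbatim from~\cite{Kla94} and used as a black box in Section~\ref{sec:split}, so there is no in-paper proof to compare against. Judged on its own, your plan is sound and every step you leave as an assertion can be completed. For the complete split case, the independent transversal $B$ you want is simply a colour class: since $G$ is split hence perfect, it has a proper $\omega(G)$-colouring; permuting colours so that $K_H$ receives colours $1,\dots,|K_H|$ forces every vertex of $S_H$ (adjacent to all of $K_H$) into the remaining class, which is then independent, contains $S_H$, avoids $K_H$, and leaves an $|K_H|$-chromatic complement. For the unique-partition case, your fold always exists: take the split partition of $G$ with $K_G$ a maximum clique; any new vertex lying in $S_G$ has a non-neighbour $h\in K_G$ and then $N_G(v)\subseteq K_G\setminus\{h\}\subseteq N_G(h)$, while the new vertices cannot all lie in $K_G$, because then $(K_G\cap V(H))+(V(H)\setminus K_G)$ would be a second clique/stable partition of $H$ with a strictly smaller clique part, contradicting uniqueness. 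For the converse, your paw example does generalize cleanly: after reducing to a type-3 partition $K+S$ with witness $x\in K$ having no neighbour in $S$, take $S^{*}=(K\setminus\{x\})\cup\{s\}$ where $s\in S$ misses some vertex of $K\setminus\{x\}$ (such an $s$ exists precisely because otherwise $(K\setminus\{x\})+(S\cup\{x\})$ would exhibit $H$ as complete split); one then checks directly that $S^{*}$ has pairwise distances at most $2$, contains no $\omega(H)$-clique, yields a split one-vertex extension, and has no common neighbour, since the only candidates are the vertices of $S^{*}$ itself, the vertex $x$ (non-adjacent to $s$), and the vertices of $S\setminus\{s\}$ (non-adjacent to $s$). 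So the proposal is correct; the only caveat is that these three existence claims are stated as intentions rather than proved, and they are the entire mathematical content of the theorem.
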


We are now ready to prove the main result of this section, namely:

\begin{theorem}\label{thm:hardness-absolute-split-retract}
There is a linear-time reduction from the diameter problem on split graphs to the same problem on the absolute retracts of split graphs. 

In particular, for any $\epsilon > 0$, there exists a $c(\epsilon)$ s.t., under SETH, we cannot compute the diameter in ${\cal O}(n^{2-\epsilon})$ time on the absolute retracts of split graphs of order $n$ and clique-number at most $c(\epsilon) \log{n}$.
\end{theorem}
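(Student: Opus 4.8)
The plan is to give a linear-time, diameter-preserving transformation $G \mapsto G'$ that turns an arbitrary split graph $G=(K+S,E)$ into an absolute retract of split graphs $G'$ with $diam(G')=diam(G)$, and then to compose it with Lemma~\ref{lem:ov}. Since every connected split graph has diameter at most $3$ (any two vertices are joined through the clique) and every absolute retract of split graphs is itself a split graph, the only freedom left to us is the distinction between diameter $2$ and diameter $3$; hence the whole difficulty is to force $G'$ into the ``good'' case of Theorem~\ref{thm:absolute-split-retract} while keeping every pairwise distance intact. Combining Theorem~\ref{thm:absolute-split-retract} with Lemma~\ref{lem:split-dec}, a non-complete split graph with partition $K+S$ is an absolute retract exactly when it falls in case~1 of Lemma~\ref{lem:split-dec}, that is, when \textbf{(A)} no vertex of $S$ is adjacent to all of $K$, and \textbf{(B)} every vertex of $K$ has at least one neighbour in $S$. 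So the target of the construction is to enforce (A) and (B).

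First I would compute a split partition $K+S$ of $G$ in linear time and delete from $K$ every \emph{empty} clique vertex, that is, every $k\in K$ having no neighbour in $S$; call $G_1$ the resulting graph. Deleting such a $k$ changes no distance between the remaining vertices: $k$ lies on no shortest path (any clique-to-clique, clique-to-$S$, or $S$-to-$S$ shortest path can avoid a clique vertex with no private stable neighbour), so $diam(G_1)=diam(G)$; moreover, since any stable vertex adjacent to $k$ would witness that $k$ is \emph{not} empty, no vertex of $S$ loses all of its clique neighbours, so connectivity is preserved. By construction $G_1$ now satisfies (B). If $G_1$ happens to be a complete split graph I would simply output $G'=G_1$, which is already an absolute retract by Theorem~\ref{thm:absolute-split-retract}.

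Otherwise $G_1$ contains a \emph{non-full} stable vertex $s_0$ (one with a non-neighbour $k_0$ in the clique of $G_1$), and I would add a single new clique vertex $a$, made adjacent to the whole clique of $G_1$ and to $s_0$ only among the stable vertices; this is $G'$, with clique $K^{*}=K(G_1)\cup\{a\}$. Now (A) holds: every stable vertex other than $s_0$ has $a$ as a clique non-neighbour, and $s_0$ still has $k_0$. Condition (B) is also preserved, the only new clique vertex $a$ having the stable neighbour $s_0$; hence $G'$ is in case~1 of Lemma~\ref{lem:split-dec} and is an absolute retract of split graphs. The crucial point is that adding $a$ preserves all distances: adding a vertex cannot increase distances, and the only new length-$2$ paths run through $a$, i.e.\ between two vertices of $K(G_1)\cup\{s_0\}$, none of which is a pair of stable vertices since $a$ has a \emph{unique} stable neighbour. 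Consequently no distance-$3$ pair of $G_1$ (necessarily a pair of stable vertices) is shortcut, while $a$ itself has eccentricity $2$; therefore $diam(G')=diam(G_1)=diam(G)$. The whole construction is clearly linear-time, adds one vertex, and raises the clique number by exactly one.

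The hardness statement then follows by composition. Applying the above to the hard instances of Lemma~\ref{lem:ov} produces, in linear time, absolute retracts of split graphs $G'$ with $|V(G')|={\cal O}(n)$, clique number at most $c(\epsilon)\log n+1$, and the same diameter; an ${\cal O}(n^{2-\epsilon})$-time algorithm for the diameter on absolute retracts of split graphs would thus yield one for split graphs of logarithmic clique number, contradicting SETH (after absorbing the ``$+1$'' into a slightly larger constant, which is harmless since $\log|V(G')|=\Theta(\log n)$). The main obstacle, and the place where the two tricks above are essential, is exactly the simultaneous requirement of a \emph{unique} split partition and an \emph{unchanged} diameter: the obvious way of repairing (B), attaching a private pendant to each offending clique vertex, creates a stable vertex at distance $3$ from the rest of $S$ and thus destroys diameter-$2$ instances, whereas attaching the repair vertex to several existing stable vertices would shortcut diameter-$3$ instances; deleting the empty clique vertices outright and giving the single apex $a$ exactly one stable neighbour is what threads between these two failure modes.
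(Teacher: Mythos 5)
Your construction is correct, and it follows the same overall strategy as the paper -- reduce to deciding ``diameter $3$ or not,'' and use Lemma~\ref{lem:split-dec} together with Theorem~\ref{thm:absolute-split-retract} to identify exactly two obstructions to being an absolute retract: clique vertices with no stable neighbour, and stable vertices adjacent to the whole clique -- but you neutralize the second obstruction differently. The paper removes \emph{both} kinds of offending vertices, iterating the two pruning rules until a fixpoint (iteration is needed there because deleting a clique vertex lowers $|K|$ and can create new full stable vertices, and deleting a full stable vertex can leave a clique vertex with no stable neighbour); it then argues that every pruned vertex $v$ satisfies $N(v)=K\setminus\{v\}$ and hence has eccentricity at most two and lies on no shortest path, so the predicate $diam=3$ is preserved. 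You instead delete only the empty clique vertices (a single pass suffices, since these deletions do not change $N(k)\cap S$ for the surviving $k$) and then kill all full stable vertices at once by adding one apex $a$ joined to the whole clique and to a single non-full stable vertex $s_0$; your verification that $a$ creates no new length-two path between two stable vertices, and has eccentricity two itself, is exactly what is needed. Each route has a small advantage: the paper's is deletion-only (so $G'$ is an isometric subgraph of $G$ and the distance analysis is immediate), while yours avoids the fixpoint iteration and the interaction between the two rules. One minor point you should patch: your map is not literally diameter-preserving in the degenerate case where deleting the empty clique vertices collapses $G_1$ to a complete graph (e.g.\ $K=\{k_1,k_2\}$, $S=\{s\}$, $sk_1\in E$: here $diam(G)=2$ but $diam(G_1)=1$). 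This is harmless for the reduction -- as in the paper, one should first dispose of $diam(G)\le 1$ in linear time and phrase the reduction as deciding whether the diameter equals $3$ -- but as stated your claim $diam(G')=diam(G)$ needs that caveat.
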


\begin{proof}
Let $G=(V,E)$ be a split graph. First, we check in linear time whether $diam(G) = 0$ ({\it i.e.}, $G$ is a singleton) or $diam(G) = 1$ ({\it i.e.}, $G$ is a complete graph). From now on, let us assume $diam(G) \geq 2$. Since $G$ is a split graph, computing the diameter is equivalent to deciding whether $diam(G) = 3$. For that, we compute a partition of $V$ into a clique $K$ and a stable set $S$. It can be done in linear time~\cite{Gol04}. For every vertex $v$, let $deg(v) := |N(v)|$. We apply the following pruning rules until no more vertex can be removed:
\begin{itemize}
\item If there exists a $x \in K$ s.t. $deg(x) = |K|-1$, then we remove $x$ from $G$.  
\item If there exists a $y \in S$ s.t. $deg(y) = |K|$, then we remove $y$ from $G$.
\end{itemize}
{\bf Complexity.} Let us explain how  both rules above can be applied exhaustively in total linear time.
For that, we maintain an array of $n-1$ lists, numbered from $1$ to $n-1$, so that each vertex of $S$ of degree $i$ must be contained into the $i^{th}$ such list. We proceed similarly for the vertices of $K$ (but in a separate array of lists). Note that such a data structure can be initialized in linear time, simply by computing the degree sequence of the graph. If we further store, for each vertex, a pointer to its position in the unique list in which it is contained, then every time we remove a vertex $v$, we can update the structure in a time proportional to $deg(v)$. The latter results in a total update time in ${\cal O}(m)$. Finally, if at each step we maintain the cardinality $|K|$ of the clique $K$, then in order to check whether one of the two pruning rules applies, we are left testing whether at most two lists of our data structure are nonempty.   

\noindent
{\bf Correctness.} In order to prove correctness of these above pruning rules, let us first define a super-simplicial vertex as any vertex $v$ s.t. $N(v) = K \setminus \{v\}$. Note that if $x \in K$ is s.t. $deg(x) = |K|-1$ then, $N(x) = K \setminus \{x\}$, and so, $x$ is super-simplicial. In the same way if $y \in S$ is s.t. $deg(y) = |K|$ then, $N(y) = K$, and so, $y$ is super-simplicial. Summarizing the above, we can only prune super-simplicial vertices. Now, we claim that, for a split graph $G$ and a super-simplicial vertex $v$ of $G$, we have $diam(G) = 3$ if and only if $diam(G \setminus \{v\}) = 3$. Indeed, since $G \setminus \{v\}$ is an isometric subgraph of $G$, $diam(G\setminus\{v\}) = 3$ implies $diam(G) \geq 3$, hence (since $G$ is a split graph) $diam(G) = 3$. Conversely, if $diam(G) = 3$ then, there exist $y,y' \in S$ s.t. $d(y,y') = 3$. In order to prove that $diam(G \setminus \{v\}) = 3$, it suffices to prove that none of $y$ and $y'$ can be a super-simplicial vertex. That is indeed the case because, since $K$ is a dominating set of $G$, any super-simplicial vertex has eccentricity at most two. 

\smallskip
Finally, let $G'$ be the resulting subgraph after no more vertex can be removed. Observe that we have reduced the computation of $diam(G)$ to the computation of $diam(G')$. We shall prove that there exists a unique partition of $V(G')$ into a clique $K'$ and a stable set $S'$. By Theorem~\ref{thm:absolute-split-retract}, this will imply that $G'$ is an absolute retract of split graphs. Let $K'$ and $S'$ be a partition of $V(G')$ into a clique and a stable set, and suppose for the sake of contradiction this partition is not unique. By Lemma~\ref{lem:split-dec}, either there exists $y \in S'$ s.t. $K'+\{y\}$ is complete, or there exists $x \in K'$ s.t. $S'+\{x\}$ is a stable set. In the former case, $deg(y) = |K'|$, while in the latter case, $deg(x) = |K'|-1$. But then, we could still have applied one of our two pruning rules above, a contradiction.
\end{proof}

\section{Planar graphs}\label{sec:planar}

Our last (non-algorithmic) section is about the absolute retracts of planar graphs
The latter have been characterized in~\cite{Hel74a}, assuming the Four-color conjecture. Since this is now a theorem~\cite{ApH76}, the characterization of absolute retracts of planar graphs is complete:

\begin{theorem}[\cite{Hel74a}]\label{thm:absolute-planar-retract}
A planar graph $G$ is an absolute retract of planar graphs if and only if it is maximal planar and, in an embedding of $G$ in the plane, any triangle bounding a face of $G$ belongs to a subgraph of $G$ isomorphic to $K_4$.
\end{theorem}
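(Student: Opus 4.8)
The plan is to work throughout with the reformulation of a retract as a homomorphism: a subgraph $G$ of a planar graph $G'$ is a retract exactly when there is an edge-preserving map $r : G' \to G$ with $r|_G = \mathrm{id}$, and the only local constraint on such an $r$ is that for each $w \in V(G') \setminus V(G)$ the image $r(w)$ must be a vertex of $G$ adjacent, in $G$, to $r(x)$ for every neighbour $x$ of $w$; in particular $r(w)$ is a common neighbour in $G$ of all the neighbours of $w$ that already lie in $G$. This single observation drives both directions, while the Four-Colour Theorem~\cite{ApH76} is what keeps every auxiliary graph inside the planar, and inside the correct chromatic, regime. Note that the ball-Helly characterization of Theorem~\ref{thm:absolute-retract} concerns retracts of \emph{all} $k$-chromatic supergraphs, whereas here only planar supergraphs must be retracted; the two notions differ, so that machinery cannot be invoked directly and a dedicated argument is required.

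For necessity I would argue by contraposition, producing for each violated condition a planar supergraph $G'$ that is isometric and isochromatic but admits no retraction onto $G$. If $G$ is not maximal planar it has a face $F$ bounded by a cycle of length at least four; inside $F$ one attaches a gadget (schematically, a new vertex joined to a set of boundary vertices of $F$ having no common neighbour in $G$, which exists by non-triangularity of $F$). Placing the gadget strictly inside $F$ keeps the embedding planar and, since it cannot create shortcuts between old vertices, isometric, while a retraction would have to map the new vertex to an absent common neighbour. If instead $G$ is maximal planar but some face-triangle $T=\{a,b,c\}$ is \emph{not} contained in a $K_4$, I would insert a new vertex $v$ inside that face adjacent to $a,b,c$: the resulting $G'$ is planar, $G$ is isometric in it (a path through $v$ between two original vertices is at least as long as an edge of $T$), and any retraction would have to send $v$ to a common neighbour of $a,b,c$ in $G$, i.e.\ to a fourth vertex of a $K_4$ on $T$, which does not exist.

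A subtlety already visible here is that the $K_4$ formed by $v$ and $T$ forces $\chi(G')=4$, so this construction certifies failure of the absolute-retract property exactly when $G$ is itself $4$-chromatic. This is not a gap but the heart of the matter: the $K_4$ condition can only hold at all when $G$ is $4$-chromatic, since a vertex adjacent to a properly $3$-coloured triangle would require a fourth colour. Hence the $3$-chromatic maximal planar triangulations fail condition (b) automatically, and they must be excluded from the class of absolute retracts by a separate, \emph{colour-preserving} obstruction — this is precisely the delicate point of Hell's original analysis, and where the resolution of the Four-Colour Conjecture is essential.

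For sufficiency I would reduce to a one-vertex extension property. Using that an isometric (resp.\ isochromatic) subgraph of $G'$ remains isometric (resp.\ isochromatic) in every graph lying between $G$ and $G'$, it suffices to treat $G' = G + w$ obtained by adding one vertex $w$ with $G$ isometric and isochromatic in $G'$, and to show that fixing $G$ and sending $w$ to an appropriate vertex is a retraction. The crux lemma is then local: planarity of $G'$, the isometry of the embedding, and the hypothesis that $G$ is maximal planar with every face-triangle in a $K_4$ together force the $G$-neighbourhood of $w$ to be dominated by a single vertex $u$ of $G$, onto which $w$ may be folded, with isochromaticity and the Four-Colour Theorem ensuring that this folding is consistent with a global proper $4$-colouring. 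The main obstacle, in my view, is twofold and sits exactly at the interface of the two hypotheses: on the necessity side, the demand that the obstruction graphs be simultaneously isometric \emph{and} isochromatic, which splits the argument according to $\chi(G) \in \{3,4\}$ and leaves the genuinely hard colour-preserving obstruction for the $3$-chromatic triangulations; and on the sufficiency side, the one-vertex extension lemma itself, i.e.\ proving that maximal planarity and the $K_4$ condition really do guarantee a dominating image for every admissible new vertex, which is where maximal planarity, the $K_4$ condition and the Four-Colour Theorem must be combined and where the bulk of the work resides.
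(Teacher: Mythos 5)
This statement is imported by the paper from Hell's work (the citation \cite{Hel74a} in the theorem header), so the paper contains no proof of it to compare against; your proposal therefore has to stand on its own, and as written it is an outline with the decisive steps missing rather than a proof. On the sufficiency side, the reduction to the one-vertex extension $G' = G + w$ is not valid as stated. Producing, for each added vertex $w$ separately, a retraction of the induced subgraph $G[V(G) \cup \{w\}]$ onto $G$ does not assemble into a retraction of $G'$ onto $G$: since the graphs are irreflexive, an edge $ww'$ between two added vertices must be mapped to an edge $r(w)r(w')$ of $G$, a constraint your ``single local observation'' ignores and which couples the choices of images across different added vertices. The standard way around this is a dismantling argument that folds $G'$ onto $G' \setminus \{w\}$ one vertex at a time, but that requires a different lemma (a dominating vertex for $w$ inside the \emph{current} graph, not inside $G$) and you would still have to prove it; as it stands the ``crux lemma'' that maximal planarity plus the $K_4$ condition force a dominating image is asserted, not established, and you yourself flag it as ``where the bulk of the work resides.''

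The necessity direction has an analogous acknowledged hole. Your apex-in-a-face construction creates a $K_4$ and hence a $4$-chromatic supergraph, so it only certifies non-retractability when $G$ is already $4$-chromatic; for the $3$-chromatic maximal planar triangulations (which violate the $K_4$ condition automatically) you explicitly defer to ``a separate, colour-preserving obstruction'' without supplying one. Likewise, for non-maximal-planar $G$ the gadget ``joined to a set of boundary vertices having no common neighbour'' is not shown to keep $G$ isometric (joining boundary vertices at distance $\geq 3$ creates shortcuts) nor isochromatic. Identifying these two pressure points is good diagnosis, but they are precisely the content of the theorem, so the proposal cannot be accepted as a proof; if you want to use this statement, cite \cite{Hel74a} (together with \cite{ApH76} for the Four-Colour Theorem) as the paper does.
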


To our best knowledge, there has been no relation uncovered between the absolute retracts of planar graphs and other important planar graph subclasses. We make a first step in this direction. Specifically, the Apollonian networks are a subclass of planar graphs that can be defined recursively, as follows.
The triangle $K_3$ is an Apollonian network.
If $G$ is an Apollonian network, and $f$ a triangular face in a plane embedding of $G$, then the graph $G'$, obtained by adding a new vertex adjacent to the three ends of $f$, is also an Apollonian network.
The Apollonian networks of order at least four (all Apollonian networks but the triangle) have some important alternative characterizations, namely they are exactly the maximal planar chordal graphs of order at least four~\cite{MJP06}, the planar $3$-trees~\cite{BiV13}, and the uniquely $4$-colorable planar graphs~\cite{Fow98}.

\begin{proposition}\label{prop:apollonian}
Every Apollonian network with at least four vertices is an absolute retract of planar graphs.
\end{proposition}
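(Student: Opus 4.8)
The plan is to verify the two conditions of Theorem~\ref{thm:absolute-planar-retract} directly for any Apollonian network $G$ with at least four vertices, using the recursive definition of these networks. The first condition requires $G$ to be maximal planar; this follows immediately from the alternative characterization stated just before the proposition, namely that Apollonian networks of order at least four are exactly the maximal planar chordal graphs. Alternatively, one sees maximality by induction on the recursive construction: the base case of order four is $K_4$, which is maximal planar, and each recursive step inserts a new vertex inside a triangular face $f$ and joins it to all three corners of $f$, thereby triangulating that face while preserving planarity and maximality. So the first condition is essentially free.

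The bulk of the argument is the second condition: in a plane embedding of $G$, every triangle bounding a face must belong to a subgraph isomorphic to $K_4$. My approach is to argue by induction on the recursive construction of $G$, tracking the faces at each step. In the base case $G = K_4$, every one of the four triangular faces together with the fourth vertex forms a copy of $K_4$, so the condition holds. For the inductive step, suppose $G'$ is obtained from an Apollonian network $G$ (which I assume has order at least four, so the induction hypothesis applies) by inserting a new vertex $z$ into a triangular face $f = xyz_0$ bounded by the triangle $xyw$, say, and joining $z$ to $x, y, w$. The insertion destroys the face $f$ and creates three new triangular faces $xyz$, $ywz$, and $xwz$. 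Each of these new faces, together with the common new vertex arrangement, must be checked: the face $xyz$ lies in the subgraph induced by $\{x,y,z,w\}$, which is a $K_4$ since $z$ is adjacent to all of $x,y,w$ and $x,y,w$ were mutually adjacent (they bounded a face of $G$). The same $K_4$ on $\{x,y,z,w\}$ witnesses the condition for all three new faces. For every other face of $G'$ — that is, every face inherited unchanged from $G$ — the induction hypothesis already supplies the required $K_4$, and that $K_4$ persists in $G'$ since we only added edges and a vertex.

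The main obstacle I anticipate is bookkeeping the base case of the induction correctly: the recursion starts from the triangle $K_3$, which is \emph{not} of order at least four and need not satisfy the condition on its own (the single triangular region is not contained in any $K_4$). Hence the induction must be seeded at the first nontrivial network, $K_4$, and one must argue that every Apollonian network of order at least four arises from $K_4$ by a sequence of vertex insertions, each into a face of a network already of order at least four. This is immediate from the recursive definition once we observe that the very first insertion applied to $K_3$ produces exactly $K_4$. A secondary subtlety is that Theorem~\ref{thm:absolute-planar-retract} quantifies over triangles bounding a face in \emph{an} embedding; since maximal planar graphs on at least four vertices are $3$-connected, their plane embedding is combinatorially unique (up to reflection and choice of outer face), so it suffices to reason about the faces produced by the construction, and no ambiguity about the embedding arises. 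With these points handled, both conditions of Theorem~\ref{thm:absolute-planar-retract} hold, and the proposition follows.
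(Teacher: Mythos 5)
Your proof is correct, but it takes a different route from the paper's. The paper verifies the second condition of Theorem~\ref{thm:absolute-planar-retract} in one line by invoking the characterization of Apollonian networks (of order at least four) as planar $3$-trees: every maximal clique of a $3$-tree has exactly four vertices, so every facial triangle, being a clique, extends to a $K_4$. You instead run an induction on the recursive construction, checking that the three new faces created by each vertex insertion all live inside the fresh $K_4$ on the inserted vertex and the three corners of the subdivided face, while the untouched faces keep the $K_4$'s guaranteed by the induction hypothesis. Your argument is more elementary --- it needs only the recursive definition, not the $3$-tree characterization --- at the cost of the bookkeeping you describe (seeding the induction at $K_4$ rather than $K_3$, and tracking which faces survive each insertion), all of which you handle correctly. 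The remark about the combinatorial uniqueness of the embedding of a $3$-connected planar graph is a reasonable extra precaution that the paper leaves implicit. One cosmetic slip: you write the face as $f = xyz_0$ and then say it is bounded by the triangle $xyw$; the two labellings should be reconciled, but this does not affect the argument.
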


\begin{proof}
Let $G$ be an Apollonian network of order at least four. In particular, $G$ is maximal planar. Consider any triangular face $f$ in a plane embedding of $G$. We also have that $G$ is a $3$-tree, and so, all its maximal cliques have four vertices. In particular, there exists a vertex adjacent to all three vertices of $f$. By Theorem~\ref{thm:absolute-planar-retract}, $G$ is an absolute retract of planar graphs. 
\end{proof}

However, this above inclusion is strict ({\it e.g.}, see Fig.~\ref{fig:large-tw} for a counter-example). Indeed, our main result in this section is as follows:

\begin{figure}[!h]
\center
\includegraphics[width=.3\textwidth]{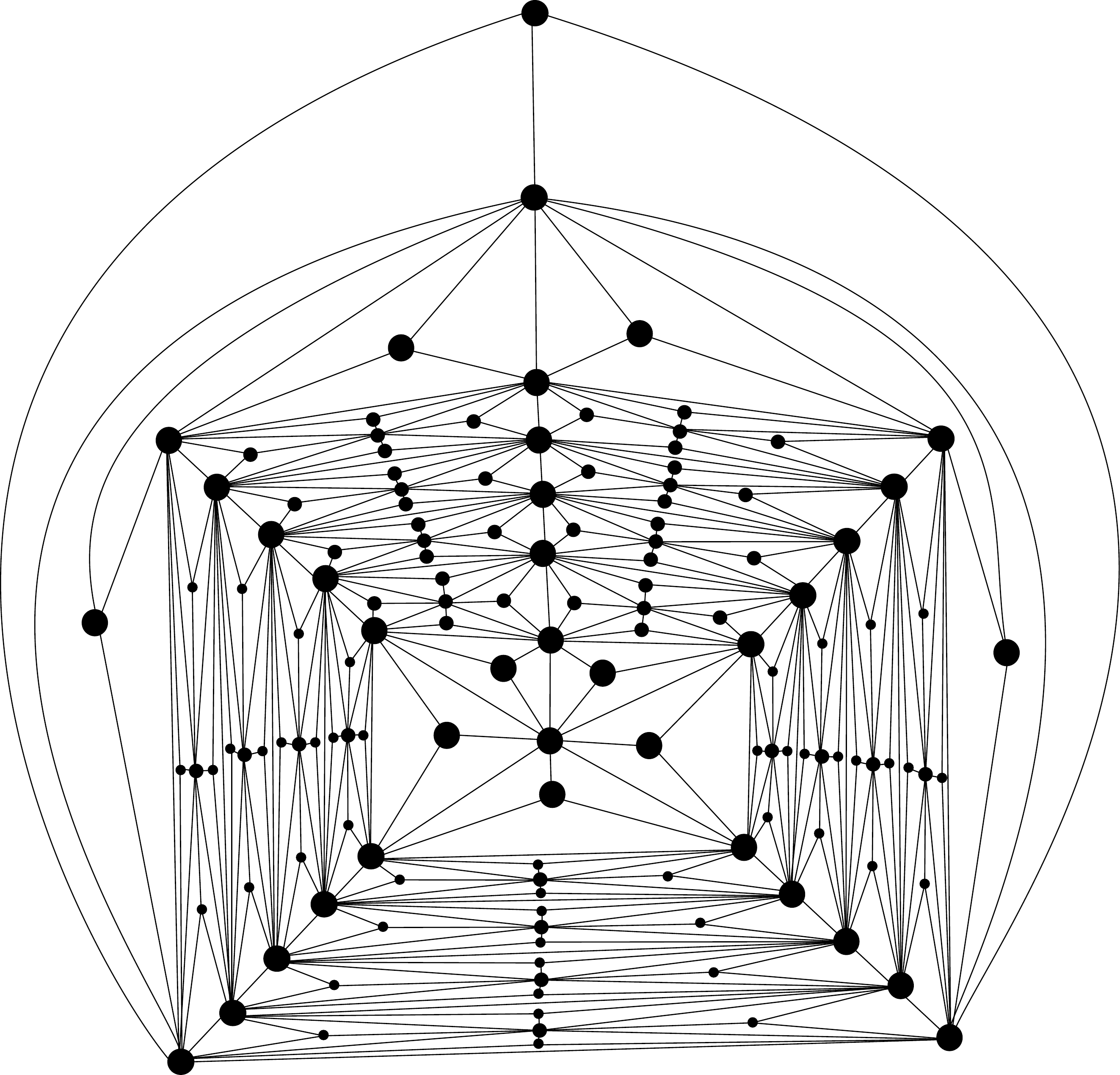}
\caption{An absolute planar retract of treewidth $\geq 5$.}
\label{fig:large-tw}
\end{figure}

\begin{theorem}\label{thm:main-planar}
Every connected planar graph is an isometric subgraph of some absolute planar retract. In particular, there are absolute retracts of planar graphs with arbitrarily large treewidth. 
\end{theorem}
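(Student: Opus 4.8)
The plan is to start from an arbitrary connected planar graph $G$ and to augment it, \emph{without ever decreasing distances between vertices of $G$}, into a planar triangulation $\hat G$ in which every facial triangle lies in a $K_4$; by Theorem~\ref{thm:absolute-planar-retract} such a $\hat G$ is an absolute retract of planar graphs, and $G$ is by construction an isometric subgraph of it. I would carry this out in two stages. First, I fix a plane embedding of $G$ and fill every face (including the outer one, seen as a disk on the sphere) with a triangulated disk, obtaining a maximal planar graph $G'\supseteq G$ in which $G$ is still isometric. Second, I stack a new vertex of degree three inside every face of $G'$, i.e.\ a vertex adjacent to the three corners of that triangle, obtaining $\hat G$; this is the classical operation producing planar $3$-trees, and, exactly as in the proof of Proposition~\ref{prop:apollonian}, it forces the $K_4$ condition of Theorem~\ref{thm:absolute-planar-retract}. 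For the treewidth consequence I would take for $G$ an $r\times r$ grid: since $G$ is a subgraph of $\hat G$ we get $tw(\hat G)\ge tw(G)=r$, which is unbounded, and the stacking vertices provide the advertised inner vertices of degree three (cf.\ Fig.~\ref{fig:large-tw}).

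The key point for the first stage is that augmenting inside faces can shorten a distance only through a controlled mechanism. Since the faces of the embedding pairwise meet only along vertices and edges of $G$, any path of $G'$ between two vertices $u,v\in V(G)$ decomposes into consecutive subpaths, each of which either runs along the boundary walks (hence stays inside $G$) or is an excursion that enters and leaves the interior of a single face $F$ at two of its boundary vertices $a,b$, and such an excursion has length at least the distance between $a$ and $b$ \emph{measured inside the filled disk of $F$}. Hence, if each face $F$ is filled so that the disk distance between any two of its boundary vertices is at least their distance in $G$, the triangle inequality yields $d_{G'}(u,v)\ge d_G(u,v)$, so $G$ is isometric in $G'$. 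To fill a face bounded by a cycle $C$ of length $\ell$ in this non-contracting way, I would use a nested-cycles (``onion'') triangulated disk: a stack of $N$ concentric copies of $C$, consecutive copies joined by a straight triangulated annulus, the innermost copy being finally capped off by a triangulated polygon. Taking $N$ large enough (say $N\ge \ell$) makes the disk so deep that any excursion reaching the central cap is longer than either boundary arc joining its endpoints, so that the disk distance between two boundary vertices equals their cyclic distance along $C$, which is at least their distance in $G$.

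In the second stage I write each face of $G'$ as a triangle $\{a,b,c\}$ and insert a new vertex $w$ adjacent to $a,b,c$; the three resulting faces $wab$, $wbc$, $wca$ are each contained in the $K_4$ on $\{w,a,b,c\}$, the fourth vertex being the remaining corner. After doing this to every face, every facial triangle of $\hat G$ belongs to a $K_4$ while $\hat G$ stays a triangulation. Moreover $w$ is adjacent only to $a,b,c$, so no path through $w$ beats the edge it would short-cut; stacking therefore preserves all distances and $G$ remains isometric in $\hat G$. Theorem~\ref{thm:absolute-planar-retract} then certifies that $\hat G$ is an absolute retract of planar graphs, which completes the proof.

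The main obstacle, and the only step requiring genuine work, is verifying that the onion filling is non-contracting: one must check that in each triangulated annulus the combined radial and angular displacement of any interior route is at least the cyclic boundary distance of its endpoints, so that no ``over-the-top'' path undercuts a boundary arc. I would set this up as an elementary layer/potential argument, assigning to each vertex its layer index together with an angular coordinate, and showing that every edge changes the angular coordinate by a bounded amount while each unit of radial progress inward must eventually be paid back on the way out. A secondary technicality is that when $G$ is not $2$-connected a face boundary is a closed walk rather than a simple cycle; I would dispose of this either by a routine adaptation of the onion filling to closed walks, or by first isometrically augmenting $G$ to a $2$-connected planar graph, adding Steiner paths of the correct length across the offending faces, before applying the filling step.
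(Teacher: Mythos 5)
Your proposal is correct and ends exactly where the paper does --- a plane triangulation in which every facial triangle lies in a $K_4$, certified by Theorem~\ref{thm:absolute-planar-retract}, with the grid giving unbounded treewidth --- and your final stacking stage is identical to the paper's last step. The route to the triangulation, however, is genuinely different. The paper arranges matters so that \emph{every} augmentation is isometric for the same one-line reason: each new vertex is attached to a set of old vertices that are pairwise at distance at most two. Concretely, it first makes $G$ biconnected by adding, for each cut vertex $u$, a single vertex adjacent to $u$ and to two consecutive neighbours of $u$ lying in different components of $G \setminus u$; it then iteratively shrinks every face of length at least six by inserting a contracted copy of the boundary cycle inside it; and only once all faces have length at most five (hence weak diameter at most two) does it cone each face and then stack. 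Your onion filling collapses the middle of this pipeline into one construction, at the price that its isometry rests on the layer/angular potential argument you identify as the real work: that argument is sound (annulus edges change the angular coordinate by at most one, so a path avoiding the cap has length at least the cyclic boundary distance, while a path reaching the cap has length at least $N \ge \ell$), but it is the technical heart of your proof and is only sketched, whereas each of the paper's steps verifies isometry immediately. The other spot where you are thinner than the paper is the non-$2$-connected case: the paper's cut-vertex gadget is explicit, while your ``Steiner paths of the correct length'' still needs the check that a path of length exactly $d_G(a,b)$ drawn inside a face creates no shortcut and strictly reduces the cut-vertex deficiencies. Neither point is a gap --- both are fillable --- so I would call this a correct proof by a different, somewhat more computation-heavy route. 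One cosmetic remark: stacking into every face of an arbitrary maximal planar graph does not in general yield a planar $3$-tree (that would require the base to be a $3$-tree), but since you only use the $K_4$ condition of Theorem~\ref{thm:absolute-planar-retract}, the aside is harmless.
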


\begin{proof}
Let $G=(V,E)$ be a connected planar graph. The result is trivial if $|V| \leq 2$. From now on, we assume $|V| \geq 3$. We first prove the existence of a biconnected planar graph $H_1$ in which $G$ isometrically embeds. If $G$ is already biconnected, then we set $H_1 := G$. Otherwise, let $u$ be a cut-vertex of $G$. In any fixed planar embedding of $G$, consider a clock-wise ordering of $N_G(u)$. There must be two consecutive neighbours $x,y$ in different connected components of $G \setminus u$. We add a new vertex $z \notin V$ such that $N(z) = \{u,x,y\}$. Since $x,y$ are consecutive in the clock-wise ordering of $N_G(u)$, the resulting graph $G' := G+u$ is still planar. It is also easy to check that $G$ isometrically embeds in $G'$ because $u,x,y$ are pairwise at distance at most two in $G$. Overall, by iterating this above operation, we obtain the desired biconnected planar graph $H_1$ in which $G$ isometrically embeds. 

Note that because $H_1$ is biconnected, in any plane embedding of $H_1$, the boundary of each face is a simple cycle~\cite[Proposition 4.2.6]{Die10}.
Then, we claim that $H_1$ is an isometric subgraph of some planar $H_2$ such that, in some plane embedding of $H_2$ all the faces have length at most five. In order to prove the claim, fix a plane embedding of $H_1$ and let the simple cycle $C$ be the boundary of some face of length $\geq 6$ (if no such face exists then, we are done by setting $H_2 := H_1$). Pick some path $[u,v,w]$ of length two on the cycle $C$. Let $C'$ be obtained from $C$ by contracting $[u,v,w]$ to a single vertex $x$. We add a copy of $C'$ in the face of which $C$ is a boundary, we add an edge between $x$ and all of $u,v,w$, and finally we add an edge between every $y \in C \setminus \{u,v,w\}$ and its copy in $C'$. Note that doing so, we replace the face bounded by $C$ by a new face bounded by $C'$ and by some new triangular and quadrangular faces. Let $H_1'$ be the resulting planar graph. We prove as an intermediate subclaim that $H_1$ isometrically embeds in $H_1'$. For that, fix a shortest-path in $H_1'$ between some two vertices $a,b \in V(H_1)$. Assume there is a subpath $P'=[c_1',\ldots,c_p']$ fully in $C'$ (otherwise, we are done). There is a natural mapping between the latter and the path $P$ of $C$ that is obtained by uncontracting the path $[u,v,w]$. Note that the length of $P$ is at most the length of $P'$ plus two. Furthermore, let $x,y \in C$ be the respective neighbours of $c_1',c_p'$ on the shortest $ab$-path. By construction, $x,y \in V(P)$. Hence, we may replace $[x,P',y]$ by $P$ on the shortest $ab$-path. By repeating this operation until there is no more vertex of $C'$ on the shortest $ab$-path, we prove as claimed that $H_1$ isometrically embeds in $H_1'$. Overall, we repeat the above operations until there is no more face of length $\geq 6$ in the plane embedding. Doing so, we obtain $H_2$.

We further claim that $H_2$ is an isometric subgraph of some plane triangulation $H_3$. This is easily achieved by adding, in the middle of each face, a new vertex adjacent to all the vertices of the boundary. Note that $H_2$ isometrically embeds in $H_3$ because all its faces have length at most five, and so, weak diameter at most two. 

Finally, let us prove that every plane triangulation $H_3$ isometrically embeds in some absolute planar retract $H_4$. The construction is the same as in the previous step, namely, we add in the middle of each (triangular) face a new vertex adjacent to all the vertices of the boundary. Doing so, since each face of $H_4$ shares exactly one edge with a triangular face of $H_3$, we satisfy the following property: any triangle bounding a face of $H_4$ belongs to a subgraph isomorphic to $K_4$. Furthermore, $H_4$ is a plane triangulation of order $\geq |V| \geq 3$, and so, it is maximal planar~\cite[Propositions 4.2.8 and 4.4.1]{Die10}. By Theorem~\ref{thm:absolute-planar-retract}, $H_4$ is an absolute planar retract.
\end{proof}

We stress that the proof of Theorem~\ref{thm:main-planar} is constructive, and that it leads to a polynomial-time algorithm in order to construct an absolute planar retract in which the input planar graph $G$ isometrically embeds. In contrast to our result, the smallest Helly graph in which a graph $G$ isometrically embeds may be exponential in its size~\cite{GDL20}.

\bibliographystyle{abbrv}
\bibliography{biblio}

\end{document}